\title{Blow-up in manifolds with generalized corners}
\author{Chris Kottke}
\date{\today}
\begin{document}
\maketitle
\begin{abstract}
We construct a functor from the category of manifolds with generalized corners
to the category of complexes of toric monoids, and for every `refinement' of
the complex associated to a manifold, we show there is a unique `blow-up',
i.e., a new manifold mapping to the original one, which satisfies a universal
property and whose complex realizes the refinement.
This was inspired in part by the work of Gillam and Molcho, though we work with
manifolds with generalized corners, as developed by Joyce, which have embedded
boundary faces, for which the appropriate objects (i.e., complexes of monoids)
are simpler than they would be otherwise (i.e., monoidal spaces in the sense of
Kato). 
\end{abstract}

\section{Introduction} \label{S:intro}
The aim of this paper is to provide a self-contained, differential geometric
treatment of boundary blow-up in the category of manifolds with generalized
corners. 
We use the term `blow-up' in a general sense, as described below.

A {\em manifold with generalized corners}, as developed by Joyce \cite{joyce},
is a space $M$ which is locally modeled on the spaces $X_P = \Hom(P; \bbR_+)$
where the $P$ are {\em toric monoids}. 
Such a space has an interior, $M^\circ$, which is a smooth manifold without
boundary, and boundary faces which are themselves manifolds with generalized
corners.
We construct a functor 
\[
	M \mapsto \cP_M,
	\quad (f : M \to N) \mapsto (f_\natural : \cP_M \to \cP_N)
\]
from the category of manifolds with generalized corners and interior b-maps to
the category of {\em monoidal complexes} \cite{KM}.  These are roughly
analogous to simplicial complexes, with toric monoids instead of simplices, and
$\cP_M$ associates a toric monoid to each boundary face of $M$.
A {\em refinement} of the monoidal complex $\cP_M$ is a morphism $\cR \to
\cP_M$ which amounts to giving a consistent subdivision of the monoids in
$\cP_M$ by toric submonoids. 
\begin{thmstar}[Thm.\ \ref{T:main_thm}, \ref{T:blow-up_pullback}]
\mbox{}
\begin{enumerate}
[{\normalfont (i)}]
\item \label{I:blow-up}
For a manifold with generalized corners $M$ and a refinement $\psi :\cR \to
\cP_M$, there exists a unique (up to diffeomorphism) {\bf blow-up}, i.e., a
manifold with generalized corners $[M; \cR]$ and a {\bf blow-down} map, $\beta
: [M; \cR] \to M$, such that $\beta : [M; \cR]^\circ \to M^\circ$ is a
diffeomorphism of interiors and
\[
	\cP_{[M; \cR]} \cong \cR,
	\quad \beta_\natural \cong \psi : \cR \to \cP_M.
\]
\item \label{I:factoring}
The blow-up satisfies the following universal property:
If the morphism $f_\natural : \cP_N \to \cP_M$ of monoidal complexes associated
to an interior b-map $f : N \to M$ factors through $\psi : \cR \to \cP_M$, then
$f$ factors through a unique interior b-map 
\[
	\wt f : N \to [M; \cR].
\]
\item \label{I:pullback}
If $f : N \to M$ is any interior b-map, then 
the pull back of $[M; \cR]$ to $N$ is a blow-up:
\[
	N\times_M [M; \cR] \cong [N; \cR'],
	\quad \cR' = \cP_N \times_{\cP_M} \cR.
\]
\end{enumerate}
\end{thmstar}

Blow-up (in this generalized sense) in the category, $\MC$, of manifolds with
(ordinary) corners was developed in \cite{KM}, a principal result of which was
the resolution of certain `binomial subvarieties' inside a manifold with
corners which arise when taking fiber products, among other situations.
Following that work, Joyce in \cite{joyce} developed the category, $\MGC$, of
manifolds with generalized corners, giving an intrinsic differential
topological characterization of the natural class of objects exemplified by
binomial subvarieties, and showed, among other results, that this category is
closed under suitably transverse fiber products. 

There is also an algebro-geometric theory \cite{GM} due to Gillam and Molcho, 
in which manifolds with corners arise as a natural subcategory of the category,
$\PLDS$, of `positive log differentiable spaces'.  In this formulation, the
`b-' objects (i.e., b-maps, b-tangent bundles, b-differentials and so on)
associated to manifolds with corners as defined by Melrose \cite{melroseaps}
are the natural ones corresponding to a `logarithmic structure' on such a
space, in the sense of \cite{KKMS, kato}. In addition to $\MC$, the category
$\PLDS$ includes $\MGC$ as a subcategory in addition to more general spaces.
Gillam and Molcho extend Kato's resolution of toric singularities \cite{kato}
to this category.
In this formulation, each space $M$ is associated with a 
`monoidal space' $\ol M$, which is a sheaf of toric monoids over $M$. To each
suitable resolution $F \to \ol M$, they prove that there exists an essentially
unique universal smooth space $N \to M$ with $\ol N \to \ol M$ factoring
through $F$.

While Gillam and Molcho's theory is very general, it is also quite abstract and
heavily reliant on high level concepts from algebraic geometry. For this
reason, we present here a short, self-contained, elementary treatment of
blow-up in the category $\MGC$. 
In contrast to Joyce, we require as part of the definition of a manifold with
generalized corners that its boundary faces are embedded. Under this
assumption, the 
monoidal space $\ol M$ may be replaced by the monoidal complex $\cP_M$, a
simpler, essentially combinatorial object carrying the same information.  (See
the discussion in \S\ref{S:commentary} for more on this point.)

\medskip Section \ref{S:background} summarizes the necessary background
material. 
We review toric monoids in \S\ref{S:monoids}, and then devote some detailed
discussion to the model spaces $X_P$ in \S\ref{S:models} before reviewing
manifolds (with generalized corners) in \S\ref{S:MGC}. Most of the ideas in
\S\ref{S:models} and \S\ref{S:MGC} are due to Joyce, though some of our
terminology and notation differs from \cite{joyce}.

We emphasize the structure of a manifold $M$ as a {\em stratified space}, with
strata given by the interiors, $F^\circ$, of boundary faces $F \subset M$. Each
such stratum sits locally inside $M$ as the subset $\set{\star}\times \bbR^l
\subset X_{W(F)} \times \bbR^l$ where $W(F)$ is a fixed monoid (the `conormal
monoid'), and $\star$ is a canonical base point in the model space $X_{W(F)}$,
which forms the fiber of the stratum.

The differential structure of $M$ is encoded by the {\em b-tangent bundle} $\bT
M \to M$, a real vector bundle of rank $\dim(M)$. Each boundary face $F \subset
M$ supports a rank $\codim(F)$ {\em b-normal} subbundle $\bN F \subset \bT M
\to F$, with an underlying trivial bundle $\bM F\to F$ of monoids; in fact $\bN
F = \bM F \otimes_\bbN \bbR$ and $\bM F \cong F \times W(F)^\vee$, where
$W(F)^\vee$ is the `normal monoid' dual to the conormal monoid above.
If $G$ and $F$ are boundary faces with $G \subset F$, then $W(F)^\vee$ identifies
naturally with a face of the monoid $W(G)^\vee$.
An interior b-map $f : N \to M$ gives rise to a map $\bd f_\ast : \bT N \to \bT
M$ of vector bundles which respects these structures; in particular $f$
induces compatible homomorphisms from the normal monoids associated with the
faces of $N$ to those of $M$.

The collections $\cP_M = \set{W(F)^\vee : F \subset M}$  of normal monoids and
these induced homomorphisms are the motivating examples of {\em monoidal
complexes} and their morphisms, which are reviewed in \S\ref{S:mon_cplx}, along
with the notion of refinement.
Finally, in Section~\ref{S:blow-up}, we develop the theory of blow-up, first
for the model spaces in \S\ref{S:blow-up_models} and then for manifolds in
\S\ref{S:global_blow-up}, where we prove parts \eqref{I:blow-up} and
\eqref{I:factoring} of the above theorem. We use Joyce's result on fiber
products in \S\ref{S:blow-up_pullback} to prove part \eqref{I:pullback}, 
and make some concluding remarks in \S\ref{S:commentary}.

\begin{ack}
The author would like to thank Dominic Joyce for his insightful comments 
on an earlier draft of the manuscript.
\end{ack}

\section{Background} \label{S:background}
\subsection{Monoids} \label{S:monoids}
For a general discussion of monoids, see \cite{ogus} or \cite{GM}. We review
here the basic concepts that will be used below.

A {\bf monoid} is a set $P = (P,+,0)$ which is closed under an associative,
commutative, unital binary operation, which we write additively unless
otherwise specified.
A monoid {\bf homomorphism} $f: P \to Q$ is a map such that $f(p+p') = f(p) +
f(p')$ and $f(0) = 0$.
Every abelian group is a monoid, and a monoid homomorphism between groups is
automatically a homomorphism of groups.

A {\bf submonoid} $Q \subset P$ is a subset containing $0$ which is closed
under the binary operation (and thus is also a monoid).
For each submonoid $Q \subset P$, there is a {\bf quotient} monoid $P/Q$ and a
homomorphism
\[
	\pi : P \to P/Q
\]
with the universal property that any homomorphism $h : P \to R$ for which $h(Q)
= \set 0$ factors through a unique homomorphism $\wt h : P/Q \to R$, i.e., $h =
\wt h \circ \pi$. $P/Q$ may be realized as the monoid of equivalence classes
generated by the equivalence relation $p \sim p' \iff p + q = p' + q'$ for some
$q,q' \in Q$.

A {\bf unit} in $P$ is an element $p$ with a (necessarily unique) inverse $q$
such that $p + q = 0$; the units form a submonoid (which is an abelian group)
which we denote by $P^\times$.
%
If $P^\times = \set 0$ then we say $P$ is {\bf sharp}. For any $P$, the monoid
\[
	P^\sharp = P/P^\times
\]
is sharp; it is called the {\bf sharpening} of $P$.

\subsubsection{Localization} \label{S:localization}
To each monoid $P$ we associate an abelian group $P^\gp$ and a homomorphism
\[
	\iota : P \to P^\gp
\]
where $P^\gp$ satisfies the universal property that any homomorphism $h : P \to
G$ to an abelian group factors through a unique homomorphism $\wt h :
P^\gp \to G$, i.e., $h = \wt h \circ \iota$. It follows from the universal
property that $P^\gp$ and $\iota$ are unique up to unique isomorphism.
The {\bf dimension} of $P$ is
\[
	\dim(P) = \rank(P^\gp).
\]

More generally, given any submonoid $S \subset P$, the {\bf localization},
$S^{-1} P$,  of $P$ at $S$ is a monoid with a homomorphism
\[
	\iota : P \to S^{-1}P
\]
such that $\iota(s)$ is a unit for each $s \in S$.
The localization has the the universal property that any homomorphism $h : P\to
Q$ of monoids in which $h(S) \subset Q^\times$ factors through a unique
homomorphism $\wt h : S^{-1} P \to Q$.
In the special case $S = P$, we have
\[
	P^\gp = P^{-1}P.
\]
The localization $S^{-1}P$ may be realized as the set of equivalence
classes of pairs $[p,s]$ with respect to the equivalence relation $(p,s) \sim
(p',s') \iff p + s' + q = p' + s + q$ for some $q \in Q$, with $\iota(p) =
[p,0]$.

\subsubsection{Toric monoids} \label{S:toric}
We say $P$ is {\bf toric} if it is:
\begin{enumerate}
[{\normalfont (T1)}]
\item \label{I:fin_gen}
{\bf finitely generated}, meaning there is a surjective homomorphism $\bbN^n \to P$
for some $n \in \bbN$ (and then $P^\gp$ is a finitely generated abelian group),
\item \label{I:integral}
{\bf integral}, meaning that if $p + r = q + r$ in $P$, then $p = q$; equivalently, the map
$\iota : P \to P^\gp$ is injective,
\item \label{I:tor_free}
{\bf torsion free}, meaning that $np = p + \cdots + p = 0$ implies $p = 0$;
equivalently, $P^\gp$ is a torsion free abelian group, and
\item \label{I:saturated}
{\bf saturated}, meaning that if $p \in P^\gp$ with $np \in P$ for some $n \in
\bbN$, then $p \in P$.
\end{enumerate}
In particular, if $P$ is toric then $P^\gp$ is a {\em lattice} (finitely generated, torsion free
abelian group).

\medskip
\noindent
From this point on, {\em monoid} will mean {\em toric monoid} unless otherwise
specified.
\medskip

\begin{rmk}
Toric monoids as defined here correspond to what Joyce calls `weakly toric' monoids. Joyce
reserves the term `toric' for a sharp toric monoid.
\end{rmk}

As an alternative to the algebraic conditions
(T\ref{I:fin_gen})--(T\ref{I:saturated}), there is a more geometric
characterization of toric monoids which makes them easier to visualize.

\begin{prop}[\cite{joyce}, Prop.\ 3.8]
A toric monoid is equivalent to the intersection of a finitely generated
lattice $L$ with a cone $C \subset V$, where $V = L\otimes_\bbZ \bbR \supset L$
is the associated real vector space, and $C$ is convex, rational and
polyhedral, (i.e., $C$ is the convex hull of a finite number of rays generated
by lattice elements). The monoid $P = C \cap L$ is sharp if and only if $C$
contains no non-trivial subspace.
\label{P:monoid_as_cone}
\end{prop}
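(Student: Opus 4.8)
The plan is to prove Proposition~\ref{P:monoid_as_cone} by establishing the two directions of the equivalence separately, in each case checking the four defining conditions (T\ref{I:fin_gen})--(T\ref{I:saturated}) against the geometry of the cone.

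First I would show that a monoid of the form $P = C \cap L$, with $L$ a lattice and $C \subset V = L \otimes_\bbZ \bbR$ a rational polyhedral convex cone, is toric. Integrality (T\ref{I:integral}) and torsion-freeness (T\ref{I:tor_free}) are immediate since $P$ is a submonoid of the group $L$, which is torsion free, so $P^\gp \subset L$ is a sublattice and $\iota : P \to P^\gp$ is injective. Saturation (T\ref{I:saturated}) is also essentially immediate: if $p \in P^\gp \subset L$ and $np \in P = C \cap L$ for some $n \geq 1$, then $np \in C$, and since $C$ is a cone (closed under multiplication by nonnegative reals), $p = \tfrac 1n (np) \in C$, hence $p \in C \cap L = P$. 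The only substantive point is finite generation (T\ref{I:fin_gen}), which is Gordan's lemma: one writes $C$ as the nonnegative span of finitely many lattice vectors $v_1, \dots, v_k$, considers the (bounded) region $K = \set{\sum_i t_i v_i : 0 \leq t_i \leq 1}$, notes that $K \cap L$ is finite since $K$ is compact and $L$ discrete, and checks that $K \cap L$ generates $P$ over $\bbN$ by the usual ``fractional part'' argument (any $p \in P$ is an $\bbN$-combination of the $v_i$ plus an element of $K \cap L$). I would cite this as standard or sketch it in a line.

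Next I would prove the converse: an abstract toric monoid $P$ is isomorphic to some $C \cap L$. Here the natural choice is $L = P^\gp$ and $C = \mathrm{cone}(P) := \set{\sum_i t_i p_i : t_i \in \bbR_+,\ p_i \in P} \subset V := P^\gp \otimes_\bbZ \bbR$, the convex cone generated by the image of $P$ in $V$. By (T\ref{I:fin_gen}), $P$ has finitely many generators, so $C$ is the convex hull of finitely many rays through lattice points, i.e., rational polyhedral; and $C$ is convex by construction. By (T\ref{I:integral}) we may regard $P \subseteq P^\gp = L$ directly, so certainly $P \subseteq C \cap L$. The content is the reverse inclusion $C \cap L \subseteq P$: given $x \in L$ with $x = \sum_i t_i p_i$, $t_i \geq 0$, $p_i \in P$, I need $x \in P$. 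Clearing denominators in a rational approximation and using that $C$ is already cut out by finitely many rational halfspaces (apply the previous paragraph's Gordan-type analysis to the generating data), one finds $n \geq 1$ with $nx \in P$; then saturation (T\ref{I:saturated}), together with torsion-freeness (T\ref{I:tor_free}) to ensure $nx = 0 \Rightarrow x = 0$ when needed, gives $x \in P$. This is the step I expect to be the main obstacle, as it requires knowing that the convex cone generated by a finitely generated monoid is again finitely generated as a cone (equivalently, is closed and polyhedral) — a version of the Minkowski--Weyl / Farkas theorem — so that ``$x \in C$'' can be certified by finitely many integral linear inequalities and the passage from $x \in C \cap L$ to $nx \in P$ becomes rational and hence effective.

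Finally I would address the sharpness claim. The units $P^\times$ form the largest subgroup contained in $P$; under the identification $P = C \cap L$ this is $P \cap (-P) = (C \cap L) \cap (-C \cap L) = (C \cap -C) \cap L$, and $C \cap -C$ is precisely the largest linear subspace contained in $C$ (the \emph{lineality space} of $C$). So $P^\times = \set 0$ exactly when $C \cap -C = \set 0$, i.e., when $C$ contains no nontrivial subspace, which is the asserted criterion. (One uses that a subspace $W \subseteq C \cap -C$ is spanned by lattice points, since $C$ is rational, so $W \cap L \neq \set 0$ whenever $W \neq \set 0$; this uses torsion-freeness of $L$.) Throughout, the only slightly delicate bookkeeping is to make sure the lattice $L$ can be taken to be $P^\gp$ without loss of generality, i.e., that one may always shrink the ambient lattice to the subgroup generated by $P$, which follows from (T\ref{I:integral}) and (T\ref{I:tor_free}).
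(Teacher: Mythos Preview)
The paper does not actually prove this proposition: it is stated with a citation to \cite{joyce}, Prop.~3.8, and no proof is given in the paper itself. So there is no ``paper's own proof'' to compare your proposal against.

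That said, your outline is essentially the standard argument and is correct in its broad strokes. The direction $C \cap L \Rightarrow$ toric is handled cleanly, with Gordan's lemma as the only nontrivial input for (T\ref{I:fin_gen}). For the converse, you correctly identify the real content as the inclusion $C \cap L \subseteq P$, and your plan (use that a rational polyhedral cone is cut out by finitely many rational halfspaces, so a lattice point of $C$ satisfies a rational linear system with a real nonnegative solution, hence a rational one, hence some $nx \in P$, hence $x \in P$ by saturation) is the right one; the Minkowski--Weyl/Farkas input you flag is exactly what is needed. The sharpness argument via the lineality space $C \cap (-C)$ is also correct, including the observation that rationality of $C$ forces any nonzero lineality space to meet $L$ nontrivially.

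One small tightening: in the converse direction, rather than ``clearing denominators in a rational approximation,'' it is cleaner to say that the linear system $\sum_i t_i p_i = x$, $t_i \geq 0$, has rational coefficients (the $p_i$ and $x$ are lattice points), so the existence of a real solution implies the existence of a rational one; this is the precise statement you want from linear programming over $\bbQ$.
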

In the setting of Proposition~\ref{P:monoid_as_cone}, $P^\gp$ is nothing other
than the lattice $L$ (assuming that the cone $C$ does not lie in any proper
subspace, in which case we can pass to the corresponding sublattice).
Likewise, for each  $S\subset P$, $S^{-1}P$ may be realized as the submonoid of
$L$ generated by $P$ and the minimal sublattice containing $S$.

\subsubsection{Faces and ideals} \label{S:faces}
An {\bf ideal} of $P$ is a proper subset $I \subsetneq P$ such that $i + p \in I$
for all $i \in I$, $p \in P$.
An ideal $I$ is {\bf prime} if $p + q \in I$ implies that either $p \in I$ or $q \in I$.

A {\bf face} of $P$ is a submonoid $S \subset P$ whose complement $P \setminus
S$ is a prime ideal; thus $S$ has the property that if $p + q \notin S$ then
either $p \notin S$ or $q \notin S$.
In the setting of Proposition~\ref{P:monoid_as_cone}, faces of $P = C \cap L$
are precisely the toric monoids given by the intersections $D \cap L$ where $D$
is a face, in the obvious sense, of the polyhedral cone $C$.
Faces (resp.\ prime ideals) are closed under intersection (resp.\ union), and
there is a unique minimal face $P^\times$ (corresponding to the unique maximal
ideal $P \setminus P^\times$) and a unique maximal face $P$ (corresponding to
the minimal prime ideal $\emptyset$).
We write
\[
	S \leq P
\]
for the inclusion of a face, and to denote the (partial) order relation on
faces determined by inclusion.  Note that $T \leq S$ as faces of $P$ if and
only if $T$ is a face of $S$.

The inclusion $S \to P$ generates an exact sequence
\begin{equation}
	S^\gp \to P^\gp \to (P/S)^\gp \cong P^\gp/S^\gp
	\label{E:face_ex_seq}
\end{equation}
of free abelian groups, and the {\bf codimension} of $S$ is
\[
	\codim(S) = \rank(P^\gp/S^\gp) = \dim(P) - \dim(S).
\]

\begin{prop}
For each face $S \leq P$, the exact sequence
\eqref{E:face_ex_seq}
splits (non-canonically), giving an isomorphism
\begin{equation}
	S^{-1}P \cong P/S \times S^\gp.
	\label{E:splitting}
\end{equation}
In particular, $P \cong P^\sharp \times P^\times$.
\label{P:split_ex_seq}
\end{prop}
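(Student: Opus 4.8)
The plan is to prove the splitting by exploiting the geometric picture from Proposition~\ref{P:monoid_as_cone}. Realize $P = C \cap L$ with $L = P^\gp$, and let $S = D \cap L$ for a face $D$ of the cone $C$. The key point is that the exact sequence \eqref{E:face_ex_seq} of free abelian groups, $0 \to S^\gp \to P^\gp \to P^\gp/S^\gp \to 0$, splits because $P^\gp/S^\gp$ is free; pick any splitting homomorphism $\sigma : P^\gp/S^\gp \to P^\gp$, which gives $P^\gp \cong S^\gp \times (P^\gp/S^\gp)$. Under this identification I want to show that $S^{-1}P$, realized as the submonoid of $L$ generated by $P$ together with the sublattice $S^\gp$ (as noted after Proposition~\ref{P:monoid_as_cone}), corresponds exactly to $P/S \times S^\gp$, where $P/S$ is identified with its image in $P^\gp/S^\gp$.

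The main steps would be as follows. First I would verify that the natural map $P \to (P/S)^\gp = P^\gp/S^\gp$ has image equal to $P/S$ itself (i.e.\ $P/S$ is already saturated/integral and sits inside its groupification), which follows since $P/S \cong C/\mathrm{span}(D) \cap L/S^\gp$ is again toric — the quotient cone $\bar C = \pi(C)$ in $V/\mathrm{span}(D)$ is convex, rational, polyhedral, and $\pi(C) \cap (L/S^\gp) = \pi(P)$ because $S = D\cap L$ is a face. Second, I would check that $S^{-1}P = P + S^\gp$ inside $L$ projects \emph{onto} $P/S$ under $\pi : L \to L/S^\gp$ with kernel exactly $S^\gp$: surjectivity is clear, and the kernel computation amounts to showing that if $p + s \in S^\gp$ for $p \in P$, $s \in S^\gp$, then $p \in S^\gp$, hence $p \in S$ (using saturation of $S$ as a face), so $p + s \in S^\gp$ lies in the $S^\gp$ factor. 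Third, combining these with the chosen group-level splitting $\sigma$, I get a monoid isomorphism $S^{-1}P \xrightarrow{\ (\pi,\ \mathrm{pr}_{S^\gp})\ } P/S \times S^\gp$ with inverse $(q, s) \mapsto \sigma(q) + s$, where one checks $\sigma(q) \in S^{-1}P$ for $q \in P/S$ by lifting $q$ to some $p \in P$ and observing $\sigma(q) - p \in S^\gp$. The final claim $P \cong P^\sharp \times P^\times$ is the special case $S = P^\times$: then $P/S = P/P^\times = P^\sharp$ and $S^{-1}P = (P^\times)^{-1}P = P$ since every element of $P^\times$ is already a unit, so \eqref{E:splitting} reads $P \cong P^\sharp \times P^\times$ directly.

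The main obstacle I anticipate is the kernel computation in the second step — i.e., verifying that $(P + S^\gp) \cap S^\gp$ is no larger than $S^\gp$, equivalently that $p \in P$ with $p \in S^\gp$ forces $p \in S$. This is where the face hypothesis on $S$ (not just submonoid) is essential: geometrically, $p \in S^\gp = \mathrm{span}(D) \cap L$ and $p \in C$ together with $D$ being a face of $C$ gives $p \in D$, hence $p \in D \cap L = S$. Algebraically one argues: $p \in S^\gp$ means $p = s_1 - s_2$ with $s_i \in S$, so $p + s_2 = s_1 \in S$; since $S$ is a face and $p + s_2 \in S$, and $p, s_2 \in P$, we get $p \in S$. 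Once that is in hand, everything else is formal manipulation with the universal properties of localization and quotient, together with the purely homological fact that a short exact sequence of free abelian groups splits.
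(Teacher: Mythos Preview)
The paper states Proposition~\ref{P:split_ex_seq} without proof, treating it as a standard background fact about toric monoids, so there is no argument to compare against. Your proof is correct: the essential ingredients are exactly (i) that the short exact sequence of lattices splits because $P^\gp/S^\gp$ is free, and (ii) that $P \cap S^\gp = S$, which you correctly deduce from the face condition (if $p + s_2 \in S$ with $p,s_2 \in P$ then $p \in S$, since $P\setminus S$ is an ideal). With those in hand your map $S^{-1}P \to P/S \times S^\gp$ is the restriction of the group-level isomorphism and your inverse $(q,s) \mapsto \sigma(q)+s$ lands in $P + S^\gp = S^{-1}P$ as you verify. The special case $S = P^\times$ is handled correctly. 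The only place to be slightly more careful is the assertion that $\pi(C) \cap (L/S^\gp) = \pi(P)$, i.e.\ that $P/S$ is again toric with groupification $P^\gp/S^\gp$; but the paper already asserts in \eqref{E:face_ex_seq} that $P^\gp/S^\gp$ is free and uses $P/S$ as a toric monoid throughout, so this is within the ambient assumptions.
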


Since every face $S \leq P$ contains the minimal face $P^\times$, each quotient
$P/S$ is a sharp monoid.

Though it is not standard, we will make use of the following notion in
\S\ref{S:mon_cplx}.
The {\bf interior} of a monoid $P$ is the complement
\[
	P^\circ = \bigcap_{S \lneq P} P \setminus S
\]
of all the proper faces of $P$. It is an ideal, but generally not a prime
ideal.
The interiors of the faces of $P$ determine a partition $P = \bigsqcup_{S \leq
P} S^\circ$.
A homomorphism $f : Q \to P$ is an {\bf interior homomorphism} if
$f(P^\circ) \subset Q^\circ$, i.e., if $f$ does not map $Q$ into any proper
face of $P$.

\subsubsection{Duality} \label{S:duality}
The {\bf dual} of a monoid $P$ is the monoid
\[
	P^\vee = \Hom(P; \bbN).
\]
Since units are preserved by homomorphisms and $\bbN^\times = \set 0$, there is
a natural isomorphism $P^\vee \cong (P^\sharp)^\vee$. Likewise $P^\vee$ is
sharp.
Evaluation $p \mapsto \ev_p \in \Hom(P^\vee; \bbN)$ determines a natural
homomorphism $P \to (P^\vee)^\vee$ with kernel $P^\times$, giving an
isomorphism
\begin{equation}
	P^\sharp \cong (P^\vee)^\vee.
	\label{E:double_dual}
\end{equation}

For each face $S \leq P$, define its {\bf annihilator} by
\[
	S^\perp = \bigcap_{s \in S} \ev_s^{-1}(0)
	= \set{p \in P^\vee : p(s) = 0 \ \forall\,s \in S}
	\leq P^\vee.
\]
This is easily seen to be a face of $P^\vee$ (the subsets $\ev_s^{-1}(0)$ are
prime ideals), and the association $S \mapsto S^\perp$ gives a codimension- and
inclusion-reversing bijection between faces of $P$ and faces of $P^\vee$.
With respect to \eqref{E:double_dual}, we have $S^\sharp \cong
(S^\perp)^\perp$.  There is also a natural isomorphism $(P/S)^\vee \cong
S^\perp$, which is to say that we have dual exact sequences of monoids
\[
\begin{tikzcd}[column sep=small, row sep=tiny]
0 \ar{r} & S \ar{r} & P \ar{r} & P/S \ar{r}& 0 &\text{and}
\\ 0 & \ar{l}P^\vee/S^\perp & \ar{l} P^\vee & \ar{l} S^\perp & \ar{l} 0
\end{tikzcd}
\]
i.e., the second is dual to the first, and vice versa if $P$ is sharp.

\begin{lem}
If $S \leq P$ and $p \in P\setminus S$, then there exists $q \in S^\perp$ such that $q(p) \neq 0$.
\label{L:nontriv_dual}
\end{lem}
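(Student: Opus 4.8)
The plan is to pass to the geometric picture of Proposition~\ref{P:monoid_as_cone}. Write $P = C \cap L$ with $L = P^\gp$ the ambient lattice (we may assume $C$ spans $V = L \otimes_\bbZ \bbR$, passing to a sublattice otherwise) and $C \subset V$ a rational polyhedral cone. Under this correspondence the face $S \leq P$ is of the form $S = D \cap L$ for a face $D$ of the cone $C$. Since $p \in P \subset L$ already lies in the lattice, the hypothesis $p \notin S = D \cap L$ is equivalent to $p \in C \setminus D$.

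The key input I would invoke is the classical fact that every face of a rational polyhedral cone is \emph{exposed}: there is a linear functional $\lambda \in V^\ast$ with $\lambda \geq 0$ on $C$ and $D = C \cap \ker \lambda$; moreover, since $C$ is rational, $\lambda$ may be chosen rational, and after clearing denominators, integral, i.e.\ $\lambda$ restricts to an element of $\Hom(L;\bbZ)$. Restricting $\lambda$ to $P = C \cap L$ then gives a homomorphism $q := \lambda|_P : P \to \bbN$ (it takes nonnegative integer values on $P$ because $\lambda \geq 0$ on $C$), so $q \in P^\vee$.

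It remains to check the two conclusions, both of which are immediate. First, $S = D \cap L \subseteq D \subseteq \ker\lambda$, so $q$ annihilates $S$ and hence $q \in S^\perp$. Second, $p \in C \setminus D = C \setminus (C \cap \ker\lambda)$, so $\lambda(p) \neq 0$ (indeed $\lambda(p) > 0$, since $p \in C$), whence $q(p) \neq 0$, as desired.

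The only genuine content is the exposedness of faces of rational polyhedral cones together with the integrality of the separating functional; this is standard convex geometry (concretely, the dual cone $C^\vee \subset V^\ast$ of functionals nonnegative on $C$ is rational, and the relative interior of its face dual to $D$ contains a rational --- hence, after scaling, integral --- functional $\lambda$ with $\ker\lambda \cap C = D$). I expect this to be the main, though routine, obstacle. An alternative, more algebraic route avoids cones: using the isomorphism $(P/S)^\vee \cong S^\perp$ noted above, it suffices to find $\phi \in (P/S)^\vee$ with $\phi(\bar p) \neq 0$, where $\bar p$ denotes the image of $p$ in the sharp monoid $P/S$, and $\bar p \neq 0$ precisely because $S$ is a face. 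This reduces the lemma to the assertion that a nonzero element of a sharp toric monoid is detected by some $\bbN$-valued homomorphism, which again follows from Proposition~\ref{P:monoid_as_cone}.
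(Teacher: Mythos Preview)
Your argument is correct, but the paper proceeds differently. The paper's proof is purely algebraic and very short: for each $q \in S^\perp$ the zero set $q^{-1}(0) \subset P$ is a face containing $S$, and the intersection $\bigcap_{q \in S^\perp} q^{-1}(0)$ is identified (via $p \mapsto \ev_p$) with the preimage of $(S^\perp)^\perp$ in $P$, which equals $S$ by the double annihilator identity $(S^\perp)^\perp = S^\sharp$ established just before the lemma. Hence $p \notin S$ forces $q(p) \neq 0$ for some $q \in S^\perp$. Your approach instead unpacks the geometric content, invoking the exposedness of faces of a rational polyhedral cone and the integrality of a separating supporting functional; this is of course exactly what underlies the double annihilator identity, but the paper's route avoids re-deriving it by using that identity as a black box. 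Your sketched alternative via $(P/S)^\vee \cong S^\perp$ is closer in spirit to the paper's argument, since both reduce to detecting a nonzero element by an $\bbN$-valued homomorphism using duality already on record.
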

\begin{proof}
For each $s \in S^\perp$, $s^{-1}(0)$ is a prime ideal, which is to say the
complement of some face $T$ with $S\leq T$ by necessity. Then
$(S^\perp)^{-1}(0) = \bigcup_{s\in S^\perp} s^{-1}(0)$ is the complement of a
face which must be $S$ by the property $(S^\perp)^\perp = S^\sharp$. By
hypothesis $p \notin S = (S^\perp)^{-1}(0)$, so there is some $q \in S^\perp$
with $q(p) \neq 0$.
\end{proof}

\subsubsection{Examples} \label{S:monoid_examples}
A basic example is $P = \bbN^n \times \bbZ^m$.
Here $P^\times \cong \set 0 \times \bbZ^k$, so $P$ is sharp if and only if $m =
0$, and $P^\sharp \cong \bbN^n$, while $P^\gp = \bbZ^{n+m}$.  The faces of $P$
are the sets $\set{(a_1,\ldots,a_n,b_1,\ldots,b_m) : a_i = 0, i \in I}$ for
various $I \subset \set{1,\ldots,n}$.

A {\bf freely generated} monoid is isomorphic to $\bbN^n$ for some $n$; more
generally we say a monoid $P$ is {\bf smooth} if $P^\sharp$ is freely
generated; in this case $P$ is isomorphic to  $\bbN^n\times \bbZ^m$, where $n+m
= \dim(P)$ and $m = \rank(P^\times)$.

By the property \eqref{I:fin_gen}, every sharp monoid may be presented
(non-canonically) as a submonoid of $\bbN^n$ by choosing generators
$p_1,\ldots,p_n \in P$ and imposing finitely many generating relations of the
form
\begin{equation}
	\sum_{i=1}^n a_i^j p_i = \sum_{i=1}^n b_i^j p_i,
	\quad j = 1,\ldots,k,
	\quad a_i^j,b_i^j \in \bbN.
	\label{E:gen_relns}
\end{equation}
Using $P \cong P^\sharp \times P^\times$, any monoid may then be presented as a
submonoid of $\bbN^n\times \bbZ^m$ by choosing generators
\begin{equation}
	\set{p_1,\ldots,p_n,\pm q_1,\ldots, \pm q_m},
	\quad \set{p_i} \in P\setminus P^\times,
	\quad P^\times \cong \bbZ\pair{q_1,\ldots,q_m},
	\label{E:gens_nonsharp}
\end{equation}
with generating relations on the $p_i$ as above.

For several examples of non-toric monoids, see Example 3.2 in \cite{joyce}.
Among these we highlight one which plays an important role below: consider the
multiplicative monoid
\begin{equation}
	\bbR_+ = ([0,\infty),\cdot,1).
	\label{E:Rplus}
\end{equation}
In the first place, $\bbR_+$ is not finitely generated, and the identity
$0\cdot a = 0$ for all $a \in \bbR_+$ implies that $\bbR_+^\gp = \set 0$, so
that $\bbR_+$ is not integral. Moreover, $\bbR_+^\times = (0,\infty)$,
so that $\bbR_+$ is not sharp.

\subsection{Model spaces} \label{S:models}
To each monoid $P$, we associate the space
\[
	X_P = \Hom(P; \bbR_+),
\]
with $\bbR_+$ as in \eqref{E:Rplus}.
We distinguish a set of {\bf algebraic functions} on $X_P$,
namely, for each $p \in P$, let
\[
	x_p : X_P \to \bbR_+,
	\quad x_p(x) = x(p).
\]
Then $X_P$ is given the weakest topology for which these algebraic functions
are continuous.

Since homomorphisms preserve units, if $p \in P^\times$, it follows that $x(p)
\in (0,\infty)$ for all $x \in X_P$; equivalently, $x_p$ is a strictly positive
function.
If $P$ is sharp, then there is a distinguished point $\star \in X_P$ given by
the constant homomorphism $\star(p) = 0$ for all $p$, and each $x_p$ vanishes
at $\star$.

\begin{rmk}
We will see below that the $x_p$ play the role of {\em coordinates} on $X_P$;
for this reason we use the same letter $x$ to denote both points of $X_P$ and
(with subscripts) algebraic functions. No confusion should arise from this
convention.
\end{rmk}

The algebraic functions generate a smooth structure on $X_P$ in the following
sense. We say that a function $f : O \subset X_P \to \bbR$ defined on an open set
is a {\bf smooth function} if there exist $p_1,\ldots,p_n \in P$ and $g \in \C(W;
\bbR)$, where $W = x_{p_1}\times \cdots \times x_{p_n}(O) \subset \bbR^n_+$,
such that
\begin{equation}
	f = g\circ (x_{p_1}\times \cdots \times x_{p_n}) = g(x_{p_1},\ldots,x_{p_n}).
	\label{E:smooth_function_local}
\end{equation}
The smooth functions form a sheaf of $\bbR$-algebras on $X_P$ which we denote
by
\[
	\C_{X_P}(\bullet) =  \C(\bullet;\bbR).
\]

Lying `in between' the algebraic and smooth functions is the following notion.
We say that a function $b : O\subset X_P \to \bbR_+$ defined on an open set is
a {\bf b-function} if it is locally algebraic up to multiplication by a smooth,
strictly positive function; that is, for all $x \in O$, there is a possibly smaller
neighborhood $x \in O' \subset O$ on which
\begin{equation}
	b\rst O' = h\, x_p,
	\quad \text{for some}\ p \in P,
	\ h \in \C\big(O'; (0,\infty)\big).
	\label{E:b_function_local}
\end{equation}
Note that $h$ and $p$ are not uniquely determined since there may be many $q
\in P$ for which $x_q \rst O'$ is strictly positive.
The b-functions form a sheaf of (non-toric) monoids which we denote by
\[
	\B_{X_P}(\bullet) = \B(\bullet; \bbR_+),
\]
where $\B(O; \bbR_+)$ denotes the set of b-functions on $O$.
%
%
It is often convenient to allow the constant function $0$ (which is neither
algebraic nor a b-function); we denote the resulting sheaf of monoids by
\[
	\bB_{X_P} = \B_{X_P} \sqcup 0,
\]
with the obvious multiplication identity $0 \cdot b = 0$.

\begin{rmk}
There is an injective morphism $\B_{X_P} \to \C_{X_P}$ of sheaves.  If we
consider not the sheaf $\C(\bullet; \bbR)$ but rather $\C(\bullet; \bbR_+)$ as
the {\em structure sheaf} of the space (see \cite{GM}), then we still have a
morphism $\B_{X_P} \to \C_{X_P}$, but this has the property that
$\B_{X_P}^\times \to (\C_{X_P})^\times$ is an isomorphism. In the language of
log geometry \cite{KKMS, kato}, $\B_{X_P}$ is a {\em logarithmic structure} on
$X_P$.
\end{rmk}

The {\bf interior} of $X_P$ is the subspace
\[
	X^\circ_P = \Hom\big(P; (0,\infty)\big)
\]
of monoid homomorphisms to $(0,\infty)$.
Then $X^\circ_P \subset X_P$ is a dense open set.
In fact, $(0,\infty)$ is a group, so by the universal property of $P^\gp$ we
have
\begin{equation}
	X^\circ_P = X_{P^\gp} = \Hom\big(P^\gp;(0,\infty)\big) \cong (0,\infty)^{\dim(P)}.
	\label{E:intX_P}
\end{equation}
Smooth functions on $X^\circ_P$ as defined above coincide with the usual
notion of smooth functions on the manifold $(0,\infty)^{\dim(P)}$; thus every
$X_P$ has an interior which is diffeomorphic to $\bbR^{\dim(P)}$ via
$\log : (0,\infty)\cong \bbR$.

\begin{ex}
Every monoid homomorphism $x : \bbN \to \bbR_+$ is of the form $x(n) = a^n$ for
some $a = x(1) \in \bbR_+$, and likewise $x : \bbZ \to \bbR_+$ must be of the
same form for $a \in (0,\infty)$. Since the functor $\Hom(\bullet; \bbR_+)$
preserves finite products, we have
\[
	X_{\bbN^n\times \bbZ^m} = \bbR_+^n \times (0,\infty)^m \cong \bbR_+^n\times \bbR^m,
\]
which are the model spaces for manifolds with corners.
\label{X:X_NZ}
\end{ex}

\subsubsection{b-maps} \label{S:bmaps}
While $P \mapsto X_P$ is a contravariant functor from monoids to spaces, we
want to consider more general maps $X_P \to X_Q$ than those which arise from
homomorphisms $Q \to P$.
We say that a map $f : O \subset X_P \to X_Q$ defined on an open set is a {\bf
b-map} if, for every $q \in Q$, there exists some $p \in P$ and $h \in \C\big(O;
(0,\infty)\big)$ such that
\begin{equation}
	f^\ast x_q = h\,x_p
	\quad \text{or}\quad f^\ast x_q = 0.
	\label{E:b-map}
\end{equation}
If the second case never occurs, we say $f$ is an {\bf interior} b-map.
It follows that a b-map $f$ is a {\bf smooth map}, in the sense that
$f^\ast\C_{X_Q} \to \C_O$; however this notion of smoothness, without the
additional requirement \eqref{E:b-map}, turns out to be too weak to be very
useful. Thus by a {\bf diffeomorphism} $f : O\subset X_P \to U \subset X_Q$, we
mean an invertible interior b-map whose inverse, $f^{-1}$, is an interior
b-map.

The following are easy consequences of the definitions.
\begin{prop}
\mbox{}
\begin{enumerate}
[{\normalfont (i)}]
\item
A b-function $f : O \subset X_P \to \bbR_+$ is equivalent to an interior b-map
to $\bbR_+$, where the latter is considered as the model space $\bbR_+ =
X_{\bbN}$.
\item
A b-map $f : O \subset X_P \to X_Q$ is interior if and only if $f(O\cap
X^\circ_P) \subset X^\circ_Q$.
\item
Every homomorphism $Q \to P$ induces an interior b-map $X_P \to X_Q$.
\item
$f : O \subset X_P \to X_Q$ is an interior b-map if and only if it pulls back b-functions:
\[
	f^\ast \B_{X_Q} \to \B_O.
\]
\item
More generally, $f : O \subset X_P \to X_Q$ is a (not necessarily interior) b-map if and only if
\[
	f^\ast \bB_{X_Q} \to \bB_O.
\]
\end{enumerate}
\label{P:b-maps}
\end{prop}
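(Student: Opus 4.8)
The plan is to establish all five statements by reducing each one to the defining condition \eqref{E:b-map} for a single algebraic function $x_q$ and, where convenient, to the characterization of b-functions in \eqref{E:b_function_local}. The key observation making the whole proposition almost tautological is that for a monoid homomorphism $\phi : Q \to P$, the induced map $\phi^\ast : X_P \to X_Q$ satisfies $(\phi^\ast x)^\ast x_q = x_{\phi(q)}$, since $((\phi^\ast x)(q)) = x(\phi(q))$; this puts homomorphisms squarely inside the class of b-maps and handles (iii) immediately (with $h \equiv 1$ and $p = \phi(q)$, and the second alternative never occurring so it is interior). For (i), I would unwind both sides: a b-function $b : O \to \bbR_+$ is by \eqref{E:b_function_local} locally of the form $h\,x_p$, while an interior b-map $f : O \to \bbR_+ = X_\bbN$ is determined by the single function $f^\ast x_1 \in \C(O;(0,\infty))\cdot\{x_p : p \in P\}$ (no zero alternative, by interiority); since $x_1$ is the identity function on $\bbR_+$ under the identification $\bbR_+ = X_\bbN$, we have $f = f^\ast x_1$, and the two descriptions literally coincide. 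The identification $\B(\bullet;\bbR_+) \cong \{\text{interior b-maps to } X_\bbN\}$ is then a restatement.

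For (ii), the forward direction: if $f$ is interior then each $f^\ast x_q = h\,x_p$ with $h > 0$, and on $O \cap X_P^\circ$ every $x_p$ is strictly positive (since on the interior $x$ factors through $P^\gp$, so $x(p) \in (0,\infty)$ by \eqref{E:intX_P}), hence $(f^\ast x_q)(x) = x(q') > 0$ for the point $x' = f(x)$, showing $x' \in \Hom(Q;(0,\infty)) = X_Q^\circ$. Conversely, if $f(O \cap X_P^\circ) \subset X_Q^\circ$ but $f$ is not interior, then some $f^\ast x_q \equiv 0$ on an open subset; since $X_P^\circ$ is dense and open, that open subset meets it, contradicting that $f$ maps interior points to points where all $x_q$ are positive. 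Parts (iv) and (v) are then packaged: an interior b-map pulls back each $x_q$, hence any local product $h\,x_q$, to something of the form $(h\circ f)(x_p) \cdot (\text{strictly positive}) = (\text{strictly positive})\cdot x_p$, i.e., a b-function, giving $f^\ast\B_{X_Q} \to \B_O$; conversely, pulling back b-functions forces each $f^\ast x_q$ to be a b-function (never $0$), which is exactly interiority. For (v) one allows the constant $0$ on both sides and drops the interiority constraint, so the same argument with $\bB = \B \sqcup 0$ goes through verbatim.

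I do not expect a serious obstacle; the only point requiring a little care is the \emph{local-to-global} bookkeeping in (iv)--(v): the definition of b-map only guarantees the factorization $f^\ast x_q = h\,x_p$ pointwise-locally (a possibly different $p$ and $h$ near each point), whereas a b-function is likewise only locally of the form $h\,x_p$, so one must check the two sheaf-theoretic conditions match on a common refinement of covers rather than globally. This is routine once one notes that both "b-map" and "b-function" are defined by the existence of such local representations, so no genuine gluing is needed---the sheaf morphisms $f^\ast\B_{X_Q} \to \B_O$ and $f^\ast\bB_{X_Q} \to \bB_O$ are defined section-by-section on small opens. I would state this explicitly once and then treat the five parts as short verifications.
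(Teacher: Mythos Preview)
Your proposal is correct and follows exactly the approach the paper intends: the paper gives no proof at all, merely introducing the proposition with ``The following are easy consequences of the definitions,'' and your sketch unwinds those definitions in the expected way. One small remark: the paper's definition \eqref{E:b-map} of a b-map is phrased with a single $p$ and $h \in \C(O;(0,\infty))$ on all of $O$, whereas b-functions are defined only locally; your local-to-global paragraph already flags this mismatch, and the resolution (restrict to connected $O$, or read \eqref{E:b-map} locally as one must anyway when passing to manifolds) is indeed routine and does not affect any of the five verifications.
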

\medskip
\noindent
From this point on, {\em map} will mean {\em interior b-map} unless otherwise
specified.
\medskip

\begin{rmk}
The definitions above differ from \cite{joyce}. Joyce calls b-maps and b-functions
simply `smooth', distinguishing smooth functions with target $\bbR_+$ from
those with target $\bbR$ or $(0,\infty)$.
\end{rmk}

While Example~\ref{X:X_NZ} gives the basic model space for a smooth
monoid, we can embed a general $X_P$ into some $\bbR_+^n\times \bbR^m$ by
choosing generators and relations. The following is a straightforward
consequence of the definitions.

\begin{prop}[\cite{joyce}, Prop.\ 3.14]
Let $P$ be a monoid, with generators \eqref{E:gens_nonsharp} and generating
relations \eqref{E:gen_relns}.  Write $x_i = x_{p_i} : X_P \to \bbR_+$ and $y_i
= x_{q_i} : X_P \to (0,\infty)$. Then the map
\[
	x_1\times \cdots \times x_n \times y_1\times\cdots \times y_m : X_P
	\to \bbR_+^n\times (0,\infty)^m
\]
is a diffeomorphism onto its image
\[
	\set{(x_1,\ldots,x_n,y_1,\ldots,y_m) :
	\textstyle\prod_i x_i^{a_i^j} = \prod_i x_i^{b_i^j},\; j = 1,\ldots,k}
	\subset \bbR_+^n\times (0,\infty)^m.
\]
\label{P:nonsmooth_model}
\end{prop}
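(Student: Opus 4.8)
The plan is to recognize the displayed product map as the interior b-map induced by a surjective monoid homomorphism, to check by hand that it is a bijection onto the indicated subset, and then to verify that its set-theoretic inverse is again an interior b-map; together these say it is a diffeomorphism onto its image. First I would set $Q = \bbN^n \times \bbZ^m$ and let $\phi : Q \to P$ be the homomorphism carrying the standard generators of $Q$ to $p_1,\ldots,p_n,q_1,\ldots,q_m$. This is well defined because $P^\times$, being a subgroup of the lattice $P^\gp$, is free abelian of some rank $m$ as in \eqref{E:gens_nonsharp}, and it is surjective because the elements \eqref{E:gens_nonsharp} generate $P$. By Proposition~\ref{P:b-maps} the induced map $\Phi := \phi^\ast : X_P \to X_Q$ is an interior b-map; unwinding the definitions (and using $X_Q \cong \bbR_+^n\times(0,\infty)^m$ as in Example~\ref{X:X_NZ}) it is exactly $x_1\times\cdots\times x_n\times y_1\times\cdots\times y_m$. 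Writing $Z$ for the subset of $\bbR_+^n\times(0,\infty)^m$ cut out by the equations $\prod_i x_i^{a_i^j} = \prod_i x_i^{b_i^j}$, the relations \eqref{E:gen_relns} give $x(\sum_i a_i^j p_i) = x(\sum_i b_i^j p_i)$ for every $x \in X_P = \Hom(P; \bbR_+)$, so $\Phi$ maps $X_P$ into $Z$.

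Next I would show $\Phi : X_P \to Z$ is bijective. For injectivity, using Proposition~\ref{P:split_ex_seq} and the sharpness of $P^\sharp$ (so that $P^\sharp$ is generated over $\bbN$ by the images of the $p_i$), every $p \in P$ can be written $p = \sum_i c_i p_i + \sum_j d_j q_j$ with $c_i\in\bbN$ and $d_j\in\bbZ$, whence $x(p) = \prod_i x(p_i)^{c_i}\prod_j x(q_j)^{d_j}$ is determined by $\Phi(x)$. For surjectivity onto $Z$, given $(\xi,\eta)\in Z$ the rule $(\alpha,\beta)\mapsto \prod_i\xi_i^{\alpha_i}\prod_j\eta_j^{\beta_j}$ defines a homomorphism $Q\to\bbR_+$ (the entries $\eta_j > 0$ accommodate negative exponents), and it identifies the two sides of each relation in \eqref{E:gen_relns} precisely because $(\xi,\eta)\in Z$; since $P$ is by construction the quotient of $Q$ by these relations, this homomorphism factors through a unique $x\in\Hom(P; \bbR_+)$ with $\Phi(x)=(\xi,\eta)$.

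Finally I would promote this bijection to a diffeomorphism. Because each algebraic function $x_p$ on $X_P$ equals some monomial $\prod_i x_{p_i}^{c_i}\prod_j x_{q_j}^{d_j}$, the weak topology on $X_P$ is already generated by $x_1,\ldots,x_n,y_1,\ldots,y_m$, so $\Phi$ is a homeomorphism onto $Z$ with the subspace topology. The same identity shows that $\Phi^{-1}$ pulls $x_p$ back to the restriction to $Z$ of $\prod_i X_i^{c_i}\prod_j Y_j^{d_j}$ in the ambient coordinates $X_i,Y_j$, which is a b-function there (equivalently an interior b-map to $\bbR_+ = X_\bbN$); hence, by the definitions \eqref{E:smooth_function_local} and \eqref{E:b_function_local}, $\Phi^{-1}$ carries smooth functions and b-functions on $X_P$ to restrictions of smooth functions and b-functions on $Z$. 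Conversely $\Phi^\ast X_i = x_{p_i}$ and $\Phi^\ast Y_j = x_{q_j}$ show $\Phi$ pulls ambient smooth and b-functions back to smooth and b-functions on $X_P$. Thus $\Phi$ and $\Phi^{-1}$ are both interior b-maps and $\Phi$ is a diffeomorphism onto $Z$. Alternatively, the splitting $X_P \cong X_{P^\sharp}\times(0,\infty)^m$ coming from Proposition~\ref{P:split_ex_seq} reduces the whole argument to the sharp case, in which the $q_j$ and $y_j$ are absent and $Z$ lies in $\bbR_+^n$.

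The step I expect to be the real obstacle, as opposed to pure bookkeeping, is the fact used for surjectivity: that the finitely many relations \eqref{E:gen_relns} generate the full equivalence relation presenting $P$ as a quotient of $Q = \bbN^n\times\bbZ^m$, so that a homomorphism out of $P$ is the same as a homomorphism out of $Q$ annihilating those relations. This is precisely the assertion that \eqref{E:gens_nonsharp}--\eqref{E:gen_relns} constitute a presentation of $P$, whose availability for toric $P$ rests on the finite generation of congruences on finitely generated monoids; once it is granted, everything else follows by unwinding definitions.
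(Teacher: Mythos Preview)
The paper does not actually supply a proof of this proposition; it attributes the result to Joyce and prefaces it only with the remark that it ``is a straightforward consequence of the definitions.'' Your argument is correct and is precisely the kind of unwinding one would carry out: realize the map as $\phi^\ast$ for the presentation surjection $\phi : \bbN^n\times\bbZ^m \to P$, check bijectivity onto the relation locus $Z$, and verify that both $\Phi$ and $\Phi^{-1}$ respect b-functions.

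One clarification regarding your final paragraph: the potential obstacle you flag, that the finitely many relations \eqref{E:gen_relns} generate the full congruence presenting $P$, is already built into the hypothesis. The proposition assumes from the outset that \eqref{E:gen_relns} are \emph{generating} relations, so a homomorphism $\bbN^n\times\bbZ^m \to \bbR_+$ satisfying them automatically descends to $P$, and surjectivity of $\Phi$ onto $Z$ is immediate. The nontrivial fact you allude to (that finitely generated commutative monoids are finitely presented, i.e., R\'edei's theorem) is what guarantees such a choice of generating relations exists in the first place, and the paper invokes this implicitly in \S\ref{S:monoid_examples} when it asserts that every sharp toric monoid admits such a presentation. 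So within the proposition itself there is no hidden difficulty; the work has been pushed into the standing assumptions.
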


We refer to $(x,y) = (x_1,\ldots,x_n,y_1,\ldots,y_m)$ as {\bf
coordinates} on $X_P$, and emphasize the fact that they depend on a choice
of generators for $P$.

Suppose $(x,y)$
and $(x',y')$
are coordinates on $X_P$ and $X_Q$, respectively, and $f : O \subset X_P \to
X_Q$ is a b-map. Then
\[
	f^\ast x'_j = h'_j(x,y)\,
	  \big(\textstyle\prod_i x_i^{\mu_{ij}}\, \prod_i y_i^{\nu_{ij}}\big)
	= h_j(x,y)\,\prod_i x_i^{\mu_{ij}},
\]
for some $\mu_{ij} \in \bbN$, $\nu_{ij} \in \bbZ$ and smooth $h'_j > 0$, and
since $y_i$ and $y_i^{-1}$ are already smooth positive functions, we absorb
these into $h_j > 0$.
Since the $y'_j$ are also smooth and positive, $f^\ast y'_j$ is just a smooth
positive function $g_j(x,y)$.
It follows that $f$ has the coordinate form
\begin{gather}
	f : (x,y) \mapsto \big(h(x,y)\, x^\mu, g(x,y)\big) = (x',y'),
	\label{E:local_b_map}
	\\ h \in C^\infty(O; (0,\infty)^{n'}),
	\ g \in C^\infty(O; (0,\infty)^{m'}),
	\ \mu \in \Mat(n\times n'; \bbN),
	\notag
\end{gather}
where we use the obvious vector notation.
Note that, with this convention, exponentiation and matrix multiplication are
related by $(x^\mu)^\nu = x^{\mu\nu}$.

\subsubsection{Boundary faces and support} \label{S:model_faces}
For each face $S \leq P$, the inclusion $S \hookrightarrow P$ induces a
surjective map $X_P \to X_S$.
Conversely, $X_S$ is embedded in $X_P$ by a canonical section of this map;
indeed, every $x \in X_S$ has a lift $\wt x \in X_P$ defined by
\begin{equation}
	\wt x (p) = \begin{cases} x(p) & p \in S \\ 0 & p \in P \setminus S. \end{cases}
	\label{E:boundary_inclusion}
\end{equation}
We identify $X_S$ with its image in $X_P$ under $x \mapsto \wt x$, and refer to
it as a {\bf boundary face}.
This embedding of $X_S$ into $X_P$ is a non-interior b-map.
There is an inclusion-preserving bijection between faces of $P$ and boundary
faces of $X_P$.

Every $x \in X_P$ lies in the interior of a unique boundary face; indeed,
$x^{-1}(0)$ is a prime ideal in $P$, and the {\bf support} of $x$ is the face
\[
	\supp(x) = S \leq P
	\ \text{such that}\ x^{-1}(0) = P \setminus S.
\]
Then $x \in X^\circ_{\supp(x)}$, and as a result we obtain a stratification
\begin{equation}
	X_P = \bigsqcup_{S \leq P} X^\circ_S,
	\quad \ol{X^\circ_S} = X_S = \bigsqcup_{T \leq S} X^\circ_T,
	\label{E:local_stratn}
\end{equation}
where $\ol{X^\circ_S}$ denotes the closure of $X^\circ_S$ in $X_P$.

Note that the monoid $P$ is not itself a local diffeomorphism invariant; it may
certainly happen that an open set $O \subset X_P$ is diffeomorphic to an open
set $U \subset X_Q$ even if $P \not \cong Q$ (for example, $X^\circ_P \cong
X^\circ_Q$ whenever $\dim(P) = \dim(Q)$). Likewise, $\supp(x)$ is not a local
invariant, though it turns out that the quotient monoid $P/\supp(x)$ {\em is}
invariant.
Denote by $\B_x$ the stalk of $\B$ at $x$. It is a generally non-toric monoid.

\begin{lem}[\cite{joyce}, \S3.4]
For $x \in X_P$, the sharpening $\B_x^\sharp$ is a toric monoid, and there is a
canonical isomorphism
\begin{equation}
	\B_x^\sharp \cong P/\supp(x).
	\label{E:B_sharp}
\end{equation}
\label{L:B_sharp}
\end{lem}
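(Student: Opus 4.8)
The plan is to work locally and compute the stalk $\B_x$ directly from the description of b-functions in~\eqref{E:b_function_local}, then sharpen. First I would reduce to the point $x$ being in the interior of the full face, i.e.\ to the case $\supp(x) = P$, by replacing $P$ with its localization. Indeed, since $X_S^\circ$ is open in $X_S$ and $X_S$ is a boundary face of $X_P$, a neighborhood of $x$ in $X_P$ looks like a neighborhood of the corresponding point in $X_{S^{-1}P}$: concretely, the functions $x_p$ for $p \notin S$ are small but the functions $x_q$ for $q \in S^\times \cup (S \setminus S^\times)$ behave as on the interior, and by Proposition~\ref{P:split_ex_seq} we have $S^{-1}P \cong P/S \times S^\gp$, so a neighborhood of $x$ is diffeomorphic to a neighborhood of $(\star, y_0) \in X_{P/S} \times (0,\infty)^{\dim S}$. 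The group factor contributes only invertible b-functions (strictly positive smooth functions times $y^\nu$ with $\nu \in \bbZ$), which die under sharpening. So it suffices to prove: for $Q = P/\supp(x)$ sharp and $x = \star$, one has $\B_{\star}^\sharp \cong Q$.

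Next I would identify $\B_\star$ itself. By~\eqref{E:b_function_local}, a germ of a b-function at $\star$ is $h\,x_q$ for some $q \in Q$ and some germ $h$ of a strictly positive smooth function, and the ambiguity is precisely: $h\,x_q$ and $h'\,x_{q'}$ define the same germ iff $x_q/x_{q'}$ extends to a strictly positive smooth germ at $\star$. The key point is that, since $Q$ is sharp, $x_q(\star) = 0$ for every $q \neq 0$, so $x_q$ and $x_{q'}$ can agree up to a positive smooth factor near $\star$ only if $q = q'$. This is where I expect the main obstacle to lie: one must show that the germ $x_q$ genuinely remembers $q \in Q$, not merely $q$ up to the equivalence relation defining $Q^\gp$. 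The cleanest route is to test against the algebraic functions on the interior: on $X_Q^\circ \cong (0,\infty)^{\dim Q}$, the function $x_q$ is a monomial determined by the image of $q$ in $Q^\gp$, and since $Q$ is toric it is integral, so $Q \hookrightarrow Q^\gp$; thus if $x_q = h\,x_{q'}$ as germs with $h > 0$ smooth, restricting to the interior forces $q = q'$ in $Q^\gp$ hence in $Q$, and then $h \equiv 1$ as a germ (being $x_q/x_q$). Therefore the assignment $q \mapsto [x_q] \in \B_\star^\sharp$ is a well-defined injective monoid homomorphism.

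It remains to check surjectivity and that we land in the sharp monoid correctly. Surjectivity onto $\B_\star^\sharp$ is immediate from~\eqref{E:b_function_local}: every germ of a b-function is $h\,x_q$, which maps to $[x_q]$ in the sharpening. For the identification with $\B_\star / \B_\star^\times$, I would note $\B_\star^\times$ consists exactly of germs of strictly positive smooth functions — a germ $h\,x_q$ is invertible in the monoid $\B_\star$ iff $x_q$ is, iff $q = 0$ (again using sharpness of $Q$) — so the composite $Q \to \B_\star \to \B_\star^\sharp$ is the homomorphism we want, and it is a bijection by the two previous paragraphs. Finally, tracing back through the localization reduction, the group factor $S^\gp$ contributed only units, confirming $\B_x^\sharp \cong Q = P/\supp(x)$ in general. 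The one subtlety to handle carefully in writing this up is the passage from a neighborhood in $X_P$ to a neighborhood in $X_{S^{-1}P}$: one should verify that b-function germs correspond under the local diffeomorphism implicit in the stratification~\eqref{E:local_stratn}, which follows because diffeomorphisms pull back b-functions by Proposition~\ref{P:b-maps}.
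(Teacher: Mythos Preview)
Your argument is correct, but it takes a different and somewhat longer route than the paper.  The paper works directly at the given point $x$: it defines the homomorphism $P \to \B_x^\sharp$, $p \mapsto [x_p]$, observes surjectivity from the local form~\eqref{E:b_function_local}, and identifies the kernel as $\supp(x)$ using the characterization of $\B_x^\times$ as the germs with $b(x) > 0$.  No preliminary reduction to the sharp case is made.

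Your localization step---passing to $X_{S^{-1}P} \cong X_{P/S}\times X_{S^\gp}$ and noting that the group factor contributes only units---is valid, but it relies on the identification of $X_{S^{-1}P}$ with an open neighborhood of $X_S^\circ$ in $X_P$, which in the paper is only established \emph{after} this lemma (Proposition~\ref{P:U_S}).  There is no circularity, since that proposition does not use Lemma~\ref{L:B_sharp}, but you are making a forward reference.  What your approach buys is a more explicit treatment of injectivity: you argue via restriction to the interior that distinct $q \in Q$ give distinct monomial germs, whereas the paper compresses this into the single phrase ``the kernel is $\supp(x)$'' (which, strictly speaking, identifies only the preimage of $0$ and leaves the passage to $P/\supp(x)$ to the reader).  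The paper's version is shorter; yours is more careful about exactly the point where the monoid-theoretic subtlety lies.
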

\fixthmeq
\begin{proof}
The units $\B_x^\times$ are the germs $[b]$ of b-functions such that $b(x) >
0$, and $\B_x^\sharp = \B_x/\B_x^\times$. The homomorphism $P \to \B_x^\sharp$,
$p \mapsto [x_p]$ is surjective, since every $b$ has the local form $b =
h\,x_p$ for some $p$ and $h > 0$, and hence $[b] \equiv [x_p] \mod \B_x^\times$.
Moreover, the kernel of this homomorphism is precisely $\supp(x) \leq P$, since
$x_p(x) = x(p) > 0$ if and only if $p \in \supp(x)$.
\end{proof}

In light of this result, we define the {\bf codimension} of $x \in X_P$ by
\[
	\codim(x) = \dim(\B_x^\sharp) = \codim(\supp(x)).
\]
By Proposition~\ref{P:b-maps}, a local diffeomorphism $f : O \subset X_P \to U
\subset X_Q$ induces isomorphisms $\B_x \cong \B_{f(x)}$ and $\B^\sharp_x \cong
\B^\sharp_{f(x)}$. It follows that $\B_x^\sharp$ and $\codim(x)$ are local
diffeomorphism invariants.

\subsubsection{Localization and normal models} \label{S:model_normal_models}
For each face $S \leq P$, the localization $\iota : P \to S^{-1} P$ induces
a map
\begin{equation}
	\iota^\ast : X_{S^{-1}P} \to X_P
	\label{E:iota_ast}
\end{equation}
of spaces.
In the case $S = P$, this is the inclusion of $X_{P^\gp} = X^\circ_P$ into $X_P$.

\begin{prop}
The map \eqref{E:iota_ast} is a diffeomorphism onto its image,
which is a dense open set in $X_P$ containing $X^\circ_S$.
\label{P:U_S}
\end{prop}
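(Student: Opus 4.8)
The plan is to analyze the map $\iota^\ast : X_{S^{-1}P} \to X_P$ directly in terms of homomorphisms, then invoke Proposition~\ref{P:split_ex_seq} to reduce to an elementary model, and finally use the coordinate description of b-maps from \S\ref{S:bmaps} to verify the diffeomorphism claim. First I would identify the image: a homomorphism $x : P \to \bbR_+$ factors through $\iota : P \to S^{-1}P$ precisely when $\iota(s)$ becomes a unit for every $s \in S$, i.e., when $x(s) \in (0,\infty)$ for all $s \in S$; equivalently, when $S \subseteq \supp(x)$. So the image of $\iota^\ast$ is the set $U_S := \set{x \in X_P : x(s) > 0\ \forall\, s \in S} = \bigcup_{T \geq S} X^\circ_T$, which visibly contains $X^\circ_S$. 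This set is open, since it is the locus where the finitely many algebraic functions $x_{s_i}$ (for $s_1,\ldots,s_k$ generating $S$) are nonzero, and it is dense because it contains $X^\circ_P$, which is dense by the discussion following \eqref{E:intX_P}. By the universal property of localization, $\iota^\ast$ is a bijection onto $U_S$: the factoring homomorphism $\wt x : S^{-1}P \to \bbR_+$ is unique, and conversely every $\wt x$ restricts to some $x$ with $S \subseteq \supp(x)$.

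It remains to check that $\iota^\ast$ and its inverse are both interior b-maps (and in particular smooth in the sense of \S\ref{S:bmaps}). That $\iota^\ast$ is an interior b-map is immediate from Proposition~\ref{P:b-maps}(iii), since it is induced by the homomorphism $\iota$. For the inverse, I would use the splitting from Proposition~\ref{P:split_ex_seq}: choosing a splitting of \eqref{E:face_ex_seq} gives an isomorphism $S^{-1}P \cong P/S \times S^\gp$, hence a diffeomorphism $X_{S^{-1}P} \cong X_{P/S} \times X_{S^\gp} = X_{P/S} \times (0,\infty)^{\dim S}$ by \eqref{E:intX_P} and the product-preservation of $\Hom(\bullet;\bbR_+)$. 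Under this identification, pick generators $p_1,\ldots,p_n$ of $P/S$ and $q_1,\ldots,q_\ell$ of $S^\gp$ as in \eqref{E:gens_nonsharp}, together with their images in $P$ under the chosen splitting; these provide coordinates $(x,y)$ on an open subset of $X_P$ adapted to $U_S$. In these coordinates $U_S$ is exactly the locus where the $y_j$-type functions coming from $S$ are positive, and $\iota^\ast$ becomes the inclusion of $X_{P/S} \times (0,\infty)^\ell$ as that locus, with the $x_{p_i}$ unchanged and the $S$-coordinates becoming honest positive coordinates — a map of the form \eqref{E:local_b_map} with trivial exponent matrix on the relevant block, whose inverse is again of that form.

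The only subtlety — and the step I expect to require the most care — is checking that the inverse map $U_S \to X_{S^{-1}P}$ genuinely pulls back b-functions to b-functions, i.e., that it is an \emph{interior} b-map and not merely a continuous bijection. The point is that on $U_S$, for each generator $s$ of $S$, the algebraic function $x_s$ is strictly positive, so $x_s$ and $x_s^{-1}$ are both smooth positive functions there; thus the algebraic functions $x_p$ for $p \in S^{-1}P$ — which have the form $x_q \cdot x_s^{-1}$ for $q \in P$, $s \in S$ — pull back to $h\, x_q$ with $h = x_s^{-1} > 0$ smooth on $U_S$, exactly matching \eqref{E:b_function_local}. Combined with Proposition~\ref{P:b-maps}(iv), this shows the inverse pulls back $\B_{X_{S^{-1}P}}$ into $\B_{U_S}$, completing the verification that $\iota^\ast$ is a diffeomorphism onto the open dense set $U_S \supseteq X^\circ_S$.
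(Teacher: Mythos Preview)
Your proof is correct and follows essentially the same route as the paper's: identify the image as $\{x \in X_P : x_s(x) > 0\ \forall\,s \in S\}$, deduce openness and density from writing this as a finite intersection of the sets $x_{s_i}^{-1}\bigl((0,\infty)\bigr)$ over generators of $S$, obtain bijectivity from the universal property of localization, and then verify that both $\iota^\ast$ and its inverse are interior b-maps. The paper dispatches this last step with the phrase ``straightforward to verify'', whereas you supply the actual content --- namely that for $p = [q,s] \in S^{-1}P$ the pulled-back function is $x_q\,x_s^{-1}$, which is a b-function on $U_S$ since $x_s^{-1}$ is smooth and positive there --- so your argument is in fact a fleshed-out version of the paper's. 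Your middle paragraph invoking the splitting of Proposition~\ref{P:split_ex_seq} and setting up adapted coordinates is not wrong, but it is largely superseded by the cleaner direct verification in your final paragraph; you could safely drop it.
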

\begin{proof}
The image of $X_{S^{-1}P} = \Hom(S^{-1}P; \bbR_+)$ in
$X_P$ consists of those $x\in \Hom(P; \bbR_+)$ such that $x(s) \neq 0$ for
all $s \in S$ (which necessarily includes all $x \in X^\circ_S$).
Conversely, any such $x \in X_P$ extends to a unique $\wt x \in X_{S^{-1}P}$
via $\wt x(-s) = x(s)^{-1}$.  Thus $\iota^\ast$ is a bijection onto its image, and
it is straightforward to verify that $\iota^\ast$ and its inverse are interior
b-maps.

To see that $\iota^\ast(X_{S^{-1}P})$ is open and dense, we first restate the
characterization of $\iota^\ast(X_{S^{-1}P})$ above as the condition that $x_s
> 0$ for all $s \in S$.  Then we can write $\iota^\ast(X_{S^{-1}P})$ as the
finite intersection of the open dense sets $x_s^{-1}((0,\infty))$, for a finite
set of generators for $S$.
\end{proof}

From this point on, we identify $X_{S^{-1}P}$ with its image in $X_P$.
Using \eqref{E:splitting} and $X_{S^\gp} = X^\circ_S$ we have
\[
	X_{S^{-1}P}\cong X_{P/S} \times X^\circ_S,
\]
and $X_S^\circ$ sits in $X_{S^{-1}P}$ as the subset $\set{\star} \times X_S^\circ$.
We call $X_{P/S}$ the {\bf normal model} for $X^\circ_S$ in $X_P$, and refer to $P/S$
as the {\bf conormal monoid}, for reasons which will become clear later.
Applying this to $\supp(x)$ and using Lemma~\ref{L:B_sharp} we obtain

\begin{cor}
Each $x \in X_P$ has an open neighborhood diffeomorphic to
\[
	X_{W(x)} \times \bbR^l,
\]
where $W(x) = \B_x^\sharp \cong P/\supp(x)$ and $l = \dim(P) - \codim(x)$,
in which $\supp(x)$ is represented by $\set{\star} \times \bbR^l$ and $x$
is identified with the point $(\star, 0)$.

The conormal monoid $W(x)$, the normal model space $X_{W(x)}$, and the numbers $\dim(P)$ and
$\codim(x)$ are all local diffeomorphism invariants.
\label{C:normal_nbhd}
\end{cor}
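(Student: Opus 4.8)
The plan is to assemble this corollary directly from Proposition~\ref{P:U_S}, Proposition~\ref{P:split_ex_seq}, and Lemma~\ref{L:B_sharp}, with no new geometric input. First I would set $S = \supp(x) \leq P$, so that by definition $x \in X^\circ_S$. Proposition~\ref{P:U_S} then gives a diffeomorphism $\iota^\ast : X_{S^{-1}P} \to U$ onto a dense open neighborhood $U$ of $X^\circ_S$ in $X_P$, and in particular $x \in U$. The splitting \eqref{E:splitting} of Proposition~\ref{P:split_ex_seq}, namely $S^{-1}P \cong P/S \times S^\gp$, combined with the functoriality of $Q \mapsto X_Q$ (which turns products of monoids into products of spaces, as already noted for $\Hom(\bullet;\bbR_+)$), yields
\[
	U \cong X_{S^{-1}P} \cong X_{P/S} \times X_{S^\gp}.
\]
By \eqref{E:intX_P} applied to the group $S^\gp$ (or directly, since $X_{S^\gp} = \Hom(S^\gp; (0,\infty)) \cong (0,\infty)^{\dim(S)}$), and using the diffeomorphism $\log : (0,\infty) \cong \bbR$, we get $X_{S^\gp} \cong \bbR^l$ with $l = \dim(S) = \dim(P) - \codim(S) = \dim(P) - \codim(x)$.

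Next I would identify the pieces with the named objects in the statement. By Lemma~\ref{L:B_sharp}, $W(x) := \B_x^\sharp \cong P/\supp(x) = P/S$, so $X_{P/S} \cong X_{W(x)}$ and the neighborhood is diffeomorphic to $X_{W(x)} \times \bbR^l$ as claimed. To locate $x$ and the stratum $X^\circ_S$ inside this product: under $X_{S^{-1}P} \cong X_{P/S} \times X_{S^\gp}$, the discussion preceding the corollary already records that $X^\circ_S$ sits as $\{\star\} \times X^\circ_S = \{\star\} \times X_{S^\gp}$, i.e.\ as $\{\star\} \times \bbR^l$; and the point $x$, being the homomorphism $P \to \bbR_+$ supported exactly on $S$ with $x|_S$ a chosen point of $X^\circ_S$, corresponds under the splitting to $(\star, \text{that point})$, which after the $\log$ identification one normalizes to $(\star, 0)$ by composing with a translation of $\bbR^l$ (a diffeomorphism). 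I would spell out that $\star \in X_{P/S}$ is well-defined because $P/S$ is sharp (every face contains $P^\times$, as noted after Proposition~\ref{P:split_ex_seq}), so the constant homomorphism to $0$ is available.

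Finally, for the invariance claims I would invoke the remark at the end of \S\ref{S:model_faces}: a local diffeomorphism $f : O \subset X_P \to U \subset X_Q$ induces isomorphisms $\B_x \cong \B_{f(x)}$, hence $\B_x^\sharp \cong \B_{f(x)}^\sharp$, so $W(x)$ — and therefore the model space $X_{W(x)}$ — is a local diffeomorphism invariant; similarly $\codim(x) = \dim(\B_x^\sharp)$ is invariant, and $\dim(P)$ is invariant because it equals the dimension of the manifold $X^\circ_P \cong \bbR^{\dim(P)}$ (equivalently $\dim(P) = \codim(x) + l$ and both $\codim(x)$ and the local dimension $l + \codim(x)$ are invariant). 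This last part is essentially a citation of facts already established, so there is no real obstacle; the only point requiring a little care is the bookkeeping that the canonical base point of $X_{S^{-1}P}$ under the splitting really is the given $x$ up to an $\bbR^l$-translation, which I would handle by noting that the choice of splitting \eqref{E:splitting} is exactly the freedom needed to send $x$ to $(\star,0)$. I expect the main (very mild) obstacle to be stating the identification of $x$ with $(\star,0)$ cleanly given that the splitting in Proposition~\ref{P:split_ex_seq} is only canonical up to the choice that absorbs the translation.
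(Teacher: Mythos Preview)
Your proposal is correct and follows essentially the same approach as the paper, which presents this as an immediate corollary by applying Proposition~\ref{P:U_S} and the splitting $X_{S^{-1}P}\cong X_{P/S}\times X^\circ_S$ to $S=\supp(x)$ and then invoking Lemma~\ref{L:B_sharp}. Your write-up simply unpacks these steps (and the invariance remarks following Lemma~\ref{L:B_sharp}) in more detail than the paper does.
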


\subsection{Manifolds with generalized corners} \label{S:MGC}
A {\bf manifold with generalized corners} is a second
countable Hausdorff space $M$ which is locally diffeomorphic to open sets $O
\subset X_P$ for various $P$.
More precisely, $M$ is equipped with a {\em maximal atlas} $\set{O'_a}_{a \in
A}$ of open sets $O'_a \subset M$ and homeomorphisms (aka {\em charts}) $\phi_a
: O'_a \cong O_a \subset X_{P(a)}$ such that the {\em transition functions}
\[
	\phi_a \phi^{-1}_b : \phi_b(O'_a \cap O'_b) \subset X_{P(b)} \to \phi_a(O'_a \cap O'_b) \subset X_{P(a)}
\]
are diffeomorphisms.
Since $\dim(P)$ is a local diffeomorphism invariant, it follows that the $P(a)$
for each connected component of $M$ have the same dimension, which we define to
be the {\bf dimension} of that component; for simplicity we may assume that $M$
is connected and then
\[
	\dim(M) = \dim\big(P(a)\big) \text{ for all $a$}.
\]

A function on $M$ is a {\bf smooth function} (resp.\ {\bf b-function}) if its
compositions with the charts of $M$ are smooth (resp.\ b-). These form sheaves
of $\bbR$-algebras (resp.\ monoids) which we denote by $\C_M$ (resp.\ $\B_M$).
It suffices to check these conditions with respect to a single atlas (i.e., a
single open cover by charts) on $M$.
Likewise, a {\bf b-map} (resp.\ interior b-map) $f : M \to N$ between manifolds
with generalized corners is a continuous map all of whose compositions with the
charts of $M$ and $N$ are b-maps (resp.\ interior b-maps).
Equivalently, $f$ is a b-map if and only if $f^\ast(\B_{N}) \subset
\bB_{M} = \B_{M} \sqcup 0$ and is interior if and only if
$f^\ast(\B_{N}) \subset \B_{M}$.

A manifold with generalized corners has a well-defined stratification by
codimension:
\begin{equation}
	M = \bigsqcup_{0 \leq l \leq \dim(M)} S^l(M),
	\quad S^l(M) = \set{x \in M : \codim(x) = l}.
	\label{E:codim_stratn}
\end{equation}
Indeed, it follows from Corollary~\ref{C:normal_nbhd} that the $S^l(M)$ are smooth
open manifolds of dimension $\dim(M) - l$ and from \eqref{E:local_stratn} that
$\ol{S^l(M)} = \bigsqcup_{k \geq l} S^k(M)$.

According to Joyce, a {\bf boundary face} of $M$ with codimension $l$ is a
connected component, $F$, of the set $C_l(M)$ of pairs $(x,\gamma)$, where $x
\in M$ and $\gamma$ is a consistent choice of connected component of $S^l(M)
\cap U$ as $U$ ranges over sufficiently small neighborhoods of $x$ ($x$ itself
need not be in $S^l(M)$).
In local charts, $F$ is identified with various $X_S \leq X_P$ with $\codim(S)
= l$.
Codimension 0 boundary faces are the connected components of $M$.

The {\bf interior} of a boundary face $F$ is the set, $F^\circ$, of $(x,\gamma)
\in F$ such that $x \in S^l(M)$. In fact, it follows from
Corollary~\ref{C:normal_nbhd} that each $x \in S^l(M)$ has a unique $\gamma$,
so $F^\circ$ is simply a connected component of $S^l(M)$. 
Locally, $F^\circ$ is identified with $X^\circ_S$ for the various $X_S$.
A boundary face $F$ inherits from $M$ the structure of a manifold with
generalized corners of dimension $\dim(M) - l$, and $F^\circ$ is an open manifold of
the same dimension.

It is often the case that the boundary faces of $M$ are {\em embedded}, i.e.,
the map $F \to M$, $(x,\gamma)\mapsto x$ is injective. (It suffices for this to
hold for boundary faces of codimension 1.) In this case a boundary face of $M$
is simply the closure of a connected component of $S^l(M)$ in $M$. 
It is convenient to require this as part of the definition of a manifold with
generalized corners, which we do from now on. We will come back to this point
in \S\ref{S:commentary}.

\begin{rmk}
The definition of manifolds with (ordinary) corners and b-maps
\cite{melroseaps} is recovered by requiring all the model monoids $P(a)$ to be
smooth.
\end{rmk}

\medskip\noindent
From this point on, {\em manifold} will mean {\em connected manifold with
generalized corners and embedded boundary faces} and {\em map} will mean {\em
interior b-map}, unless otherwise specified.

\subsubsection{Manifolds as stratified spaces} \label{S:mflds_as_stratified}
We denote the set of boundary faces of $M$ by $\cF(M) = \bigsqcup_{0 \leq l
\leq \dim(M)} \cF_l(M)$, where $\cF_l(M)$ is the set of boundary faces of
codimension $l$, and we use the notation
\[
	G \leq F \iff F,G \in \cF(M), \text{ with } G \subset F
\]
to denote the partial order relation on boundary faces.
We consider the stratification of $M$ by boundary faces (which is finer than
the stratification by codimension above):
\begin{equation}
	M = \bigsqcup_{F \leq M} F^\circ,
	\quad \ol{F^\circ} = F = \bigsqcup_{G \leq F} G^\circ.
	\label{E:boundary_stratn}
\end{equation}

Corollary~\ref{C:normal_nbhd} implies that each $x \in F^\circ \leq M$ has a
neighborhood diffeomorphic to $X_{W(x)} \times \bbR^{\dim(F)}$ for some $W(x)$.
By diffeomorphism invariance of $W(x)$ and the assumption that $F$ is
connected, it follows that $W(x) = W(F)$ is independent of $x \in F^\circ$,
i.e., that all $x \in F^\circ$ have the same normal model $X_{W(F)}$.
\begin{rmk}
In the language of stratified spaces \cite{pflaum}, we say that $M$, equipped
with the stratification \eqref{E:boundary_stratn} is a {\em topologically
locally trivial} stratified space, with each stratum $F^\circ$ having a fixed
{\em typical fiber} $X_{W(F)}$.
The {\em depth} of a point $x \in M$ coincides with its codimension, and
\eqref{E:codim_stratn} is the depth stratification of $M$.
It satisfies the Mather conditions, and by Proposition~\ref{P:nonsmooth_model}
admits a {\em smooth structure} (in the sense of stratified spaces), with
respect to which the sheaf of smooth functions is equivalent to the one defined
above.
\end{rmk}

\begin{prop}
Let $f : M \to N$ be a b-map (not necessarily interior). Then for each $F \leq
M$, there is a unique $G \leq N$ with the properties that $f(F^\circ) \subset G^\circ$ and
$f \rst F : F \to G$ is an interior b-map. In particular, every b-map is a
morphism of stratified spaces.
\label{P:b-maps_stratified}
\end{prop}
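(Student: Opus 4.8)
The plan is to reduce everything to the local model statement, which is essentially already contained in the discussion of \S\ref{S:model_faces}. First I would fix $x \in F^\circ$ and work in a local chart $\phi : O' \subset M \to O \subset X_P$ with $\phi(x)$ having support $S = \supp(\phi(x)) \leq P$, so that $F^\circ$ is locally identified with $X_S^\circ$, and similarly fix a chart $\psi : U' \subset N \to U \subset X_Q$ around $f(x)$. Writing the local representative $g = \psi f \phi^{-1} : O \to U$ as a b-map between model spaces, the key point is to understand which boundary face of $X_Q$ contains $g(X_S^\circ \cap O)$. By the coordinate form \eqref{E:local_b_map} (or directly from \eqref{E:b-map}), for each $q \in Q$ either $g^\ast x_q = h\,x_p$ for some $p \in P$ and $h > 0$, or $g^\ast x_q \equiv 0$; in the first case $x_q$ vanishes at $g(z)$ iff $x_p$ does, i.e.\ iff $p \notin \supp(z)$, and in the second case $x_q$ vanishes identically. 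Letting $T = \set{q \in Q : g^\ast x_q \text{ is not identically } 0 \text{ and } g^\ast x_q > 0 \text{ on } X_S^\circ}$, I claim $T$ is a face of $Q$ and that $g(X_S^\circ) \subset X_T^\circ$; this is where the bulk of the local argument lies.

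Granting the local statement, the globalization is straightforward. For $x \in F^\circ$ I would set $G_x$ to be the boundary face of $N$ with $f(x) \in G_x^\circ$, well-defined by \eqref{E:boundary_stratn}. The local analysis shows that in a neighborhood of $x$, $f$ maps $F^\circ$ into $G_x^\circ$; since $F^\circ$ is connected and the assignment $x \mapsto G_x$ is locally constant on $F^\circ$ (two nearby points land in the same local stratum of $N$, hence the same global boundary face, using that boundary faces are embedded), $G_x = G$ is independent of $x \in F^\circ$. Then $f(F^\circ) \subset G^\circ$, and since $F = \ol{F^\circ}$ and $G = \ol{G^\circ}$ with $f$ continuous, $f(F) \subset \ol{G^\circ} = G$, so $f$ restricts to a continuous map $F \to G$. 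That $f\rst F$ is a b-map follows because it is locally the restriction of a b-map to a boundary face, which is again a b-map (restriction to $X_S \leq X_P$ composed with the b-map $g$); and it is \emph{interior} precisely because $f(F^\circ) \subset G^\circ$, by Proposition~\ref{P:b-maps}(ii) applied in charts, together with the fact that $F^\circ = F \cap S^{\dim(M)-\dim(F)}(M)$ is the interior of $F$ as a manifold in its own right. Uniqueness of $G$ is immediate: any $G'$ with $f(F^\circ) \subset (G')^\circ$ must have $(G')^\circ \cap G^\circ \neq \emptyset$, forcing $G' = G$ since the $H^\circ$ partition $N$.

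The statement ``every b-map is a morphism of stratified spaces'' then follows formally: a morphism of stratified spaces is a continuous map sending each stratum into a stratum, and the strata of $M$ and $N$ are exactly the $F^\circ$ and $G^\circ$, so the first assertion says precisely this. One subtlety worth flagging: since $f$ need not be interior, a codimension-$0$ face $F = M$ (a component of $M$) may be sent into a positive-codimension face $G \lneq N$ — this is exactly the non-interior case, and nothing in the argument above excludes it; the restriction $f\rst M : M \to G$ is nonetheless interior by construction, which is consistent.

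The main obstacle I anticipate is the local claim that the set $T \leq Q$ defined above is genuinely a \emph{face} of $Q$ (not merely a submonoid), i.e.\ that $q + q' \in T \implies q \in T$ and $q' \in T$. This should follow from the multiplicativity $g^\ast x_{q+q'} = g^\ast x_q \cdot g^\ast x_{q'}$ together with the fact that a product of two b-functions-or-zero is positive on $X_S^\circ$ iff both factors are — but one must be a little careful about the ``or $0$'' case and about whether positivity on the dense set $X_S^\circ$ is the right condition, rather than positivity at the single point $\phi(x)$. Checking that these two conditions agree (they do, because $\supp$ is locally constant on $X_S^\circ$ and the vanishing locus of each $g^\ast x_q$ is a union of boundary faces) is the technical heart of the proof; everything else is bookkeeping with the stratification \eqref{E:boundary_stratn} and the definitions of b-map and interior b-map.
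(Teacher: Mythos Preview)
Your proposal is correct and follows essentially the same route as the paper: a local argument identifying the target face $T \leq Q$ via the vanishing pattern of the pullbacks $g^\ast x_q$, followed by globalization using connectedness of $F^\circ$. The paper phrases the local step slightly more cleanly---it observes directly that $\{q \in Q : f^\ast x_q \equiv 0\}$ is a prime ideal (hence its complement is a face), then applies this to $f$ restricted to the open subset $O \cap X_S^\circ$ of the model space $X_S$---which dissolves the ``main obstacle'' you flag about $T$ being a face without further checking; but your multiplicativity argument for the same fact is equally valid.
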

\begin{proof}
First consider the local case of a b-map $f : O \subset X_P \to X_Q$. The set
$q \in Q$ such that $f^\ast (x_q) = 0$ is a prime ideal $Q \setminus T$ for
some $T \leq Q$, and it follows that $f(O) \subset X_T$. Since $f^\ast(x_t) \neq
0$ for $t \in T$, it follows that $f : O \to X_T$ is interior.

Now suppose $O \subset X_P$ is a chart for $O' \subset M$ with $F \cap O \neq
\emptyset$. The stratification \eqref{E:boundary_stratn} restricted to $O'$ is
identified with the intersections of $O$ with the strata $X_P = \bigsqcup_{S
\leq P} X_S^\circ$; in particular $F^\circ \cap O' \cong O \cap X^\circ_S$ for
some $S$.  Applying the local result to $f \rst (O \cap X^\circ_S)$, it follows
that $f$ maps $F^\circ \cap O'$ into $G^\circ \cap U'$ for some $G \leq N$.
Since $F^\circ$ and $G^\circ$ are smooth connected manifolds, $f : F^\circ \to
G^\circ$ globally.  It remains to show that $E \leq F$ implies $H \leq G$,
where $H$ is the unique boundary face such that $f : E^\circ \to H^\circ$, but
since this holds locally it must hold globally as well.
\end{proof}

We associate to each b-map $f : M \to N$ the map of sets
\begin{equation}
	\cF(f) : \cF(M) \to \cF(N),
	\quad \cF(f)(F) = G \text{ such that $f(F^\circ) \subset G^\circ$}.
	\label{E:f_sharp}
\end{equation}

\subsubsection{Tangent bundle} \label{S:tangent}
The usual notion of tangent bundle, defined via derivations on germs of smooth
functions, is not particularly useful in the setting of corners, generalized or
not.
Melrose introduced the b-tangent bundle on a manifold with corners
(\cite{melrose},\cite{melroseaps}), and in \cite{joyce} Joyce extended this to
the setting of generalized corners.

Let $x \in M$ and consider the stalk $\C_x$ of $\C_M$ at $x$. It has two monoid
structures, with respect to addition and multiplication. There are two natural
monoid homomorphisms
\[
	\exp : (\C_x,+,[0]) \to \B_x,
	\quad \inc : \B_x \to (\C_x,\cdot,[1]),
\]
given by exponentiation and inclusion, respectively.
The {\bf b-tangent space} at $x$ is the real vector space, $\bT_x M$, of pairs
\begin{align}
	(v,v') &\in \Der(\C_x; \bbR)\times \Hom(\B_x; \bbR)
	\quad \text{such that} \notag
	\\ v(\inc[b]) &= b(x)\,v'([b]),
	\quad \text{for all $[b] \in \B_x$}.
	\label{E:bT_condition}
\end{align}
Here $v$ is a derivation on $\C_x$, $v'$ is a monoid homomorphism from $\B_x$
to $(\bbR,+)$ and \eqref{E:bT_condition} is consistent with these structures
and with the $\bbR$ vector space structures on $\Der(\C_x;\bbR)$ and
$\Hom(\B_x;\bbR)$.
On one hand, since $v$ is a derivation, $v([\exp f]) = \exp f(x)\,v([f])$,
while on the other hand from \eqref{E:bT_condition}, $v([\exp f]) =
v(\inc(\exp[f])) = \exp f(x)\,v'(\exp[f])$. It follows that $v$ and $v'$ also
satisfy the condition
\begin{equation}
	v([f]) = v'(\exp[f])
	\quad \text{for all $[f] \in \C_x$}.
	\label{E:v_from_vprime}
\end{equation}

\begin{prop}[\cite{joyce}, Example 3.41]
In the model space $X_P$, there is an isomorphism of vector spaces from
$\Hom(P; \bbR)$ to $\bT_x X_P$ for any $x \in X_P$, given by
\begin{equation}
	\Hom(P; \bbR) \ni \alpha
	\mapsto v = \sum_{p \in P} \alpha(p) x_p \pa_{x_p},
	\label{E:bT_local}
\end{equation}
with $v' \in \Hom(\B_x; \bbR)$ determined from $v$ by
$v'([x_p]) = \alpha(p)$ and \eqref{E:v_from_vprime}.

In particular, $\bT_x X_P$ is a finite dimensional vector space with $\dim(\bT_x X_P) = \dim(P)$.
\label{P:bT_local}
\end{prop}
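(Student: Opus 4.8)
The plan is to produce an explicit two-sided inverse to the map $\alpha \mapsto v$ and, along the way, to make rigorous sense of the formal sum in \eqref{E:bT_local}. First I would record that, since $(\bbR,+)$ is a group, the universal property of $P^\gp$ (\S\ref{S:localization}) gives for each $\alpha \in \Hom(P;\bbR)$ a unique extension $\alpha^\gp \in \Hom(P^\gp;\bbR)$, and that $\alpha \mapsto \alpha^\gp$ identifies $\Hom(P;\bbR) \cong \Hom(P^\gp;\bbR) \cong \bbR^{\rank(P^\gp)} = \bbR^{\dim(P)}$; thus the dimension claim follows once the map to $\bT_x X_P$ is shown to be an isomorphism.

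Next I would interpret and construct $v$. Choosing generators \eqref{E:gens_nonsharp} and the attendant coordinates $(x_1,\dots,x_n,y_1,\dots,y_m)$ of Proposition~\ref{P:nonsmooth_model}, the formal sum $\sum_{p} \alpha(p)\, x_p\pa_{x_p}$ is to be read as the genuine (finite) vector field $V = \sum_i \alpha(p_i)\, x_i\pa_{x_i} + \sum_j \alpha^\gp(q_j)\, y_j\pa_{y_j}$ on $\bbR_+^n \times (0,\infty)^m$. Applying $V$ to a defining relation $\prod_i x_i^{a_i^k} - \prod_i x_i^{b_i^k}$ from \eqref{E:gen_relns} produces $\big(\sum_i a_i^k \alpha(p_i)\big)\prod_i x_i^{a_i^k} - \big(\sum_i b_i^k\alpha(p_i)\big)\prod_i x_i^{b_i^k}$, and since $\sum_i a_i^k p_i = \sum_i b_i^k p_i$ in $P$ one has $\sum_i a_i^k \alpha(p_i) = \alpha^\gp(\sum_i a_i^k p_i) = \alpha^\gp(\sum_i b_i^k p_i) = \sum_i b_i^k\alpha(p_i)$, so $V$ is tangent to the binomial subvariety $X_P$ and restricts there to a derivation $v$ on $\C_x$. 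By \eqref{E:smooth_function_local} every germ in $\C_x$ has the form $g(x_{r_1},\dots,x_{r_k})$ with $r_a \in P$, so Hadamard's lemma and the Leibniz rule show that $v$ is the unique derivation with $v([x_r]) = \alpha^\gp(r)\, x(r)$ for all $r \in P$; in particular $v$ does not depend on the chosen generators. I would then define $v' \in \Hom(\B_x;\bbR)$ on a germ $[b] = [h\,x_p]$ (with $h(x) > 0$) by $v'([b]) = \alpha(p) + v([h])/h(x)$; independence of the representation reduces, using Lemma~\ref{L:B_sharp} to compare two representations through elements of $\supp(x)$, to the already-established identity $v([x_q]) = \alpha^\gp(q)\,x(q)$ on the positive germs $x_q$, $q \in \supp(x)$. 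A direct computation on generating germs $h\,x_p$, using the Leibniz rule together with the defining formulas $v([x_p]) = \alpha^\gp(p)\,x(p)$ and $v'([h\,x_p]) = \alpha(p) + v([h])/h(x)$, shows that $(v,v')$ satisfies \eqref{E:bT_condition} (hence also \eqref{E:v_from_vprime}), so $\alpha \mapsto (v,v')$ is a well-defined, manifestly $\bbR$-linear map $\Hom(P;\bbR) \to \bT_x X_P$.

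For invertibility: injectivity is immediate since $v'([x_p]) = \alpha(p)$ for all $p$. For surjectivity, given any $(v,v') \in \bT_x X_P$ I would set $\alpha(p) := v'([x_p])$, which is a monoid homomorphism because $x_{p+p'} = x_p x_{p'}$ and $v'$ is; then \eqref{E:bT_condition} applied to $b = x_r$ gives $v([x_r]) = x(r)\,v'([x_r]) = \alpha^\gp(r)\,x(r)$ for every $r \in P$, so by the Hadamard characterization above $v$ coincides with the derivation built from $\alpha$, whence $v'$ coincides with the homomorphism built from $\alpha$ both on all units (both equal $[h] \mapsto v([h])/h(x)$, by \eqref{E:bT_condition} for the unit germ $[h]$) and on all $[x_p]$, hence on all of $\B_x$. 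Therefore $\alpha \mapsto (v,v')$ is a linear isomorphism and $\dim \bT_x X_P = \dim\Hom(P;\bbR) = \dim(P)$.

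I expect the only genuine obstacle to be the well-definedness of $v$ — that the formal expression \eqref{E:bT_local} descends to the binomial subvariety $X_P$ and is independent of the generating set — but as above this dissolves once everything is pinned to the single identity $v([x_r]) = \alpha^\gp(r)\,x(r)$ via Hadamard's lemma, which simultaneously handles tangency, presentation-independence, and the reduction of \eqref{E:bT_condition} to the Leibniz rule on generating germs. The remaining verifications — linearity, injectivity, well-definedness of $v'$, and the dimension count — are routine.
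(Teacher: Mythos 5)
Your argument is correct and follows essentially the same route as the paper: the inverse map is $v' \mapsto \alpha$ with $\alpha(p) = v'([x_p])$, and $(v,v')$ is recovered from $\alpha$ by combining the derivation (chain-rule) property on germs $g(x_{p_1},\ldots,x_{p_n})$ with \eqref{E:bT_condition} applied to the germs $[x_{p_i}]$, together with the dimension count via $\Hom(P;\bbR)=\Hom(P^\gp;\bbR)$. The only difference is that you spell out the verifications the paper leaves as ``straightforward'' or as an ``instructive exercise'' (tangency of the coordinate vector field to the binomial relations, well-definedness of $v'$ on b-function germs), which is a matter of detail rather than of method.
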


\noindent The formal sum \eqref{E:bT_local}
and the characterization of $v'$
mean that if $f \in \C_{X_P}(O)$ and $b \in \B_{X_P}(O)$ are given as in
\eqref{E:smooth_function_local} and \eqref{E:b_function_local} by $f =
g(x_{p_1},\ldots,x_{p_n})$ and $b = x_p h(x_{p_1},\ldots,x_{p_m})$ for smooth
functions $g$ and $h  > 0$, then
\begin{align*}
	v([f]) &= \sum_{i=1}^n \alpha(p_i) x_{p_i}(x)\pa_{x_{p_i}} g\big(x_{p_1}(x),\ldots,x_{p_n}(x)\big),
	\\ v'([b]) &= \alpha(p) + \sum_{i=1}^m \alpha(p_i) x_{p_i}(x)\pa_{x_{p_i}} \log h\big(x_{p_1}(x),\ldots,x_{p_m}(x)\big).
\end{align*}
It is an instructive exercise to check that \eqref{E:bT_local} is consistent
with relations between elements $p_i \in P$.

\begin{proof}
It is straightforward to check that $(v,v') \in \bT_x X_P$, and linearity of
\eqref{E:bT_local} is clear. The inverse map is given by $v' \mapsto \alpha$,
where $\alpha \in \Hom(P; \bbR)$ is determined by $\alpha(p) = v'([x_p])$.

That $\alpha$ is recovered from $(v,v')$ is clear. To see that $(v,v')$ is
recovered from $\alpha$, suppose that $(v,v') \in \bT_x X_P$. Since $v$ is a
derivation means that on a smooth function $f = g(x_{p_1},\ldots,x_{p_n})$ it
must act by
\[
	v([f]) = \sum_{i=1}^n c(x,p_i)\pa_{x_{p_i}} g\big(x_{p_1}(x),\ldots,x_{p_n}(x)\big),
\]
for some $c(x,p_i) \in\bbR$. To determine these coefficients, consider
$x_{p_i}$ as a smooth function and use \eqref{E:bT_condition} to deduce
\[
	c(x,p_i) = v([x_{p_i}]) = x_{p_i}(x)\,v'([x_{p_i}]) = x_{p_i}(x)\,\alpha(p_i).
\]

The final statement follows from the fact that $\Hom(P; \bbR) =
\Hom(P^\gp; \bbR)$ and $\dim(P) = \rank(P^\gp)$.
\end{proof}

The b-tangent spaces form the fibers of the {\bf b-tangent bundle} $\bT M \to
M$. From the local characterization in Proposition~\ref{P:bT_local}, this
inherits the structure of a smooth vector bundle of rank $\dim(M)$ in the
category of manifolds with generalized corners, which is canonically
trivialized over charts.
A b-map $f : M\to N$ induces a vector bundle morphism
\[
	\bd f_\ast : \bT M \to \bT N
\]
via the pull-back action on the sheaves $\C$ and $\B$.

\subsubsection{Normal bundles} \label{S:normal}
The inclusion $\iota_F : F \to M$ of a boundary face $F \in \cF(M)$ is a
non-interior b-map. Thus, for every $x \in F$,
\[
	\iota_F^\ast : \B_{M,x} \to \bB_{F,x} = \B_{F,x} \sqcup 0
\]
partitions $\B_x = \B_{M,x}$ into two submonoids, the {\bf normal} and {\bf
tangential} elements with respect to $F$:
\[
	\B_{x} = \cN(F,x) \sqcup \cT(F,x),
	\quad \cN(F,x) = (\iota_F^\ast)^{-1}(0),
	\quad \cT(F,x) =  (\iota_F^\ast)^{-1}(\B_{F,x}).
\]
In other words, $[b] \in \B_{x}$ is normal to $F$ if and only if
there is a neighborhood $U$ of $x$ in $F$ such that $b \rst U = 0$, and
tangential otherwise.
In particular, if $x \in F^\circ$, then $\cT(F,x) = \B_x^\times$ is just the
submonoid of units, as follows from Corollary~\ref{C:normal_nbhd}.

The {\bf b-normal space} to $F$ at $x$ is the real vector space
\[
	\bN_x F = \Hom(\B_{x}/\cT(F,x); \bbR)
\]
consisting of monoid homomorphisms from $\B_{x}$ to $\bbR$ which are trivial on
tangential elements to $F$.
The {\bf normal monoid} to $F$ at $x$ is
\[
	\bM_x F = \Hom(\B_{x}/\cT(F,x); \bbN) = (\B_{x}/\cT(F,x))^\vee
\]
and evidently $\bN_x F = \bM_x F \otimes_\bbN \bbR$.

There is a well-defined, injective linear map
\begin{equation}
	\bN_x F \to \bT_x M,
	\quad v' \mapsto (0,v').
	\label{E:bN_to_bT}
\end{equation}
Indeed, $v'$ determines a unique derivation $v \in \Der(\C_{x}; \bbR)$
satisfying \eqref{E:v_from_vprime}; however, since the image $\exp(\C_{x})
\subset \B_{x}^\times \subset \B_{x}$ is entirely tangential to $F$, we
must have $v = 0$.

For interior points $x \in F^\circ$, the normal monoid is $\bM_x F =
(\B_x/\cT(F,x))^\vee = (\B_x^\sharp)^\vee$, the dual to the conormal monoid
$W(F) = \B_x^\sharp$.
%
Using $\cT(F,x)$ rather than $\B_x^\times$ has the effect of extending this as
a bundle over the non-interior points of $F$ as well, as the next result shows.

\begin{prop}
For all $x \in F$, there is a natural monoid isomorphism
\begin{equation}
	\B_x/\cT(F,x) \cong W(F)
	\label{E:B_quotient}
\end{equation}
In particular, $\bN_x F \cong \Hom(W(F); \bbR)$ and $\bM_x F\cong
W(F)^\vee$.
\label{P:bN_local}
\end{prop}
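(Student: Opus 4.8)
The plan is to reduce to a local computation in a chart near $x$, using the stratification of the boundary face by its own boundary faces and the local normal form from Corollary~\ref{C:normal_nbhd}. The statement is already known at interior points $x \in F^\circ$ (it is recorded just before Proposition~\ref{P:bN_local}), so the content is to extend the identification over all of $F$ in a way that is independent of $x$. First I would fix $x \in F$ and choose a chart $O' \ni x$ with $\phi : O' \cong O \subset X_P$; under this identification $F \cap O'$ corresponds to a boundary face $X_S \leq X_P$ with $\codim(S) = \codim(F) = l$, and $x$ corresponds to a point of $X_S$ lying in the interior $X_T^\circ$ of some face $T \leq S$ of $X_S$ (equivalently $T \leq P$ with $T \leq S$). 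The point is then to compute the quotient $\B_{X_P,x}/\cT(X_S,x)$ purely in terms of the monoids $P$, $S$, $T$.

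The key steps are as follows. By Lemma~\ref{L:B_sharp}, the stalk sharpens to $\B_{X_P,x}^\sharp \cong P/\supp(x) = P/T$; since $\cT(F,x) \supset \B_x^\times$, the quotient $\B_x/\cT(F,x)$ factors through this sharpening, so it suffices to identify the image of $\cT(F,x)$ in $P/T$. Now $[x_p] \in \B_{X_P,x}$ is tangential to $X_S$ precisely when $x_p$ does not vanish identically on a neighborhood of $x$ in $X_S$, i.e.\ when $x_p$ restricted to $X_S$ is not the zero function near $x$; but restriction of algebraic functions to $X_S$ corresponds to the quotient map $P \to S$ (the boundary inclusion \eqref{E:boundary_inclusion} sends $x \in X_S$ to $\wt x$ with $\wt x(p) = 0$ for $p \notin S$), so $x_p\rst{X_S}$ is, up to a positive smooth factor, the algebraic function $x_{\bar p}$ on $X_S$ where $\bar p$ is the image of $p$ in $S$ — wait, more carefully: $p \in P$ maps to $X_S$ by the surjection $X_P \to X_S$ dual to $S \hookrightarrow P$, so $x_p\rst{X_S}$ is the pullback of $x_p$ under $S \hookrightarrow P$, which vanishes near $x$ iff $p \notin \supp_S(x)$-related face; tracking the stratification \eqref{E:local_stratn} of $X_S$, the tangential elements are exactly those $p$ whose image in $S$ lies in the face $T$ (viewed as a face of $S$). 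Hence $\cT(F,x)$ maps onto the image of $T$-in-$S$ inside $P/T$, and one computes
\[
	\B_x/\cT(F,x) \cong (P/T)\big/(\text{image of } S) \cong P/S,
\]
using the standard isomorphism $(P/T)/(S/T) \cong P/S$ for nested faces $T \leq S \leq P$. Finally, $P/S \cong W(F)$ by the identification of the conormal monoid of $X_S^\circ$ in $X_P$ from \S\ref{S:model_normal_models} together with the fact (from \S\ref{S:mflds_as_stratified}) that $W(F)$ is the common normal model of all interior points; since $W(F)$ is a local diffeomorphism invariant, the composite isomorphism is independent of the chart, which gives the asserted natural isomorphism. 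The statements $\bN_x F \cong \Hom(W(F);\bbR)$ and $\bM_x F \cong W(F)^\vee$ then follow immediately by applying $\Hom(-;\bbR)$ and $\Hom(-;\bbN) = (-)^\vee$ and using $W(F)^{\sharp\vee} \cong W(F)^\vee$ (as $W(F)$ is sharp).

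The main obstacle I expect is the identification of the tangential submonoid $\cT(F,x)$: one must argue carefully that a b-function $b$ with local form $h\,x_p$ ($h > 0$) has $b\rst{F} = 0$ near $x$ \emph{if and only if} $p$ lies outside the relevant face, i.e.\ that no cancellation or positivity accident makes an ostensibly-vanishing algebraic function nonzero or vice versa. This requires unwinding the stratification \eqref{E:local_stratn} of $X_S$ around the point $x \in X_T^\circ$ and checking that "vanishing on a neighborhood in $X_S$" is detected exactly by the face $T \leq S$; the $0$-vs-positive dichotomy is clean because $x_p\rst{X_S}$ is itself an algebraic function on $X_S$ (not merely a b-function), so its vanishing locus is a union of boundary faces and the local question reduces to whether $X_T^\circ$ is contained in that locus, which is a purely combinatorial condition on faces. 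Everything else is bookkeeping with the quotient-monoid isomorphisms already available in \S\ref{S:faces}.
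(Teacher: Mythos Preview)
Your overall strategy coincides with the paper's: reduce to a chart with $F = X_S \leq X_P$, write every b-germ as $h\,x_p$ with $h>0$ so that $[b]\equiv[x_p]$ modulo $\cT(X_S,x)$, and compute the kernel of $p\mapsto[x_p]$. The paper does this in one step, observing directly that $x_p$ is tangential to $X_S$ if and only if $p\in S$ (since $x_p\rst{X_S}$ is identically zero precisely when $p\in P\setminus S$, independently of where $x$ sits in $X_S$), so $P\to\B_x/\cT(X_S,x)$ is surjective with kernel $S$ and the quotient is $P/S$.

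Your route through the sharpening $\B_x^\sharp\cong P/T$ is unnecessary but harmless; however, your description of the tangential submonoid is garbled. The sentence ``the tangential elements are exactly those $p$ whose image in $S$ lies in the face $T$'' is not correct: tangentiality to $X_S$ is the condition $p\in S$, full stop, while $p\in T$ is the (stronger) condition that $[x_p]$ be a \emph{unit} at $x$. Your subsequent computation silently switches to the correct statement (you quotient $P/T$ by the image of $S$, i.e.\ $S/T$, giving $(P/T)/(S/T)\cong P/S$), so the conclusion is right, but the justification preceding it would lead to the wrong kernel if taken literally. The ``obstacle'' you flag at the end is exactly the point the paper handles in one line: any neighborhood of $x$ in $X_S$ meets $X_S^\circ$, where $x_p>0$ for all $p\in S$, so $x_p\rst{X_S}$ cannot vanish on a neighborhood unless $p\notin S$.
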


\begin{proof}
This is a local statement, so it suffices to assume $M = X_P$ and $F = X_S$ for
some $S \leq P$.
Consider a b-function of the form $b = h\,x_p$, $h > 0$ on a neighborhood of
$x$. By positivity, $[h] \in \cT(X_S,x)$ so $[b] \equiv [x_p] \mod \cT(X_S,x)$.
From \eqref{E:boundary_inclusion} it follows that $x_p$ is tangential to $X_S$
if and only if $p \in S$ (while $x_p$ may vanish at points in the boundary of
$X_S$, it cannot vanish identically on a neighborhood unless $p \in P\setminus
S$). It follows that $P \to \B_{X_P,x}/\cT(X_S,x)$, $p \mapsto [x_p]$ is a
surjective homomorphism with kernel $S$, so that $\B_{X_P,x}/\cT(X_S,x) \cong
P/S$, which is the conormal monoid for $X_S$.
\end{proof}

From the local characterizations of $\bT_x M$ and $\bN_x F$ of
Propositions~\ref{P:bT_local} and \ref{P:bN_local}, respectively, it follows
that the map \eqref{E:bN_to_bT} extends to a short exact sequence
\[
	0 \to \bN_x F \to \bT_x M \to \bT_x F \to 0
\]
coinciding locally (i.e., on charts) with the sequence
\[
	0 \to \Hom(P/S; \bbR) \to \Hom(P; \bbR) \to \Hom(S; \bbR) \to 0
\]
for $M = X_P$, $F = X_S$.

The b-normal spaces $\bN_x F$ (resp.\ monoids $\bM_x F$) form the fibers of the
{\bf b-normal bundle} $\bN F \to F$ (resp.\ {\bf b-normal monoid bundle} $\bM F
\to F$) which inherits from Proposition~\ref{P:bN_local} the structure of a
smooth vector bundle of rank $\codim(F)$ (resp.\ bundle of monoids of dimension
$\codim(F)$) on $F$.

\begin{prop}
Let $f : M \to N$ be an interior b-map. Then for all $F \in \cF(M)$, the
differential $\bd f_\ast : \bT M \to \bT N$ restricts to a vector bundle
morphism
\begin{equation}
	\bd f_\ast : \bN F \to \bN G,
	\quad G = \cF(f)(F),
	\label{E:bdf_N}
\end{equation}
where $\cF(f)$ was defined in \eqref{E:f_sharp}, which further restricts to a
morphism of monoid bundles
\begin{equation}
	\bd f_\ast : \bM F \to \bM G,
	\label{E:bdf_M}
\end{equation}
with the property that each $\bd f_\ast : \bM_x F \to \bM_{f(x)} G$ is an
interior homomorphism of monoids.
\label{P:differential_normal}
\end{prop}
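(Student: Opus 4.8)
The plan is to reduce the assertion to three pointwise statements at each $x\in F$ with image $y=f(x)\in G$: that $\bd f_\ast$ carries $\bN_x F$ into $\bN_y G$, carries $\bM_x F$ into $\bM_y G$, and restricts there to an interior homomorphism of monoids. These suffice, since $\bd f_\ast\colon\bT M\to\bT N$ is already a vector bundle morphism and $\bN F\subset\bT M\rst F$ and $\bM F\subset\bN F$ are defined fibrewise, so the image conditions automatically promote to bundle morphisms $\bN F\to\bN G$ and $\bM F\to\bM G$ over $f\rst F\colon F\to G$. The engine for all three is the commuting square $f\circ\iota_F=\iota_G\circ(f\rst F)$, in which $f\rst F\colon F\to G$ is an interior b-map by Proposition~\ref{P:b-maps_stratified} (this is exactly what $G=\cF(f)(F)$ means); pulling back gives the identity
\[
	\iota_F^\ast\circ f^\ast=(f\rst F)^\ast\circ\iota_G^\ast\colon\B_{N,y}\longrightarrow\bB_{F,x}.
\]
I would also recall that $\bd f_\ast$ acts by precomposition with $f^\ast$, so that $\bd f_\ast$ of a normal vector $(0,v')\in\bN_x F\subset\bT_x M$ is just $(0,\,v'\circ f^\ast)$.

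First I would show $f^\ast(\cT(G,y))\subseteq\cT(F,x)$. If $[c]\in\cT(G,y)$ then $\iota_G^\ast[c]$ is a genuine germ of b-function on $G$; since $f$ and $f\rst F$ are both interior they pull back germs of b-functions to germs of b-functions (Proposition~\ref{P:b-maps}(iv) and its global form in \S\ref{S:MGC}), so the displayed identity gives $\iota_F^\ast(f^\ast[c])=(f\rst F)^\ast(\iota_G^\ast[c])\in\B_{F,x}$, i.e.\ $f^\ast[c]\in\cT(F,x)$. Hence, if $v'\colon\B_{M,x}\to\bbR$ is trivial on $\cT(F,x)$ (i.e.\ represents an element of $\bN_x F$), then $v'\circ f^\ast\colon\B_{N,y}\to\bbR$ is trivial on $\cT(G,y)$ and so represents an element of $\bN_y G$; thus $\bd f_\ast(0,v')=(0,v'\circ f^\ast)$ lands in $\bN_y G$. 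Running the identical argument with $\bbN$ in place of $\bbR$ gives $\bd f_\ast(\bM_x F)\subseteq\bM_y G$, so $\bd f_\ast$ also restricts to a morphism of monoid bundles.

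For interiority I would use the complementary half of the square. Since pull-back sends the constant germ $0$ to $0$, the same identity gives $f^\ast(\cN(G,y))\subseteq\cN(F,x)$. Because $\B_{N,y}=\cN(G,y)\sqcup\cT(G,y)$ and likewise for $F$, the monoid homomorphism $\ol f\colon W(G)=\B_{N,y}/\cT(G,y)\to\B_{M,x}/\cT(F,x)=W(F)$ induced by $f^\ast$ (well-defined by the previous step) satisfies $\ol f^{-1}(0)=\{0\}$: a nonzero class in $W(G)$ is represented by a normal germ, whose pull-back is normal and hence nonzero in $W(F)$. Under the identifications $\bM_x F\cong W(F)^\vee$ and $\bM_y G\cong W(G)^\vee$ of Proposition~\ref{P:bN_local}, the map $\bd f_\ast\colon\bM_x F\to\bM_y G$ is exactly the dual $\ol f^\vee\colon q\mapsto q\circ\ol f$. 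If its image lay in a proper face of $W(G)^\vee$, that face would be $R^\perp$ for some face $R\le W(G)$ with $R\ne\{0\}=W(G)^\times$, by the face--annihilator bijection of \S\ref{S:duality}; choosing $0\ne r\in R$ we have $\ol f(r)\ne0$, so Lemma~\ref{L:nontriv_dual} (applied to $W(F)$ with $S$ its minimal face $\{0\}$, so $S^\perp=W(F)^\vee$) furnishes $q\in W(F)^\vee=\bM_x F$ with $q(\ol f(r))\ne0$, i.e.\ $\ol f^\vee(q)(r)\ne0$, contradicting $\ol f^\vee(q)\in R^\perp$. So $\bd f_\ast(\bM_x F)$ lies in no proper face, i.e.\ $\bd f_\ast$ is an interior homomorphism.

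The genuinely structural step is the last one, where ``$\ol f$ has trivial kernel downstairs'' is converted into ``$\ol f^\vee$ is interior upstairs'': this is where the duality package---the face--annihilator correspondence together with Lemma~\ref{L:nontriv_dual}---does the work. Everything else is bookkeeping, the only real care being to keep the partition $\B_{M,x}=\cN(F,x)\sqcup\cT(F,x)$, and the distinction between $\B$ and $\bB=\B\sqcup0$, straight, so that ``tangential'' and ``normal'' stay the two halves cut out by $\iota_F^\ast$.
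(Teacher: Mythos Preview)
Your proof is correct and follows essentially the same route as the paper: show that $f^\ast$ respects the partition $\B=\cN\sqcup\cT$ (the paper argues this by direct contradiction from the defining property of $G=\cF(f)(F)$, while you phrase it via the commuting square $f\circ\iota_F=\iota_G\circ(f\rst F)$, which is the same thing), descend to the quotient, and read off interiority of the dual. The one substantive addition is that you actually prove the equivalence ``$\ol f$ has no nontrivial face in its kernel $\iff$ $\ol f^\vee$ is interior'' using the face--annihilator bijection and Lemma~\ref{L:nontriv_dual}, whereas the paper simply asserts it; this is a welcome clarification rather than a different approach.
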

\begin{proof}
Let $x \in F$. Then
\begin{equation}
	f^\ast : \B_{N,f(x)} \to \B_{M,x}
	\label{E:f_ast_on_B}
\end{equation}
and we claim that $f^\ast$ maps $\cT(G,f(x))$ into $\cT(F,x)$ and $\cN(G,f(x))$
into $\cN(F,x)$.
By contradiction,
suppose that $[b] \in \cT(G,f(x))$ has image in $\cN(F,x)$. This means that $b
\rst G$ is a nontrivial (locally defined) b-function, but that $(f^\ast b) \rst
F \equiv 0$, which contradicts the defining property of $G = \cF(f)(F)$,
namely that $f : F \to G$ is an interior b-map.
Likewise, suppose that $[b] \in \cN(G,f(x))$ has image in $\cN(F,x)$. This means
that $b \rst G = 0$ but that $(f^\ast b) \rst F$ is nontrivial, which contradicts the
fact that $f(F) \subset G$.

It follows that $f^\ast$ descends to a monoid homomorphism 
\[
	f^\ast : \B_{N,f(x)}/\cT(G,f(x))  \to \B_{M,x}/\cT(F,x)
\]
with no nontrivial face in its kernel. The latter property is equivalent 
to the property that the dual homomorphism, $\bd f_\ast : \bM_x F \to \bM_{f(x)} G$, 
is an interior homomorphism.
\end{proof}

If $G$ and $F$ are two boundary faces of $M$ with $G \leq F$, then for all $x
\in G$ we have a natural homomorphism
\begin{equation}
	\bM_x F \to \bM_x G
	\label{E:mon_inclusion}
\end{equation}
coming from the fact that $\cT(G,x) \subset \cT(F,x) \subset \B_{M,x}$.

\begin{prop}
For all $x \in G \leq F$, the homomorphism \eqref{E:mon_inclusion} is an isomorphism
onto a boundary face of $\bM_x G$.

As a consequence, the bundles $\bM F \to F$ and $\bN F \to F$ are canonically
trivial for all $F \in \cF(M)$.
\label{P:pre_mon_cmplx}
\end{prop}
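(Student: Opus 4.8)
The plan is to prove the first assertion by a local computation with the duality of \S\ref{S:duality}, and then to deduce the triviality of the bundles from it together with the local constancy of $\bM F$.

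For the first assertion: since \eqref{E:mon_inclusion} is defined fiberwise, it suffices to work in a chart around a point $x \in G$ in which $M = X_P$, $F = X_S$ and $G = X_T$ for faces $T \leq S \leq P$. By Proposition~\ref{P:bN_local} there are canonical identifications $\B_{M,x}/\cT(F,x) \cong P/S$ and $\B_{M,x}/\cT(G,x) \cong P/T$ carrying the quotient map $\B_{M,x}/\cT(G,x) \to \B_{M,x}/\cT(F,x)$ (induced by $\cT(G,x) \subset \cT(F,x)$) to the natural surjection $\pi : P/T \to P/S$. Since $T \leq S$, the image $S/T$ of $S$ in $P/T$ is a face, and $P/S \cong (P/T)/(S/T)$ by the universal properties of the quotients, so $\pi$ is quotient by a face. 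Dualizing, \eqref{E:mon_inclusion} becomes $\pi^\vee : (P/S)^\vee \to (P/T)^\vee$, and the natural isomorphism $(Q/R)^\vee \cong R^\perp$ of \S\ref{S:duality}, applied with $Q = P/T$ and $R = S/T$, identifies $\pi^\vee$ with the inclusion of the face $(S/T)^\perp$ into $(P/T)^\vee$. Hence \eqref{E:mon_inclusion} is an isomorphism of $\bM_x F$ onto a face of $\bM_x G$.

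For the consequence I would first record two facts. (a) $\bM F \to F$ is locally constant: the sheaf on $F$ with stalk $\B_{M,x}/\cT(F,x)$ at $x$ is the constant sheaf $\underline{P/S}$ on each chart (by the rule $[x_p] \mapsto [p]$ from the proof of Proposition~\ref{P:bN_local}), so its fiberwise dual $\bM F$ is locally trivial with locally constant transition maps. (b) The extremal rays of the fiber $\bM_x F \cong W(F)^\vee$ are in canonical (chart-independent) bijection with the fixed finite set $\set{H \in \cF_1(M) : F \leq H}$: dualizing as above, an extremal ray of $\bM_x F = (\B_{M,x}/\cT(F,x))^\vee$ corresponds to a facet of $\B_{M,x}/\cT(F,x)$, hence to a codimension-one face of $\B_{M,x}$ containing $\cT(F,x)$, hence, by the local classification of faces in terms of boundary faces, to a codimension-one $H \in \cF_1(M)$ with $F \leq H$ (note $x \in H$ automatically since $F \subseteq H$). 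Now fix $x_0 \in F$; by (a), parallel transport along loops gives a monodromy action of $\pi_1(F, x_0)$ on $\bM_{x_0} F$ by monoid automorphisms, and by (b) each such automorphism fixes every extremal ray of $\bM_{x_0} F$. But a monoid automorphism of a sharp toric monoid that fixes each extremal ray is the identity: it fixes the primitive generator of each ray, and these span the group completion. So the monodromy is trivial, $\bM F$ is canonically trivial (with canonical fiber $W(F)^\vee$), and $\bN F = \bM F \otimes_\bbN \bbR$ is likewise canonically trivial.

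The main obstacle is the bookkeeping around the identifications in both parts: checking that \eqref{E:mon_inclusion} is exactly $\pi^\vee$ under Proposition~\ref{P:bN_local}, that $(P/T)/(S/T) \cong P/S$, and above all that the extremal-ray/hypersurface correspondence in (b) is genuinely canonical, so that it is preserved by parallel transport and the monodromy argument goes through. Once these identifications are pinned down, the remaining steps — the duality input and the final lemma on automorphisms of sharp toric monoids — are routine.
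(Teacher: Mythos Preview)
Your proof is correct and follows essentially the same approach as the paper: the local identification of \eqref{E:mon_inclusion} with the inclusion of $(S/T)^\perp$ into $(P/T)^\vee$ is the same as the paper's identification with $S^\perp \leq T^\perp$ inside $P^\vee$ (under the isomorphism $(P/T)^\vee \cong T^\perp$), and your monodromy argument via the canonical labeling of extremal rays by hypersurfaces $H \in \cF_1(M)$ with $F \leq H$ is exactly the paper's argument that transition maps preserve the labeling of the dimension-$1$ faces of $\bM_x F$ coming from the inclusions $\bM_x H \hookrightarrow \bM_x F$, hence are trivial. The only cosmetic difference is that the paper first notes $\bM H$ is trivial for $\codim(H)=1$ (since $\Aut(\bbN)=0$) and then invokes the first part of the proposition to obtain the labeling, whereas you obtain the labeling directly by dualizing facets of $W(F)$; your phrasing in terms of $\pi_1(F,x_0)$ is equivalent to the paper's in terms of transition functions.
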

\begin{proof}
In any chart $O \subset X_P$ at $x$, $G$ and $F$ coincide with $X_T$ and
$X_S$ for some $T \leq S \leq P$, and then \eqref{E:mon_inclusion} is
identified via Proposition~\ref{P:bN_local} with the inclusion
\[
	(P/S)^\vee \cong S^\perp \leq T^\perp \cong (P/T)^\vee
\]
of faces of $P^\vee$.

For the second statement, first note that if $\codim(H) = 1$, then $\bM_x H =
\bbN$ is the unique toric monoid of dimension 1, and since $\Aut(\bbN) = 0$,
$\bM H \cong H\times \bbN$ has a unique trivialization.
For a general $F$, the inclusions $\bM_x H \to \bM_x F$ for all $H
\in \cF_1(M)$ with $F \leq H$ determine a labeling of the dimension 1 faces of
each $\bM_x F$, and the transition maps for the bundle $\bM F$ must be
consistent with these labelings. From Proposition~\ref{P:monoid_as_cone}, it is
easy to see that any automorphism of a toric monoid fixing the faces of
dimension 1 is trivial, so the local trivializations of $\bM F$ patch
together to form a global trivialization.
\end{proof}

It follows from this result that $\bM F$ may be identified with the trivial
monoid bundle
\[
	\bM F \cong F\times W(F)^\vee \to F,
\]
and for $G \leq F$, the map \eqref{E:mon_inclusion}, which must be independent
of $x \in G$, identifies $W(F)^\vee$ with a face of $W(G)^\vee$.
Putting this together with Proposition~\ref{P:differential_normal}, for a map
$f : M \to N$, the morphism \eqref{E:bdf_M}, which is locally constant on the
fibers, reduces to
\[
	\bd f_\ast \cong f \times f_\natural : F\times W(F)^\vee
	\to H \times W(H)^\vee,
	\quad H = \cF(f)(F)
\]
where $f_{\natural,F} : W(F)^\vee \to W(H)^\vee$ is a fixed homomorphism.
This is the principal motivation for the notion of monoidal complexes, which we
discuss next.

\subsection{Monoidal complexes} \label{S:mon_cplx}
A {\bf monoidal precomplex} is a category $\cP$ whose objects are monoids and
whose arrows, which we denote by
\begin{equation}
	Q \hto P
	\iff Q \stackrel \cong \to S \leq P,
	\label{E:mon_complex_arrow}
\end{equation}
are (injective) homomorphisms which factor as an isomorphism and the inclusion
of a face.
A precomplex is a {\bf monoidal complex} if for each object $P \in \cP$ and
face $S \leq P$, there exists a unique object $Q \in \cP$ and arrow $Q \hto P$
which is an isomorphism onto $S$.
It is sometimes convenient to identify $S$ and $Q$, writing simply $Q \leq P$,
but one must be careful since there may be multiple arrows $Q \hto P$ mapping
onto distinct faces.
A {\bf subcomplex} of a monoidal complex $\cP$ is a subcategory $\cS \subset
\cP$ which is also a monoidal complex.
If all monoids in a complex $\cP$ are sharp, then there is a unique initial
object $0 \in \cP$.

A {\bf morphism of monoidal complexes}, denoted
\[
	\phi : \cP \to \cQ,
\]
is, for each $P \in \cP$, an interior homomorphism $\phi_P : P \to Q$ (recall
that this means $\phi_P(P^\circ) \subset Q^\circ$) for some $Q \in \cQ$, such
that all the diagrams
\[
\begin{tikzcd}[column sep=small, row sep=small]
P \ar{r}{\phi_P} & Q \\ 
S \ar{r}{\phi_S} \ar[rightharpoonup]{u} & T \ar[rightharpoonup]{u}
\end{tikzcd}
\]
commute, where the vertical maps are arrows in $\cP$ and $\cQ$, respectively.
If $\cS$ is a subcomplex of $\cQ$, there is a canonical morphism $\cS \to \cQ$
with each homomorphism given by the identity.
If $\cS \subset\cQ$ is a subcomplex and $\phi : \cP \to \cS$ a morphism, then
\[
	\cP \rst \cS = \set{P \in \cP : \phi_P : P \to S,\ \text{for some}\ S \in \cS} \subset \cP
\]
is a subcomplex of $\cP$ and $\phi : \cP \rst \cS \to \cS$ is a morphism.

\begin{ex}
Fix a monoid $P$. The basic example of a monoidal complex is the set
\[
	\cP_P = \set{S : S \leq P}
\]
of faces of $P$ with arrows given by the inclusion homomorphisms.
Any subset $\cS \subset \cP_P$ for which $T \leq S$, $S \in \cS$ implies $T \in
\cS$ is a subcomplex.

A homomorphism $f : P \to Q$ determines a unique morphism $\phi_f : \cP_P \to
\cP_Q$ of monoidal complexes by the requirement that the $\phi_S$ be interior,
and vice versa.
\label{X:monoid_as_complex}
\end{ex}

It is convenient to identify the monoidal complex $\cP_P$ with $P$ itself,
which we shall do from now on when no confusion can arise.

\begin{rmk}
A monoidal complex is closely related to Kato's notion of a `fan' \cite{kato}. See
the discussion in \S\ref{S:commentary}.
\end{rmk}

We may summarize the observations at the end of the previous section in the
following Theorem.

\begin{thm}
For every connected, finite dimensional manifold with generalized corners $M$, there
is a monoidal complex
\[
	\cP_M = \set{W(F)^\vee : F \in \cF(M),\ \bM F \cong F\times W(F)^\vee}
\]
indexed by boundary faces of $M$, and every interior b-map $f : M \to N$ induces
a morphism
\[
	f_\natural : \cP_M \to \cP_N.
\]
The association $M \mapsto \cP_M$, $f \mapsto f_\natural$ is a functor from the
category of manifolds with generalized corners and interior b-maps to the
category of monoidal complexes.
\label{T:P_M}
\end{thm}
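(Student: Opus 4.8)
The plan is to assemble the functor $M \mapsto \cP_M$ from the structural results established in \S\ref{S:normal}, reducing essentially everything to bookkeeping. First I would verify that $\cP_M$ is indeed a monoidal complex in the sense of \S\ref{S:mon_cplx}: the objects are the monoids $W(F)^\vee$ for $F \in \cF(M)$, and for $G \leq F$ the arrow $W(F)^\vee \hto W(G)^\vee$ is the face inclusion supplied by Proposition~\ref{P:pre_mon_cmplx} (via the canonical trivialization $\bM F \cong F \times W(F)^\vee$). Composability is immediate: if $H \leq G \leq F$, then on $\bM$ the homomorphisms \eqref{E:mon_inclusion} compose, since they all arise from the chain of inclusions $\cT(H,x) \subset \cT(G,x) \subset \cT(F,x)$ of tangential submonoids inside $\B_{M,x}$. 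For the ``complex'' condition, given $F \in \cF(M)$ and a face $\sigma \leq W(F)^\vee$, I would show there is a unique $G \leq F$ with $W(G)^\vee$ mapping isomorphically onto $\sigma$: working in a chart $O \subset X_P$ in which $F = X_S$, faces of $W(F)^\vee \cong S^\perp$ correspond via the duality of \S\ref{S:duality} to faces $T \leq S \leq P$, hence to boundary faces $X_T$ of $X_S$; because boundary faces of $M$ are assumed embedded, these patch to a unique global $G \leq F$, and the matching of arrows is forced by the labeling-of-edges argument in the proof of Proposition~\ref{P:pre_mon_cmplx}.

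Next I would construct $f_\natural : \cP_M \to \cP_N$ for an interior b-map $f : M \to N$. For each $F \in \cF(M)$, set $H = \cF(f)(F)$ as in \eqref{E:f_sharp}; Proposition~\ref{P:differential_normal} gives the monoid-bundle morphism $\bd f_\ast : \bM F \to \bM H$ which, by the discussion at the end of \S\ref{S:normal}, is fiberwise constant and reduces to a fixed \emph{interior} homomorphism $f_{\natural,F} : W(F)^\vee \to W(H)^\vee$. This is exactly the data of a morphism of monoidal complexes, provided the compatibility squares commute: for $G \leq F$, writing $K = \cF(f)(G)$, I must check that the diagram with top row $f_{\natural,F} : W(F)^\vee \to W(H)^\vee$, bottom row $f_{\natural,G} : W(G)^\vee \to W(K)^\vee$, and vertical face inclusions, commutes (note $H \leq K$ by Proposition~\ref{P:b-maps_stratified}, or rather by the last line of its proof applied with the roles of $G, F$). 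This follows because all four maps are restrictions of the single homomorphism $f^\ast : \B_{N, f(x)} \to \B_{M,x}$ (for $x \in G^\circ$) passed to the relevant quotients $\B/\cT$ and dualized; the commutativity is then just the naturality of these quotient and duality operations with respect to the inclusions $\cT(K,\cdot) \subset \cT(H,\cdot)$ and $\cT(F,\cdot) \subset \cT(G,\cdot)$.

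Finally I would check functoriality. The assignment $M \mapsto \cP_M$ on objects and $f \mapsto f_\natural$ on morphisms preserves identities trivially: $\mathrm{id}_M$ induces $\cF(\mathrm{id}_M) = \mathrm{id}$ on $\cF(M)$ and $\bd(\mathrm{id}_M)_\ast = \mathrm{id}$, so each $(\mathrm{id}_M)_{\natural,F}$ is the identity of $W(F)^\vee$. For composition $g \circ f : M \to N \to L$, one has $\cF(g\circ f) = \cF(g) \circ \cF(f)$ directly from \eqref{E:f_sharp} (compose the interior b-maps $F^\circ \to H^\circ \to J^\circ$), and $\bd(g\circ f)_\ast = \bd g_\ast \circ \bd f_\ast$ from the chain rule for the pullback action on the sheaves $\C$ and $\B$ used to define $\bd f_\ast$ in \S\ref{S:tangent}; restricting to $\bM$ and passing to the fiberwise-constant homomorphisms gives $(g\circ f)_{\natural,F} = g_{\natural,H}\circ f_{\natural,F}$, which is the composition law for morphisms of monoidal complexes.

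I expect the main obstacle to be the verification of the commuting-square condition for $f_\natural$ in the second step, specifically the claim that $H = \cF(f)(F) \leq K = \cF(f)(G)$ as boundary faces of $N$, together with the fact that the four homomorphisms involved all descend consistently from $f^\ast$ on the stalks of $\B$. The inclusion $H \leq K$ is stated only implicitly at the end of the proof of Proposition~\ref{P:b-maps_stratified}; I would spell it out by taking $x \in G^\circ \subset F$, noting $f(x) \in K^\circ$ and $f(x) \in \ol{H^\circ}$ (since $x \in \ol{F^\circ}$ and $f$ is continuous with $f(F^\circ)\subset H^\circ$), hence $K \leq H$ would be wrong---rather $H^\circ \subset \ol{K^\circ}$ forces $K \leq H$; one must be careful about the direction, and the safe route is to argue in a chart as in Proposition~\ref{P:b-maps_stratified}. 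Once the combinatorics of which face maps to which is pinned down, the commutativity itself is a formal consequence of naturality and contains no real content.
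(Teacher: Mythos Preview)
Your proposal is correct and matches the paper's approach exactly: the paper offers no separate proof of this theorem, stating only that it ``summarize[s] the observations at the end of the previous section,'' namely Propositions~\ref{P:differential_normal} and~\ref{P:pre_mon_cmplx} together with the discussion of the constant homomorphisms $f_{\natural,F}$ that follows them. Your write-up simply spells out that bookkeeping.

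One small correction worth making before you finalize: in your commuting-square discussion you write ``note $H \leq K$,'' but the correct direction is $K \leq H$. With $G \leq F$ in $M$, $H = \cF(f)(F)$, $K = \cF(f)(G)$, pick $x \in G^\circ$; then $f(x) \in K^\circ$, while continuity and $G \subset F = \ol{F^\circ}$ give $f(x) \in \ol{H^\circ} = H = \bigsqcup_{L \leq H} L^\circ$, so $K^\circ$ meets $H$ and hence $K \leq H$. This is exactly the order-preservation stated (in different letters) at the end of the proof of Proposition~\ref{P:b-maps_stratified}. Your later paragraph correctly flags the directional ambiguity and lands on the right conclusion, but the parenthetical ``note $H \leq K$'' should be fixed, and the sentence ``$K \leq H$ would be wrong'' is a slip---$K \leq H$ is precisely what you want, and it is what makes the right-hand vertical arrow $W(H)^\vee \hto W(K)^\vee$ a face inclusion.
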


It is worthwhile to spell this out more explicitly for the model space $X_P$,
as we will make heavy use of this below.

\subsubsection{Monoidal complexes for model spaces} \label{S:mon_cmplx_models}
Let $P$ be a monoid, not necessarily sharp.
The boundary faces of $X_P$ are the model spaces $X_S$ for $S \leq P$, and we
have
\[
	W(X_S)^\vee = (P/S)^\vee \cong S^\perp \leq P^\vee
\]
For $T\leq S$, the inclusion $X_T \leq X_S$ of boundary faces induces the
arrow $W(X_S)^\vee \hto W(X_T)^\vee$ which is associated with the inclusion
\[
	W(X_S)^\vee \cong S^\perp \leq T^\perp \cong W(X_T)^\vee
\]
of faces of $P^\vee$.
In particular, $W(X_P)^\vee = \set 0$ and $W(X_{P^\times})^\vee = P^\vee$.
It follows that
\[
	\cP_{X_P} \cong P^\vee
\]
is identified with the complex of faces $\set{S^\perp : S \leq P}$ of the dual
monoid $P^\vee$.

For an open set $O \subset X_P$,
\[
	\cP_O = \set{S^\perp : O \cap X_S \neq \emptyset}
\]
is the subcomplex of $P^\vee$ consisting of monoids $S^\perp$ for which $O$
meets the boundary face $X_S$.
By making $O$ smaller if necessary, it is often convenient to assume that $O$
meets a unique minimal boundary face $X_S$; in particular $O \subset
X_{S^{-1}P}$. Then by replacing $P$ by $S^{-1}P$ if necessary, we may assume
that $O$ meets the minimal boundary face $X_{P^\gp}$, and then
\[
	\cP_O = \cP_{X_P} = P^\vee.
\]

If $f : O \subset X_P \to X_Q$ is a map, then $f_\natural : \cP_O
\to \cP_{X_Q}$ may be identified with a single homomorphism
\begin{equation}
	f_\natural : P^\vee \to Q^\vee.
	\label{E:f_natural_as_homomorphism}
\end{equation}
The image of this homomorphism is contained in the face $T^\perp \leq Q^\vee$,
where $X_T = \cF(f)(O \cap X_{P^\times})$ is the smallest boundary face
containing the image of the minimal boundary face of $O$.
In particular, the image, $f(O)$, of $O$ is contained in the open set
$X_{T^{-1}Q} \subset X_Q$, so by replacing $Q$ by $T^{-1} Q$ if necessary, we
can assume that $T = Q^\times$, and then \eqref{E:f_natural_as_homomorphism} is
an interior homomorphism.

Finally, we are in a position to relate this to the local coordinate form for
$f$.
\begin{lem}
Let $f : O \subset X_P \to X_Q$ be a map with $O \cap X_{P^\times} \neq
\emptyset$ and $\cF(f)(O \cap X_{P^\times}) = X_{Q^\times}$, and choose
generators $\set{p_i,\pm q_j : 1 \leq i \leq n,\,1 \leq j \leq m}$ for
$P$ and $\set{p'_i,\pm q'_j : 1 \leq i \leq n',\,1 \leq j \leq m'}$ for
$Q$ as in \eqref{E:gens_nonsharp}, with associated coordinates $(x,y)$ on $X_P$
and $(x',y')$ on $Q$, with respect to which $f$ is given in coordinates as in
\eqref{E:local_b_map} by
\[
	f(x,y) = \big(h(x,y)\,x^\mu, g(x,y)\big) = (x',y').
\]
Then $\mu \in \Mat(n\times n',\bbN)$ is the matrix representing the dual
homomorphism to \eqref{E:f_natural_as_homomorphism} with respect to the
generators $\set{p_1,\ldots,p_n}$ of $P^\sharp$ and $\set{p'_1,\ldots,p'_{n'}}$
of $Q^\sharp$:
\[
	f_\natural^\vee : Q^\sharp \to P^\sharp,
	\quad f_\natural^\vee(p'_j) = \sum_{i=1}^n \mu_{ij} p_i.
\]
\label{L:exponents}
\end{lem}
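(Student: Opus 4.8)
The plan is to trace through the definitions and show that the matrix $\mu$ appearing in the coordinate form \eqref{E:local_b_map} is exactly the matrix of the homomorphism $f_\natural^\vee : Q^\sharp \to P^\sharp$ dual to \eqref{E:f_natural_as_homomorphism}. First I would recall that, under the hypotheses ($O$ meets the minimal face and $\cF(f)$ sends it to the minimal face of $X_Q$), the induced morphism of monoidal complexes reduces to a single \emph{interior} homomorphism $f_\natural : P^\vee \to Q^\vee$, as established in \S\ref{S:mon_cmplx_models}. Dualizing, using $P^\sharp \cong (P^\vee)^\vee$ and $Q^\sharp \cong (Q^\vee)^\vee$ from \eqref{E:double_dual}, gives a homomorphism $f_\natural^\vee : Q^\sharp \to P^\sharp$; this is well-defined on the sharpenings since $f_\natural$ only ever saw $P^\sharp, Q^\sharp$ to begin with (the dual monoids are automatically sharp).

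Next I would identify $f_\natural$ concretely. By Proposition~\ref{P:differential_normal} and the discussion following Proposition~\ref{P:pre_mon_cmplx}, for $F = X_{P^\times}$ mapping to $G = X_{Q^\times}$, the homomorphism $f_\natural = f_{\natural, F} : W(F)^\vee \to W(G)^\vee$ is the dual of the descended pullback map $f^\ast : \B_{N, f(x)}/\cT(G, f(x)) \to \B_{M,x}/\cT(F,x)$, which by Proposition~\ref{P:bN_local} is a homomorphism $Q^\sharp \cong \B_{N,f(x)}^\sharp \to \B_{M,x}^\sharp \cong P^\sharp$. So $f_\natural^\vee$ is literally this pullback on b-functions modulo units. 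Now I would compute this pullback in coordinates. The generators $p'_j \in Q^\sharp$ correspond to the b-functions $x'_j = x_{p'_j}$ on $X_Q$, and from the coordinate form $f^\ast x'_j = h_j(x,y)\, x^\mu = h_j \prod_i x_i^{\mu_{ij}}$ with $h_j > 0$, we read off that modulo units (i.e., in $\B_{M,x}^\sharp = P^\sharp$) the class of $f^\ast x'_j$ is $\sum_i \mu_{ij} [x_i] = \sum_i \mu_{ij} p_i$. Hence $f_\natural^\vee(p'_j) = \sum_i \mu_{ij} p_i$, which is exactly the claim.

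The main thing to be careful about — and I expect this to be the only real obstacle — is bookkeeping about \emph{which} homomorphism dualizes to which, and making sure the identifications $P^\sharp \cong (P^\vee)^\vee$, $W(X_P)^\vee \cong P^\vee$, and $W(X_{P^\times})^\vee \cong P^\vee$ are threaded consistently, so that the arrow directions match: $f_\natural$ goes $P^\vee \to Q^\vee$ (covariant with $f$ on normal monoids, as in \eqref{E:bdf_M}), while its dual $f_\natural^\vee$ goes $Q^\sharp \to P^\sharp$ (contravariant, matching the pullback of functions $f^\ast$). Once the arrow directions are pinned down, the computation is the routine one above: the $y$-coordinates contribute nothing since $y_i, y_i^{-1}$ are units and were absorbed into the $h_j$, and the matrix of exponents of the $x_i$ in $f^\ast x'_j$ is by definition $\mu$. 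I would also note in passing that the exponent $\mu_{ij}$ is well-defined as an element of $\bbN$ (not merely $\bbZ$) because $f$ is a b-map into $\bbR_+ = X_\bbN$, consistent with $f_\natural^\vee$ landing in the monoid $P^\sharp$ rather than only in $P^\gp$.
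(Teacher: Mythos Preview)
Your proposal is correct and follows essentially the same approach as the paper: both identify $f_\natural$ with $\bd f_\ast$ on the normal monoid at a point of the minimal face, recognize its dual $f_\natural^\vee$ as the pullback $f^\ast$ on the sharpened stalks $\B^\sharp$, and then read off $f_\natural^\vee(p'_j) = \sum_i \mu_{ij}\,p_i$ from the coordinate expression $f^\ast x'_j = h_j\prod_i x_i^{\mu_{ij}}$ modulo units. Your extra care with the arrow directions and the identifications $(P^\vee)^\vee \cong P^\sharp$ is well placed but introduces nothing new beyond the paper's ``unwinding the definitions''.
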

\begin{proof}
This is a matter of unwinding the definitions.
The homomorphism \eqref{E:f_natural_as_homomorphism} is determined by
\[
	\bd f_\ast : \bM_{(0,y)} X_{P^\times} \cong P^\vee \to Q^\vee \cong \bM_{f(0,y)} X_{Q^\times}
\]
for any point $(0,y) \in X_{P^\times}$ in the minimal boundary face.
Then
we may identify $\bM_{(0,y)} X_{P^\times} \cong (P^\sharp)^\vee$ with the dual stalk
$(\B_{(0,y)}^\sharp)^\vee$ and likewise identify $\bM_{f(0,y)} X_{Q^\times} \cong
(Q^\sharp)^\vee$ with $(\B_{f(0,y)}^\sharp)^\vee$. The dual of the homomorphism
$\bd f_\ast$ is just the pull-back
\[
	f^\ast : \B_{f(y,0)}^\sharp \to \B_{(0,y)}^\sharp,
\]
sending $[x_{p_j'}]$ to $[h'_j\,\prod_i x_{p_i}^{\mu_{ij}}] = \prod_i [x_{p_i}]^{\mu_{ij}}$,
and we recall that the isomorphism $\B_{(0,y)}^\sharp \cong P^\sharp$ is given by
identifying $[x_p]$ with $p$.
\end{proof}

\subsubsection{Refinement} \label{S:refinement}

A {\bf saturated refinement} is a morphism $\psi :\cR \to \cP$ of monoidal
complexes with the following properties:
\begin{enumerate}
[{\normalfont (R1)}]
\item  \label{I:refinement_inj}
$\psi_R : R \to P$ is injective for all $R \in \cR$, and
\item \label{I:refinement_cover}
for each $P \in \cP$, there is a partition
\[
	P^\circ = \bigsqcup_{R \to P} \psi_R(R^\circ)
\]
of the interior of $P$ into the images of the interiors of all the $R \in \cR$
mapping to $P$.
\end{enumerate}
In other words, for any pair $R_1,R_2 \in \cR$ mapping to $P \in \cP$, their
images in $P$ may only intersect at a mutual face:
\[
	\psi_{R_1}(R_1) \cap \psi_{R_2}(R_2) = \psi_S(S) \subset P
	\ \text{for some}\  S \hto R_i,\ i = 1,2.
\]
A saturated refinement encodes the idea of consistently {\em subdividing} each
of the monoids $P$ of $\cP$ into toric submonoids meeting along mutual faces,
with the consistency condition that the induced subdivision of $S \leq P$
coincides with the subdivision of $Q$ in \eqref{E:mon_complex_arrow}.

In particular, in the case the $\cP = P$ is the complex associated to a single
monoid, a refinement $\cR \to P$ may be identified with a collection
$\set{R_i\subset P}$ of submonoids with $\dim(R_i) = \dim(P)$ subject to the
conditions that $P = \bigcup_i R_i$ and $R_1 \cap R_2 = S$ such that $S \leq
R_i$, $i = 1,2$.

The following is immediate.

\begin{prop}
Let $\cR \to \cP$ be a refinement and $\cS \subset \cP$ a subcomplex. Then $\cR
\rst \cS \to \cS$ is a refinement.
\label{P:restricting_refinements}
\end{prop}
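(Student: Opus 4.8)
The plan is to verify directly that $\psi$, restricted to $\cR \rst \cS$, satisfies the two defining conditions (R\ref{I:refinement_inj}) and (R\ref{I:refinement_cover}) of a saturated refinement, using that $\psi : \cR \to \cP$ does. For $R \in \cR$ write $\psi(R) \in \cP$ for the target of its structure homomorphism $\psi_R : R \to \psi(R)$, so that $\cR \rst \cS = \set{R \in \cR : \psi(R) \in \cS}$. I would first note that this is a subcomplex of $\cR$: an arrow $R' \hto R$ with $R \in \cR \rst \cS$ is carried by $\psi$ to an arrow $\psi(R') \hto \psi(R)$, and since $\cS$ is a subcomplex of $\cP$ and $\psi(R) \in \cS$, also $\psi(R') \in \cS$, i.e.\ $R' \in \cR \rst \cS$; and by construction $\psi$ restricts to a morphism $\cR \rst \cS \to \cS$, as in the general discussion of \S\ref{S:mon_cplx}.

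Condition (R\ref{I:refinement_inj}) is then immediate, as $\psi_R$ is injective for \emph{every} $R \in \cR$, a fortiori for every $R \in \cR \rst \cS$. For (R\ref{I:refinement_cover}), fix $S \in \cS$; since $\cS \subset \cP$, $S$ is an object of $\cP$, so (R\ref{I:refinement_cover}) for $\psi : \cR \to \cP$ provides a partition
\[
	S^\circ = \bigsqcup_{R \to S} \psi_R(R^\circ),
\]
the index ranging over those $R \in \cR$ whose structure homomorphism has target $S$. Every such $R$ has $\psi(R) = S \in \cS$, hence lies in $\cR \rst \cS$; and conversely these are exactly the $R \in \cR \rst \cS$ with target $S$. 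So the displayed partition is precisely the one required of $\psi : \cR \rst \cS \to \cS$ at $S$, which completes the verification.

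The proof is essentially bookkeeping, and the one point deserving a word — the ``obstacle'', such as it is — is the claim that restricting from $\cR$ to $\cR \rst \cS$ discards no object relevant to the partition of $S^\circ$ for $S \in \cS$. This holds because $\psi_R$ is interior, so $\psi_R(R^\circ) \subseteq \psi(R)^\circ$, and the interiors of distinct faces of a monoid are disjoint; hence an object $R \in \cR$ can contribute a nonempty piece to $S^\circ$ only when $\psi(R) = S$, and then $\psi(R) = S \in \cS$ forces $R \in \cR \rst \cS$. It is likewise routine that disjointness of the pieces $\psi_R(R^\circ)$, and the description of their pairwise intersections as images of mutual faces, are inherited verbatim, again using closure of $\cS$ under faces.
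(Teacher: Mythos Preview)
Your proof is correct. The paper does not actually prove this proposition---it is stated as ``immediate'' without argument---so your verification of (R\ref{I:refinement_inj}) and (R\ref{I:refinement_cover}) simply spells out in detail what the paper leaves to the reader; the key observation you isolate (that only $R$ with $\psi(R) = S$ contribute to the partition of $S^\circ$, so nothing is lost on restriction) is exactly the content of ``immediate'' here.
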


\begin{rmk}
There is a weaker notion of not necessarily saturated refinement (see
\cite{KM}), wherein the condition (R\ref{I:refinement_cover}) is replaced by an
analogous condition on the polyhedral cones supporting $P$ as in
Proposition~\ref{P:monoid_as_cone}, but where the images $\psi_R(R)$ need not
be saturated submonoids in $P$. For example, the map $\bbN \to \bbN$, $n
\mapsto 2n$ is a refinement which is not saturated.

It may be possible to generalize the notion of blow-up developed below to
non-saturated refinements, as was done for smooth refinements in
\cite{KM}. However, the technical machinery needed to implement this is quite
non-algebraic, and will not be considered here.
\end{rmk}

\medskip\noindent
From this point on,
{\em refinement} will mean {\em saturated
refinement}.
\medskip

\section{Blow-up} \label{S:blow-up}
\subsection{Blow-up of model spaces} \label{S:blow-up_models}
Fix a sharp monoid $P$ and a refinement $\psi : \cR \to P^\vee \cong
\cP_{X_P}$.
Say $R \in \cR$ is {\em maximal} if $\dim(R) = \dim(P^\vee)$.

For each maximal $R$, the dual homomorphism $\psi_R^\vee : P \to R^\vee$
determines an interior b-map
\begin{equation}
	X_{R^\vee} \to X_P,
	\label{E:refinement_map_on_models}
\end{equation}
such that $X_{R^\vee}^\circ \to X_P^\circ$ is a diffeomorphism.
We proceed below to glue together the spaces $X_{R^\vee}$ together to form a
new manifold mapping to $X_P$.

We identify each $R \in \cR$ its image in $P^\vee$, thus
\[
	P^\vee = \bigcup_{R_i\text{ max'l}} R_i,
	\quad R_1 \cap R_2 = Q, \quad Q\leq R_i,\ i = 1,2.
\]

\begin{lem}
Let $R_1,R_2 \in \cR$ be maximal, with $Q = R_1 \cap R_2$.
Then there is a natural diffeomorphism
\begin{equation}
	X_{(Q^\perp)^{-1}R_1^\vee} \cong X_{(Q^\perp)^{-1}R_2^\vee}
	\label{E:local_diffeos}
\end{equation}
between the dense open sets $X_{(Q^\perp)^{-1}R_i^\vee} \subset X_{R_i^\vee}$, $i = 1,2$,
which is consistent with the maps \eqref{E:refinement_map_on_models}.
\label{L:gluing_maps}
\end{lem}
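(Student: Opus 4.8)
The plan is to work throughout in the cone picture of Proposition~\ref{P:monoid_as_cone} and to reduce the lemma to a single standard duality identity for polyhedral cones.

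\emph{Setup.} Put $L = (P^\vee)^\gp$, $L^\ast = \Hom(L;\bbZ)$, and realize $P^\vee = \sigma\cap L$ for a strongly convex rational polyhedral cone $\sigma\subset L_\bbR$ (strongly convex since $P^\vee$ is sharp). Then, as $P$ is sharp, \eqref{E:double_dual} gives $P = (P^\vee)^\vee = \sigma^\vee\cap L^\ast$ and $P^\gp = L^\ast$, where $\sigma^\vee\subset L^\ast_\bbR$ is the dual cone. In this language the refinement $\psi:\cR\to P^\vee\cong\cP_{X_P}$ amounts to a subdivision of $\sigma$, with each $R\in\cR$ of the form $\tau\cap L$ for a cone $\tau$ of the subdivision, and (by the explicit description following the definition of refinement) $R$ maximal exactly when $\tau$ is full dimensional; in that case $R^\gp = L$. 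So I may take $R_i = \tau_i\cap L$ with $\tau_1,\tau_2$ full dimensional (hence strongly convex), $\rho := \tau_1\cap\tau_2$ a common face of both, and $Q = R_1\cap R_2 = \rho\cap L$. Dualizing, $R_i^\vee = \tau_i^\vee\cap L^\ast$ with $(R_i^\vee)^\gp = L^\ast$, and \eqref{E:refinement_map_on_models} becomes the model map $X_{R_i^\vee}\to X_P$ induced by the inclusion of submonoids $\sigma^\vee\cap L^\ast\hookrightarrow\tau_i^\vee\cap L^\ast$ (an inclusion because $\tau_i\subseteq\sigma$).

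\emph{Computing the open set.} The face $Q^\perp\leq R_i^\vee$ is $(\tau_i^\vee\cap\rho^\perp)\cap L^\ast$, with $\rho^\perp\subset L^\ast_\bbR$ the linear annihilator of $\rho$. Since $\rho$ is a \emph{face} of the full dimensional cone $\tau_i$, its dual face $\tau_i^\vee\cap\rho^\perp$ has dimension $\dim\rho^\perp$, so $Q^\perp$ spans $\rho^\perp$ and the minimal sublattice of $L^\ast$ containing it is $\rho^\perp\cap L^\ast$. By the realization of localization at the end of \S\ref{S:toric}, $(Q^\perp)^{-1}R_i^\vee$ is then the submonoid of $L^\ast$ generated by $\tau_i^\vee\cap L^\ast$ and $\rho^\perp\cap L^\ast$, namely $(\tau_i^\vee + \rho^\perp)\cap L^\ast$. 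At this point the lemma reduces to the identity
\[
  \tau_i^\vee + \rho^\perp = \rho^\vee,
\]
valid whenever $\rho$ is a face of $\tau_i$: one inclusion is immediate ($\tau_i^\vee\subseteq\rho^\vee$ and $\rho^\perp\subseteq\rho^\vee$); for the other, dualize, using $(\tau_i^\vee + \rho^\perp)^\vee = \tau_i\cap\mathrm{span}(\rho) = \rho$ (the last equality because $\rho$ is a face of $\tau_i$) together with the fact that $\tau_i^\vee + \rho^\perp$ is a closed (polyhedral) cone, hence equals its bidual. Granting this, $(Q^\perp)^{-1}R_i^\vee = \rho^\vee\cap L^\ast$, which depends only on $\rho = \tau_1\cap\tau_2$ — hence only on $Q$, and not on $i$.

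\emph{Conclusion.} The identity diffeomorphism of $X_{\rho^\vee\cap L^\ast}$ with itself is then the asserted map \eqref{E:local_diffeos}, ``natural'' precisely because $\rho^\vee\cap L^\ast$ is canonically attached to $Q$. Consistency with \eqref{E:refinement_map_on_models} is automatic: by Proposition~\ref{P:U_S} the open inclusion $X_{(Q^\perp)^{-1}R_i^\vee}\hookrightarrow X_{R_i^\vee}$ is the model map induced by $\tau_i^\vee\cap L^\ast\hookrightarrow\rho^\vee\cap L^\ast$, so composing with $X_{R_i^\vee}\to X_P$ (induced by $\sigma^\vee\cap L^\ast\hookrightarrow\tau_i^\vee\cap L^\ast$) gives, by functoriality of $X_\bullet$, the map induced by $\sigma^\vee\cap L^\ast\hookrightarrow\rho^\vee\cap L^\ast$ — the same for $i=1$ and $i=2$.

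\emph{Expected obstacle.} The only real friction I anticipate is the lattice bookkeeping behind ``$(Q^\perp)^{-1}R_i^\vee = (\tau_i^\vee + \rho^\perp)\cap L^\ast$'': that $Q^\perp$ generates the \emph{full} sublattice $\rho^\perp\cap L^\ast$ (the codimension count for dual faces of a full dimensional cone) and that adjoining this sublattice to the saturated monoid $\tau_i^\vee\cap L^\ast$ produces exactly the saturated sum $(\tau_i^\vee + \rho^\perp)\cap L^\ast$, together with the displayed duality identity. None of this is deep, but it wants care. Alternatively, one can run the whole argument monoid-theoretically, identifying $(Q^\perp)^{-1}R_i^\vee$ with $\{u\in P^\gp : u|_Q\in\Hom(Q;\bbN)\}$ and using the natural isomorphism $R_i^\vee/Q^\perp\cong Q^\vee$ coming from the dual exact sequences of \S\ref{S:duality} in place of the cone identity; I would present it in whichever form turns out shorter.
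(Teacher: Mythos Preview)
Your argument is correct and takes a genuinely different route from the paper's. The paper works directly with monoid elements inside $L^\vee = P^\gp$: it writes out $(Q^\perp)^{-1}R_i^\vee = \{r+q : r(R_i)\subset\bbN,\ q(Q)=0\}$ and proves the inclusion $(Q^\perp)^{-1}R_1^\vee \subset (Q^\perp)^{-1}R_2^\vee$ by an explicit decomposition of a given element $s$ using a finite generating set for $R_2$ together with Lemma~\ref{L:nontriv_dual} to produce correcting elements in $Q^\perp$; the reverse inclusion follows by symmetry. You instead pass to the cone picture and identify \emph{both} localizations with a third, canonical object $\rho^\vee\cap L^\ast$ via the standard toric identity $\tau_i^\vee+\rho^\perp=\rho^\vee$. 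Your approach is more conceptual and makes the naturality transparent; the paper's is more elementary and entirely self-contained within the monoid formalism.

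The ``lattice gap'' you flag is real and is exactly where the paper's generator argument does its work: the equality $(\tau_i^\vee\cap L^\ast)+(\rho^\perp\cap L^\ast)=(\tau_i^\vee+\rho^\perp)\cap L^\ast$ is \emph{not} automatic (the sum of a saturated monoid and a sublattice need not be saturated in general). One clean way to close it, avoiding generators, is to observe that the inclusion $(Q^\perp)^{-1}R_i^\vee\hookrightarrow\rho^\vee\cap L^\ast$ is an isomorphism on units (both equal $\rho^\perp\cap L^\ast$) and on sharpenings (both equal $Q^\vee$, the left via $R_i^\vee/Q_i^\perp\cong Q^\vee$ from the dual exact sequences of \S\ref{S:duality}, the right since any $f\in Q^\vee$ lifts from $L^\ast/(\rho^\perp\cap L^\ast)$ to an element of $\rho^\vee\cap L^\ast$); an inclusion of toric monoids inducing isomorphisms on units and sharpenings is an equality. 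Either this or the paper's Lemma~\ref{L:nontriv_dual} argument should be spelled out in your write-up.
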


\begin{proof}
This follows from an isomorphism $(Q^\perp)^{-1} R_1^\vee \cong
(Q^\perp)^{-1}R_2^\vee$, which, having identified $R_i$ with their images in
the lattice $L := (P^\vee)^\gp$ containing $P^\vee$, takes the form of an
equality of monoids
\begin{equation}
	(Q^\perp)^{-1} R_1^\vee =  (Q^\perp)^{-1}R_2^\vee \subset L^\vee
	\label{E:gluing_equality}
\end{equation}
in the dual lattice $L^\vee = \Hom(L; \bbZ) \cong P^\gp$.
In this lattice we have
\begin{align*}
	R_i^\vee &= \set{r \in L^\vee : r(R_i) \subset \bbN},
	\\ Q_i^\perp &= \set{q \in L^\vee : q(R_i) \subset \bbN, \ q(Q) = 0},
	\\ (Q_i^\perp)^\gp = (Q^\perp)^\gp &= \set{q \in L^\vee : q(Q) = 0},
	\\ (Q^\perp)^{-1}R_i^\vee &= \set{r + q  \in L^\vee : r(R_i) \subset \bbN, q(Q) = 0}.
\end{align*}
(We have to distinguish between the faces $Q_i^\perp \leq R_i^\vee$ since
$Q_1^\perp \neq Q_2^\perp$; however their group completions are the same.) To
prove \eqref{E:gluing_equality}, it suffices to show that
$(Q^\perp)^{-1}R_1^\vee \subset (Q^\perp)^{-1}R_2^\vee$ by symmetry, so
consider an element $s$ in the first set.
This has the property that $s(Q) \subset \bbN$.
Let $\set{p_1,\ldots,p_n}$ be a finite set of generators for $R_2$, and suppose
that $\set{p_1,\ldots,p_k} \subset R_2 \setminus Q$ with
$\set{p_{k+1},\ldots,p_n} \subset Q$.
Define the integers $l_i = s(p_i) \in \bbZ$ for $i = 1,\ldots,k$.
For each $i$, if $l_i < 0$, choose $q_i \in Q^\perp$ such that $q_i(p_i) \geq
-l_i$ (such an element exists by Lemma~\ref{L:nontriv_dual}), otherwise set
$q_i = 0$.
Then
\begin{gather*}
	s = r + q \in (Q^\perp)^{-1}R_2^\vee, \ \text{where}
	\\ r = s + \textstyle \sum_i q_i \in R_2^\vee,
	\quad \text{and} \quad q = s - r = - \textstyle \sum_i q_i \in (Q^\perp)^\gp.
\end{gather*}

The consistency of \eqref{E:local_diffeos} with
\eqref{E:refinement_map_on_models} follows from the obvious commutativity of
the two homomorphisms $P \to (Q^\perp)^{-1} R_1^\vee = (Q^\perp)^{-1}R_2^\vee$
through $R_1^\vee$ and $R_2^\vee$.
\end{proof}

The {\bf blow-up} of $X_P$ by $\cR$ is the push-out of the $X_{R^\vee}$ along
the sets $X_{(Q^\perp)^{-1}R^\vee}$ for maximal $R \in \cR$:
\begin{equation}
	[X_P; \cR] = \Big(\textstyle\bigcup_{R\text{ max'l}} X_{R^\vee} \Big)/ \sim
	\label{E:local_blow-up}
\end{equation}
where the equivalence relation $\sim$ is generated by the diffeomorphisms
\eqref{E:local_diffeos}: $X_{R_1^\vee} \ni x_1 \sim x_2 \in X_{R_2^\vee}$ if
$x_i \in X_{(Q^\perp)^{-1}R_i^\vee}$ and they are identified by such a
diffeomorphism.
The {\bf blow-down map}
\begin{equation}
	\beta : [X_P; \cR] \to X_P
	\label{E:local_blow_down}
\end{equation}
is well-defined by \eqref{E:refinement_map_on_models} and
Lemma~\ref{L:gluing_maps}.

\begin{prop}
The blow-up $[X_P; \cR]$ is a manifold whose monoidal complex is isomorphic to
$\cR$, and the blow-down map \eqref{E:local_blow_down} is an interior b-map
with $\beta_\natural$ coinciding with the refinement morphism $\psi : \cR \to P^\vee$.
Moreover, $\beta$ is a diffeomorphism from $[X_P; \cR]^\circ$ to $X_P^\circ$.
\label{P:local_blow-up_strucure}
\end{prop}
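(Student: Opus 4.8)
The plan is to verify the four assertions of the proposition in turn, all of which are essentially local checks on the gluing construction \eqref{E:local_blow-up}. First I would check that $[X_P;\cR]$ is a manifold with generalized corners: by construction it is covered by the charts $X_{R^\vee}$ for maximal $R$, and Lemma~\ref{L:gluing_maps} provides the diffeomorphisms \eqref{E:local_diffeos} relating the overlaps, so the only thing to confirm is that the resulting topological space is Hausdorff and second countable (second countability is automatic since the index set $\{R\text{ max'l}\}$ is finite and each $X_{R^\vee}$ is second countable), and that the transition maps are genuine diffeomorphisms in the sense of \S\ref{S:bmaps}, which is exactly what Lemma~\ref{L:gluing_maps} asserts. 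Hausdorffness I would obtain from property (R\ref{I:refinement_cover}) of the refinement: if two points of distinct charts $X_{R_1^\vee}, X_{R_2^\vee}$ cannot be separated, then they map to the same point of $X_P$ under $\beta$, and one traces through the partition of $P^\vee$ into the $\psi_{R}(R^\circ)$ to see that such points are already identified by the gluing, i.e.\ the overlap $X_{(Q^\perp)^{-1}R_i^\vee}$ is open and the identification is a homeomorphism on it.

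Next, the blow-down map \eqref{E:local_blow_down} is well-defined and an interior b-map because on each chart $X_{R^\vee}$ it is given by the interior b-map \eqref{E:refinement_map_on_models} induced by $\psi_R^\vee : P \to R^\vee$, and these agree on overlaps by the last paragraph of the proof of Lemma~\ref{L:gluing_maps}; being locally an interior b-map, it is one globally. For the claim $\beta : [X_P;\cR]^\circ \to X_P^\circ$ is a diffeomorphism, I would note that each $X_{R^\vee}^\circ = X_{(R^\vee)^\gp}$ maps diffeomorphically onto $X_P^\circ = X_{P^\gp}$ (stated right after \eqref{E:refinement_map_on_models}), and that by (R\ref{I:refinement_cover}) these interiors are all glued together into a single copy of $X_P^\circ$ — more precisely, the overlap $X_{(Q^\perp)^{-1}R_i^\vee}$ contains the full interior whenever $Q$ is a proper face, so all the $X_{R^\vee}^\circ$ are mutually identified and $\beta$ restricted to the interior is the identity on $X_P^\circ$ in suitable coordinates.

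For the statement about the monoidal complex, the key computation is that $\cP_{[X_P;\cR]} \cong \cR$ with $\beta_\natural \cong \psi$. I would identify the boundary faces of $[X_P;\cR]$ chart by chart: in the chart $X_{R^\vee}$ the boundary faces are the $X_S$ for faces $S \leq R^\vee$, whose normal monoids are $W(X_S)^\vee \cong S^\perp \leq (R^\vee)^{\vee} = R$ (using $R$ sharp, or its sharpening), by the model computation in \S\ref{S:mon_cmplx_models}. Running over all maximal $R$ and using that two faces in different charts get identified precisely when the corresponding submonoids of $L = (P^\vee)^\gp$ agree — which is exactly the content of the overlap description \eqref{E:gluing_equality} in Lemma~\ref{L:gluing_maps} — one recovers that the boundary faces of $[X_P;\cR]$ are indexed by the objects of $\cR$, with the face relations matching the arrows of $\cR$. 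That $\beta_\natural$ is $\psi$ then follows from Lemma~\ref{L:exponents}: on the chart $X_{R^\vee}$, the exponent matrix of $\beta$ is the matrix of $\psi_R^\vee : P \to R^\vee$, whose dual is $\psi_R : R \to P^\vee$, as claimed.

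The step I expect to be the main obstacle is verifying Hausdorffness of the push-out \eqref{E:local_blow-up} — i.e.\ confirming that non-separated points must already be identified by the gluing. This is where property (R\ref{I:refinement_cover}) is genuinely used, and it requires the observation that if $x_1 \in X_{R_1^\vee}$ and $x_2 \in X_{R_2^\vee}$ both lie over the same $y \in X_P$, then the supports of $x_1, x_2$ pick out interiors of faces of $R_1, R_2$ whose images in $P^\vee$ must coincide (by uniqueness in the partition), forcing $x_1, x_2$ into the common overlap $X_{(Q^\perp)^{-1}R_i^\vee}$ where $Q = R_1 \cap R_2$, and hence identified. Everything else is a matter of unwinding the model-space bookkeeping of \S\ref{S:models}--\S\ref{S:mon_cmplx_models} and citing Lemmas~\ref{L:gluing_maps} and~\ref{L:exponents}.
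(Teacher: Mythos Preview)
Your treatment of the chart structure, the identification $\cP_{[X_P;\cR]}\cong \cR$, the computation of $\beta_\natural$ via Lemma~\ref{L:exponents}, and the interior diffeomorphism all match the paper's proof and are correct.

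The Hausdorff argument, however, has a genuine gap. You claim that if $x_1\in X_{R_1^\vee}$ and $x_2\in X_{R_2^\vee}$ lie over the same $y\in X_P$, then they are forced into the common overlap $X_{(Q^\perp)^{-1}R_i^\vee}$ and hence identified. This is false: the blow-down map has nontrivial fibers, so ``same image under $\beta$'' cannot imply ``identified by the gluing''. Concretely, take $R_1\neq R_2$ maximal and let $x_i=\star\in X_{R_i^\vee}$ be the distinguished base points. Both map to $\star\in X_P$, but neither lies in $X_{(Q^\perp)^{-1}R_i^\vee}$ (since $x_i$ vanishes on all of $Q_i^\perp$), so they are \emph{not} identified; they are two distinct points of $[X_P;\cR]$ in the fiber over $\star$. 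Your support argument does not rescue this: from $\beta(x_1)=\beta(x_2)=y$ one only gets $(\psi_{R_i}^\vee)^{-1}(\supp(x_i))=\supp(y)$, which does not determine $\supp(x_i)$, and in the example just given the associated faces of $R_1,R_2$ are $R_1$ and $R_2$ themselves, whose images in $P^\vee$ do not coincide.

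The paper instead separates such points directly. Given $x_i\in X_{R_i^\vee}\setminus X_{(Q^\perp)^{-1}R_i^\vee}$, one uses the fact that $R_1$ and $R_2$ meet only along $Q$ to find $q\in (Q^\perp)^\gp\subset L^\vee$ with $q(R_1)\subset\bbN$, $q(R_2)\subset -\bbN$, and $q(Q)=0$; then $q\in Q_1^\perp$ and $-q\in Q_2^\perp$, so $x_q(x_1)=x_{-q}(x_2)=0$, while on the overlap $\{x_{-q}<\varepsilon\}$ becomes $\{x_q>\varepsilon^{-1}\}$. Hence $U_1=\{x_q<\varepsilon\}$ and $U_2=\{x_{-q}<\varepsilon\}$ are disjoint for $\varepsilon<1$. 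This is where property~(R\ref{I:refinement_cover}) is actually used: it guarantees the existence of such a separating functional $q$.
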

\begin{proof}
The space $[X_P; \cR]$ has an open cover by the charts $X_{R^\vee}$ with
diffeomorphic transition maps. 
To see that $[X_P; \cR]$ is Hausdorff, it suffices to show that two points $x_i
\in X_{R_i^\vee} \setminus X_{(Q^\perp)^{-1}R_i^\vee}$, $i = 1,2$ are separated
in the quotient \eqref{E:local_blow_down}. Note that $x_i \notin
X_{(Q^\perp)^{-1}R_i^\vee}$ means that $x_i(q) = 0$ for $q \in Q_i^\perp \leq
R_i^\vee$.

Since $R_1$ and $R_2$ only intersect along the mutual face $Q$, there is an
element $q \in (Q^\perp)^\gp \subset L^\vee$ such that $q(R_1) \subset \bbN$,
$q(R_2) \subset - \bbN$, and $q(Q) = 0$. In other words, $q \in Q_1^\perp$ and
$-q \in Q_2^\perp$.
Then 
\[
	x_1 \in U_1 = \set{x_q < \varepsilon} \subset X_{R_1^\vee},
	\quad x_2 \in U_2 = \set{x_{-q} < \varepsilon} \subset X_{R_2^\vee},
\]
since $x_q(x_1) = x_{-q}(x_2) = 0$.
However, in $X_{(Q^\perp)^{-1}R_2^\vee} \cong X_{(Q^\perp)^{-1}R_1^\vee}$, the
set $U_2$ is identified with the set $\set{x_q > \varepsilon^{-1}}$, so $U_1
\cap U_2 = \emptyset$ for $\varepsilon < 1$.

The boundary faces of $X_{R^\vee}$ are the subspaces $X_{T^\perp} \subset
X_{R^\vee}$ for all $T \leq R$, which is to say that $\cP_{X_R^\vee} = \set{ T
: T \leq R} \subset \cR$.
However, if $T$ is a mutual face of both $R_1$ and $R_2$, then the interiors
$X_{T^\perp}^\circ \subset X_{R_i^\vee}$ are identified by
\eqref{E:local_diffeos}. In particular there is a bijection between $T \in \cR$
and faces of $[X_P; \cR]$, and it follows that $\cP_{[X_P; \cR]} \cong \cR$.

That $[X_P; \cR]^\circ \cong X_P^\circ$ follows from $X_{R^\vee}^\circ \cong
X_P^\circ$ and the fact that $X_{R^\vee}^\circ \subset
X_{(Q^\perp)^{-1}R^\vee}$ for each $Q \leq R$.
\end{proof}

\subsubsection{Blow-up and localization} \label{S:blow-up_localization}

Next we investigate how the blow-up behaves with respect to passing to the open
subsets $X_{S^{-1}P} \subset X_P$ for $S \leq P$.

From $(P/S)^\vee \cong S^\perp \leq P^\vee$, we may regard $(P/S)^\vee \subset
P^\vee$ as a monoidal subcomplex. The restriction of the refinement $\cR \to
P^\vee$ to the subcomplex $(P/S)^\vee$ is again a refinement, so defines a
blow-up of the model space $X_{P/S}$.

\begin{prop}
Then the preimage of the open set $X_{S^{-1}P} \cong X_{P/S}\times X^\circ_{S}
\subset X_P$ under \eqref{E:local_blow_down} admits a diffeomorphism
\begin{equation}
	\beta^{-1}(X_{S^{-1}P}) \cong [X_{P/S}; \cR \rst (P/S)^\vee]\times X^\circ_{S}.
	\label{E:blow-up_localization}
\end{equation}
\label{P:blow-up_localization}
\end{prop}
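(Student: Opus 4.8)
The plan is to reduce the global gluing picture over $X_{S^{-1}P}$ to the fiberwise picture over $X_{P/S}$, using the splitting $X_{S^{-1}P} \cong X_{P/S} \times X^\circ_S$ of Proposition~\ref{P:U_S}. The first step is to identify which charts $X_{R^\vee}$ of the blow-up $[X_P; \cR]$ actually meet $\beta^{-1}(X_{S^{-1}P})$. Since $\beta$ is built from the maps $X_{R^\vee} \to X_P$ dual to $\psi_R^\vee : P \to R^\vee$, and $X_{S^{-1}P} \subset X_P$ is the locus where $x_s > 0$ for all $s \in S$, the preimage of $X_{S^{-1}P}$ in the chart $X_{R^\vee}$ is exactly the locus where $x_{\psi_R(s)} > 0$ for all $s \in S$, i.e.\ $X_{(\psi_R(S))^{-1} R^\vee}$. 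Here $\psi_R(S) \leq R^\vee$ is the face of $R^\vee$ onto which $S^\perp$ maps under the arrow $S^\perp = (P/S)^\vee \hto R^\vee$ that exists because $\cR \to P^\vee$ is a morphism of complexes (so $\psi_R$ carries the face $S^\perp$ into a face of $R^\vee$); note that $X_{(\psi_R(S))^{-1} R^\vee}$ is nonempty precisely when $R^\vee$ has such a face, equivalently when the corresponding $R \in \cR$ lies in $\cR\rst(P/S)^\vee$, or has a face there.

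The second step is to recognize each such open chart as a product. Writing $T = \psi_R(S) \leq R^\vee$, Proposition~\ref{P:split_ex_seq} gives $T^{-1} R^\vee \cong R^\vee/T \times T^\gp$, hence $X_{T^{-1}R^\vee} \cong X_{R^\vee/T} \times X^\circ_T$. I would then check that $X_{R^\vee/T}$ is exactly the chart indexed by $R$ (or its relevant face) in the blow-up $[X_{P/S}; \cR\rst(P/S)^\vee]$ — this uses that the dual of the face inclusion $S^\perp \hookrightarrow R^\vee$ is the quotient $R^\vee \to R^\vee/S^\perp \cong$ (the normal monoid of $R$ inside $(P/S)^\vee$), matching the description of the refinement $\cR\rst(P/S)^\vee$ of $(P/S)^\vee$ — and that $X^\circ_T \cong X^\circ_S$ canonically, compatibly across charts, via the localization $S^\perp \hto R^\vee$ at the group level being an isomorphism onto $(S^\perp)^\gp \cong S^\gp$. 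So each chart of $\beta^{-1}(X_{S^{-1}P})$ is canonically $(\text{chart of }[X_{P/S};\cR\rst(P/S)^\vee]) \times X^\circ_S$.

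The third step is to check that the gluing maps \eqref{E:local_diffeos} restrict, on these product charts, to (gluing map of the lower blow-up) $\times \id_{X^\circ_S}$. This is where the identification $(Q^\perp)^{-1}R_1^\vee = (Q^\perp)^{-1} R_2^\vee$ of Lemma~\ref{L:gluing_maps} must be shown to be compatible with localizing further at $\psi_{R_i}(S)$ and passing to the quotient by $S^\gp$; concretely, in the lattice $L^\vee \cong P^\gp$ one localizes at the sublattice generated by $(Q^\perp)^\gp$ and $(S^\perp)^\gp$ and then quotients by $(S^\perp)^\gp \cong S^\gp$, and one checks the resulting equality descends to the corresponding statement for the refinement $\cR\rst(P/S)^\vee$ of $(P/S)^\vee$. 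Once the gluing data match on the nose, the push-out \eqref{E:local_blow-up} defining the restriction of $[X_P;\cR]$ over $X_{S^{-1}P}$ is identified with the push-out defining $[X_{P/S};\cR\rst(P/S)^\vee]$, times the common factor $X^\circ_S$, yielding \eqref{E:blow-up_localization}; compatibility with $\beta$ and with the blow-down map of the lower blow-up is then immediate from the commuting-homomorphism observation at the end of the proof of Lemma~\ref{L:gluing_maps}.

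I expect the main obstacle to be the bookkeeping in step three: making sure the two successive operations (localizing $R^\vee$ at the face $\psi_R(S)$, then splitting off the $S^\gp$ factor) are performed consistently for $R_1$ and $R_2$ simultaneously, so that the glued-up product structure is well-defined and not merely present chart-by-chart. This is essentially the same lattice-theoretic manipulation as in Lemma~\ref{L:gluing_maps} but carried out in the further localization $((Q^\perp)^\gp + (S^\perp)^\gp)^{-1} R_i^\vee$; the fact that $S^\perp$ is a genuine face of each $R_i^\vee$ (being the image of the face $S^\perp \leq P^\vee$ under the complex morphism) is what keeps everything in line.
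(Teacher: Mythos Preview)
Your outline follows the same three-step strategy as the paper (identify the preimage chart by chart, split each piece as a product, match the gluing), but there is a genuine gap in step two which the paper addresses explicitly and you do not.

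For a maximal $R \in \cR$, set $T = R \cap S^\perp \leq R$; its annihilator $T^\perp \leq R^\vee$ is the face generated by the image of $S$ under $\psi_R^\vee : P \to R^\vee$ (this is Lemma~\ref{L:refinement_subspace_factor}), and the preimage of $X_{S^{-1}P}$ in the chart $X_{R^\vee}$ is $X_{(T^\perp)^{-1}R^\vee}$. Your product identification $X_{(T^\perp)^{-1}R^\vee} \cong X_{T^\vee}\times X^\circ_S$ and your claim that $X_{T^\vee}$ is a chart of $[X_{P/S};\cR\rst(P/S)^\vee]$ both presuppose $\dim(T) = \dim(S^\perp)$, i.e.\ that $T$ is \emph{maximal} in $\cT := \cR\rst(P/S)^\vee$. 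But a maximal $R$ may meet the face $S^\perp \leq P^\vee$ in a face of strictly smaller dimension; then $\dim(T^\perp) > \dim(S)$, so $(T^\perp)^\gp \not\cong S^\gp$ and $X_{T^\vee}$ has the wrong dimension to be a chart of the lower blow-up. The paper handles this by showing that whenever $T$ is not maximal there is another maximal $R'$ whose face $T' = R'\cap S^\perp$ \emph{is} maximal, with $T \leq Q := R\cap R'$; then $Q^\perp \leq T^\perp$ in $R^\vee$, whence $X_{(T^\perp)^{-1}R^\vee} \subset X_{(Q^\perp)^{-1}R^\vee}$, so this piece lies entirely in a gluing region and may be discarded. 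Only after this reduction does the chart-by-chart product matching go through, using a single splitting of $S^\gp \to P^\gp \to P^\gp/S^\gp$ to produce compatible isomorphisms $(T^\perp)^{-1}R^\vee \cong T^\vee \times S^\gp$ for all the remaining $R$ simultaneously; this is the commutative square at the end of the paper's proof, and it also disposes of your anticipated obstacle in step three.

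There is also some directional confusion in step one: there is no arrow $S^\perp \hto R^\vee$, and $S^\perp$ is not a face of $R^\vee$ (nor of any $R_i^\vee$, contrary to your final paragraph). The relevant map is the dual homomorphism $\psi_R^\vee : P \to R^\vee$, under which the image of $S$, not $S^\perp$, generates the face $T^\perp$.
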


Proposition~\ref{P:blow-up_localization} suggests a way to define the blow-up
of $X_P$ for a non-sharp monoid $P$.
Namely, if $P$ is not sharp, we define $[X_P; \cR]$ for a refinement $\cR \to
P^\vee = (P^\sharp)^\vee$ by
\[
	[X_P; \cR] = [X_{P^\sharp}; \cR] \times X_{P^\times}
\]
with respect to an isomorphism $P \cong P^\sharp \times P^\times$, and then
this is consistent with further localization.

\begin{lem}
For any $R \in \cR$, let $T = R \cap S^\perp \leq R$ be the face given by the
intersection of $R$ with the face $S^\perp \leq P^\vee$.  Then the preimage of
$X_{S^{-1}P}$ under \eqref{E:refinement_map_on_models} is the space
$X_{(T^\perp)^{-1}R^\vee}$, and we have a commutative diagram
\[
\begin{tikzcd}[column sep=small, row sep=small]
X_{(T^\perp)^{-1}R^\vee} \ar[hookrightarrow]{r} \ar{d}&  X_{R^\vee} \ar{d} \\
X_{S^{-1}P} \ar[hookrightarrow]{r} &  X_P 
\end{tikzcd}
\]
\label{L:refinement_subspace_factor}
\end{lem}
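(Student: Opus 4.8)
The plan is to unwind the definitions on both sides and verify the equality of the relevant monoids, since everything here is really a statement about toric submonoids of the lattice $L^\vee = P^\gp$ and the induced maps of model spaces. Recall that $\beta \rst X_{R^\vee} : X_{R^\vee} \to X_P$ is the interior b-map dual to $\psi_R^\vee : P \to R^\vee$, or equivalently (after identifying $R$ with its image in $L = (P^\vee)^\gp$) it is induced by the inclusion $P \hookrightarrow R^\vee \subset L^\vee$. A point $x \in X_{R^\vee}$ lands in the open set $X_{S^{-1}P} \subset X_P$ exactly when the pullbacks $\beta^\ast x_s = x_{\psi_R^\vee(s)}$ are strictly positive for all $s \in S$; that is, when $x(s) \neq 0$ in $R^\vee$ for all $s \in S$, viewing $S \subset P \subset R^\vee$. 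By Proposition~\ref{P:U_S}, the locus in $X_{R^\vee}$ where $x_\sigma > 0$ for all $\sigma$ in a submonoid $\Sigma \leq R^\vee$ is precisely $X_{\Sigma^{-1}R^\vee}$, so the first task is to identify the face of $R^\vee$ generated (as a face) by the image of $S$.

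The key claim is that this face is exactly $T^\perp \leq R^\vee$, where $T = R \cap S^\perp$. To see this, I would argue as follows. On one hand, $S \subset S^{\perp\perp} = (S^\perp)^\perp$ inside $L^\vee$ (using sharpness of $P$, so $S^{\perp\perp} = S$ rather than $S^\sharp$), and $T^\perp = (R \cap S^\perp)^\perp \geq R^\vee \cap (S^\perp)^\perp \supset$ the image of $S$; more carefully, $T^\perp$ consists of those $r \in R^\vee$ annihilating $T = R \cap S^\perp$, and since every element of $S$, viewed in $R^\vee$, pairs trivially with everything in $S^\perp \supset T$, we get $S \subset T^\perp$. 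On the other hand, $T^\perp$ is the smallest face of $R^\vee$ containing $S$: if $S \subset U$ for a face $U \leq R^\vee$, then $U^\perp \leq (R^\vee)^\vee = R$ (using duality for the sharp monoid $R$) annihilates $S$, so $U^\perp \leq R \cap S^\perp = T$, and applying $\perp$ back gives $T^\perp \leq U^{\perp\perp} = U$. Here I'm invoking the inclusion- and codimension-reversing duality $S \mapsto S^\perp$ between faces of a sharp monoid and its dual, together with $S^{\perp\perp} = S^\sharp$ from the excerpt. This identifies $\beta^{-1}(X_{S^{-1}P}) \cap X_{R^\vee}$ with $X_{(T^\perp)^{-1}R^\vee}$.

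Finally, the commutativity of the square is essentially formal: the top horizontal map is the open inclusion $X_{(T^\perp)^{-1}R^\vee} \hookrightarrow X_{R^\vee}$ of Proposition~\ref{P:U_S}, the right vertical is $\beta \rst X_{R^\vee}$, the bottom horizontal is the open inclusion $\iota^\ast : X_{S^{-1}P} \hookrightarrow X_P$ from \eqref{E:iota_ast}, and the left vertical is the restriction of $\beta$. On the level of monoids, all four maps come from a single commuting square of homomorphisms of submonoids of the lattices $L^\vee$ and $P^\gp$: the localization $P \to S^{-1}P$ composed with $S^{-1}P \to (T^\perp)^{-1}R^\vee$ agrees with $P \to R^\vee \to (T^\perp)^{-1}R^\vee$, because both are the restriction of the inclusion $P \hookrightarrow (T^\perp)^{-1}R^\vee$ and $S^{-1}P$ is characterized by its universal property (the composite $P \to (T^\perp)^{-1}R^\vee$ sends $S$ to units, since $S \subset T^\perp$ is localized, hence factors uniquely through $S^{-1}P$). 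Passing to $X_{\bullet} = \Hom(\bullet; \bbR_+)$ reverses arrows and yields the desired commutative diagram.

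I expect the main obstacle to be the careful bookkeeping in the identification $T^\perp = $ (face of $R^\vee$ generated by the image of $S$), in particular keeping straight which monoids are being regarded inside $L = (P^\vee)^\gp$ versus $L^\vee = P^\gp$, and making sure the two applications of $\perp$ are taken in the correct ambient monoids ($R^\vee$ for one, $R$ for the other). The sharpness of $P$ (and hence of $R$, being dual to the toric $R^\vee$) is what makes $S^{\perp\perp} = S$ and $R = (R^\vee)^\vee$ exactly, without passing to sharpenings, and it should be flagged explicitly. Everything else — that localizing $R^\vee$ at the face $T^\perp$ produces exactly the preimage, and the commutativity — follows mechanically from Proposition~\ref{P:U_S}, the universal property of localization, and the contravariance of $P \mapsto X_P$.
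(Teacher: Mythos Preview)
Your proposal is correct and follows essentially the same route as the paper's proof: both reduce to showing that $T^\perp$ is the smallest face of $R^\vee$ containing the image of $S$, and both establish this via the order-reversing duality $U \leftrightarrow U^\perp$ between faces of $R$ and $R^\vee$ (the paper phrases it as ``if $S$ lay in a proper face of $T^\perp$ there would be a larger $Q \leq R$ with $Q \subset S^\perp$'', which is exactly your argument $U^\perp \subset R \cap S^\perp = T \Rightarrow T^\perp \leq U$ read contrapositively). Your treatment of the commutativity of the square via the universal property of localization is more explicit than the paper's, which leaves it implicit.
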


\begin{proof}
The preimage of $X_{S^{-1}P} = \Hom(S^{-1}P; \bbR_+)$  in $X_{R^\vee}$ consists
of those $x \in \Hom(R^\vee; \bbR_+)$ such that $x \neq 0$ on the image of $S$
in $R^\vee$ with respect to the homomorphism $P \to R^\vee$.
Since $T \subset S^\perp$, by duality $S \subset T^\perp$, and $S$ does not lie
in any proper boundary face of $T^\perp$ since then there would be a larger $Q
\leq R$ for which $Q \subset S^\perp$.
Since $x^{-1}(0)$ is the complement of a face of $R^\vee$, and $x \neq 0$ on
$S$, it follows that $x \neq 0$ on $T^\perp$ and thus $x \in
X_{(T^\perp)^{-1}R^\vee}$.
\end{proof}

\begin{proof}[Proof of Proposition~\ref{P:blow-up_localization}]
It follows from Lemma~\ref{L:refinement_subspace_factor}, that
$\beta^{-1}(X_{S^{-1}P})$ consists of the union as in \eqref{E:local_blow-up}
of the subspaces $X_{T^\perp)^{-1}R^\vee} \subset X_{R^\vee}$ for maximal $R
\in \cR$, where $T = R \cap S^\perp \leq R$.
On the other hand, if we denote $\cR \rst (P/S)^\vee$ by $\cT$ (viewed as
a set of submonoids of $S^\perp$), the blow-up $[X_{P/S}; \cT]$ is determined
by the gluing of the spaces $X_{T^\vee}$ for the set of $T \in \cT$ which are
maximal, i.e., $\dim(T) = \dim(P/S) = \codim(S)$.

It is easy to see that each maximal $T \in \cT$ is a face $T \leq R$ for some
maximal $R \in \cR$, but the converse is false; for $R \in \cR$ maximal, the
corresponding $T \in \cT$ given by $T = R \cap S^\perp$ need not be maximal.
However, in this latter situation there necessarily exists some other maximal
$R' \in \cR$ with a maximal face $T' \in \cT$, for which $T \leq T'$, and $T
\leq Q$, where
\[
	Q = R \cap R' \subset P^\vee,
\]
as in Lemma~\ref{L:gluing_maps}.
It follows that $(Q^\perp)^{-1}R^\vee \subset (T^\perp)^{-1}R^\vee$ and
therefore that $X_{(T^\perp)^{-1} R^\vee}$ is entirely contained in the subset
$X_{(Q^\perp)^{-1}R^\vee}$ along which $X_{R^\vee}$ and $X_{(R')^\vee}$ are
glued.

Thus it suffices to restrict consideration to those maximal $R \in \cR$ with a
maximal $T \leq R$, $T \in \cT$.
For such $R$ and $T$, we claim that $X_{(T^\perp)^{-1} R^\vee} \cong
X_{T^\vee}\times X^\circ_S$.

Dualizing the maps $T \hookrightarrow R$, $S^\perp \hookrightarrow P^\vee$ and
$R \hookrightarrow P^\vee$, we have a commutative diagram
\[
\begin{tikzcd}[column sep=small, row sep=small]
T^\perp \ar[hookrightarrow]{r} \ar{d}& R^\vee \ar{d}
\\ S \ar[hookrightarrow]{r} & P
\end{tikzcd}
\]
Passing to the localizations of $S$ and $T^\perp$, and using the assumption
that $\dim(T) = \dim(S^\perp)$, so that $\dim(T^\perp) = \dim(S)$, we have
\[
\begin{tikzcd}[column sep=small, row sep=small]
(T^\perp)^\gp \ar[hookrightarrow]{r} \ar[-, double equal sign distance]{d}& (T^\perp)^{-1}R^\vee  \ar{d}
\\ S^\gp \ar[hookrightarrow]{r} & S^{-1}P
\end{tikzcd}
\]
in which the left vertical arrow is an equality.

The isomorphism $S^{-1}P \cong (P/S) \times S^\gp$ comes from a choice of
splitting of the exact sequence
\[
	S^\gp \to P^\gp \to P^\gp/S^\gp
\]
of abelian groups, and by the above this determines a compatible isomorphism
\[
	(T^\perp)^{-1}R^\vee \cong T^\vee \times (T^\perp)^\gp = T^\vee \times S^\gp
\]
with respect to which the map $X_{(T^\perp)^{-1}R^\vee} \to X_{S^{-1}P}$ is
identified with the map $X_{T^\vee}\times X_S^\circ \to X_{P/S}\times
X_S^\circ$, with the identity map in the second factor.
\end{proof}

\subsubsection{Local blow-up and b-maps} \label{S:blow-up_and_b-maps}
Suppose $f : O \subset X_P \to X_Q$ is a b-map such that $O$ meets the minimal
boundary face $X_{P^\times}$ and $\cF(f)(O\cap X_{P^\times}) = X_{Q^\times}$,
so that $f_\natural : \cP_O \to \cP_{X_Q}$ is determined by a single
homomorphism $f_\natural : P^\vee \to Q^\vee$ as in \S\ref{S:mon_cmplx_models}.
\begin{lem}
Suppose $f : O \subset X_P \to X_Q$ is a map as above, and $\cR \to Q^\vee$
is a refinement.  If $f_\natural : P^\vee \to Q^\vee$ factors through some $R
\in \cR$, then there is a unique lift of $f$ to a map
\[
	\wt f : O \subset X_P \to [X_Q; \cR]
\]
such that $f = \beta \circ \wt f$.
\label{L:local_b-map_lift}
\end{lem}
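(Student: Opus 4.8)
The plan is to exploit the description of $[X_Q;\cR]$ as a union of the charts $X_{R^\vee}$ (for maximal $R \in \cR$) glued along the localizations $X_{(Q'^\perp)^{-1}R^\vee}$, and to build $\wt f$ chart-by-chart. By hypothesis $f_\natural : P^\vee \to Q^\vee$ factors through $R$, say $f_\natural = \iota_R \circ g$ with $\iota_R : R \hookrightarrow Q^\vee$ and $g : P^\vee \to R$ an interior homomorphism (interior because the image of the minimal face of $O$ lands in $X_{Q^\times}$, i.e.\ $f_\natural$ is interior and $R$ is maximal, so $g$ cannot land in a proper face of $R$). Note $R$ need not be unique, but any two such $R$ differ only on a common face and the construction will be seen to be independent of the choice.

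First I would produce the candidate lift. Dualizing $g$ gives an interior homomorphism $g^\vee : R^\vee \to P^\sharp \cong P^\times$\,-complement... more precisely $g^\vee : R^\vee \to (P^\vee)^\vee = P^\sharp$, which (after composing with the inclusion $P^\sharp \hookrightarrow P$ coming from a splitting $P \cong P^\sharp \times P^\times$, or rather: since $f$ itself is only defined on $O$, I would work directly) I would instead characterize $\wt f$ through coordinates. By Lemma~\ref{L:exponents}, in coordinates $(x,y)$ on $O$ and $(x',y')$ on $X_Q$, $f$ has the form $f(x,y) = (h(x,y)\,x^\mu, g(x,y))$ where $\mu$ represents $f_\natural^\vee : Q^\sharp \to P^\sharp$. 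The composite interior b-map $X_{R^\vee} \to X_Q$ of \eqref{E:refinement_map_on_models} is likewise given in suitable coordinates $(x'',y'')$ on $X_{R^\vee}$ by a monomial map with exponent matrix representing $\psi_R^\vee : Q \to R^\vee$. Since $f_\natural$ factors through $R$, the exponent matrix $\mu$ factors as a product of the exponent matrix of $\psi_R^\vee$ with a matrix $\nu \in \Mat$ of natural numbers representing $g^\vee : Q''^\sharp \to P^\sharp$ (here I use that $g$ is an interior homomorphism of \emph{toric} monoids, so $g^\vee$ is a genuine homomorphism into $P^\sharp$ with nonnegative integer entries, using Proposition~\ref{P:differential_normal} and the duality of \S\ref{S:duality}). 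This gives explicit formulas $\wt f(x,y) = (\tilde h(x,y)\,x^\nu, \tilde g(x,y))$ landing in the chart $X_{R^\vee}$; I would check directly from \eqref{E:b-map} that this is an interior b-map, and that composing with the monomial blow-down $X_{R^\vee} \to X_Q$ recovers $f$, using $(x^\nu)^{(\cdot)}$-type identities and the relation $(x^\mu)^\nu = x^{\mu\nu}$ noted after \eqref{E:local_b_map}.

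Next, uniqueness and well-definedness. Since $\beta : [X_Q;\cR]^\circ \to X_Q^\circ$ is a diffeomorphism (Proposition~\ref{P:local_blow-up_strucure}) and $f$ is interior, $f$ carries the dense open $O \cap X_P^\circ$ into $X_Q^\circ$, so $\wt f$ is forced to equal $\beta^{-1}\circ f$ there; as $O \cap X_P^\circ$ is dense in $O$ and $[X_Q;\cR]$ is Hausdorff, any continuous lift is determined by its values on the interior. This simultaneously gives uniqueness of $\wt f$ and shows that the chart-wise construction above is independent of the choice of maximal $R$ through which $f_\natural$ factors: two such choices agree on the interior, hence everywhere. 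It remains only to see that the formula, a priori defined as a map $O \to X_{R^\vee}$, actually has image in the relevant open part so that it composes correctly with the gluing — but this follows because $\beta^{-1}\circ f$ restricted to $O\cap X_P^\circ$ already lands in $X_{R^\vee}^\circ \subset X_{R^\vee}$, and the closure of this set in $[X_Q;\cR]$ meets only the chart $X_{R^\vee}$ together with its boundary faces indexed by faces of $R$, all of which lie in $X_{R^\vee}$.

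The main obstacle I anticipate is the coordinate bookkeeping in the middle paragraph: verifying that the factorization $f_\natural = \iota_R \circ g$ of \emph{monoid homomorphisms} translates into a factorization of $f$ as \emph{b-maps} through the specific chart $X_{R^\vee}$, i.e.\ that the smooth positive prefactors $h,\tilde h$ and the interior-coordinate parts $g,\tilde g$ can be arranged consistently. The key point making this work is that $g : P^\vee \to R$ being an interior homomorphism of toric monoids forces its dual to have nonnegative entries, so no division by coordinate functions that vanish is required; once that is isolated, the remaining checks are routine manipulations with \eqref{E:local_b_map} and Proposition~\ref{P:nonsmooth_model}.
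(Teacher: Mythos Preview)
Your overall strategy matches the paper's: lift $f$ through a single chart $X_{R^\vee}$ for a maximal $R$ (your observation that $f_\natural$ interior forces $R$ to be maximal is correct), work in coordinates via Lemma~\ref{L:exponents}, and deduce uniqueness from the fact that $\beta$ is a diffeomorphism on interiors together with density of $O \cap X_P^\circ$. The uniqueness paragraph is essentially identical to the paper's argument.

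The existence argument has a real gap, and you have misdiagnosed the obstacle. You correctly note that the exponent matrix of the candidate lift (representing $g^\vee : R^\vee \to P^\sharp$; the paper calls this $\gamma$) has nonnegative entries, so the monomial part $x^\gamma$ is well-defined. But you never specify $\tilde h$, and this is where the actual work lies. If $\beta(x'',y'') = ((x'')^\nu, y'')$ in coordinates, the condition $\beta \circ \wt f = f$ forces $(\tilde h\,x^\gamma)^\nu = h\,x^\mu$, hence $\tilde h^\nu = h$ modulo the relations cutting out $X_Q$; one must \emph{solve} this for $\tilde h$. The paper does so by using that $R$ is maximal, so $(\beta_\natural^\vee)^\gp : (Q^\sharp)^\gp \to (R^\vee)^\gp$ is an isomorphism; choosing an integer matrix $\tau$ with $\tau\nu = \mathrm{id}$ on the sublattice $(Q^\sharp)^\gp$ (such $\tau$ may have negative entries), one sets $\wt f(x,y) = \big((h\,x^\mu)^\tau, g\big) = (h^\tau\,x^\gamma, g)$, which is legitimate precisely because $h > 0$. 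Your ``key point'' about nonnegativity of the entries of $g^\vee$ handles only the monomial factor and says nothing about the smooth prefactor; the substance of the existence proof is the construction of $\tau$, not the observation about $\gamma$.
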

\begin{proof}
We will show that $f$ lifts to factor uniquely through the map $X_{R^\vee} \to
X_Q$.
The existence of such a map is obtained using coordinates. Thus let
$\set{p_1,\ldots,p_n}$, $\set{p'_1,\ldots,p'_{n'}}$ and $\set{r_1,\ldots,r_l}$
be generators for $P^\sharp$, $Q^\sharp$ and $R^\vee = R$, respectively.
By assumption we have a commutative diagrams
\[
\begin{tikzcd}[column sep=small]
Q^\sharp \ar{rr}{f_\natural^\vee} \ar{dr}[below left]{\beta_\natural^\vee} & & P^\sharp 
\\ & R^\vee \ar{ur}[below right]{\psi} &
\end{tikzcd}
\qquad
\begin{tikzcd}[column sep=tiny]
(Q^\sharp)^\gp \ar{rr}{(f_\natural^\vee)^\gp} \ar{dr}[above right]{\cong}[below left]{(\beta_\natural^\vee)^\gp}
	&& (P^\sharp)^\gp 
\\ & (R^\vee)^\gp \ar{ur}[below right]{\psi^\gp}& 
\end{tikzcd}
\qquad
\begin{tikzcd}[column sep=small]
\bbZ^{n'} \ar{rr}{\mu} \ar{dr}[below left]{\nu} & & \bbZ^n 
\\ & \bbZ^l \ar{ur}[below right]{\gamma} & 
\end{tikzcd}
\]
where each diagram is a restriction of the latter ones. Here
$\mu \in \Mat(n\times n'; \bbN)$ represents $f_\natural^\vee$, $\nu \in
\Mat(n\times l; \bbN)$ represents $\beta_\natural^\vee$ and $\gamma \in
\Mat(l\times n'; \bbN)$ represents $\psi$ with respect to the chosen generators, and we
realize $(P^\sharp)^\gp$, $(Q^\sharp)^\gp$ and $(R^\vee)^\gp$ as sublattices in $\bbZ^n$,
$\bbZ^{n'}$ and $\bbZ^l$, respectively.

From Lemma~\ref{L:exponents} and the definition of $X_{R^\vee} \to X_P$, the
maps of spaces are represented with respect to coordinates $(x,y)$ on $X_P$,
$(x',y')$ on $X_Q$ and $(x'',y'')$ on $X_{R^\vee}\times X_{Q^\times}$ by 
\begin{align*}
	f &: (x,y) \mapsto \big(h(x,y)\,x^\mu, g(x,y)\big) = (x',y')
	\\ \beta &: (x'',y'') \mapsto \big((x'')^\nu, y'') = (x',y').
\end{align*}
Since $(\beta_\natural^\vee)^\gp : (Q^\sharp)^\gp \to (R^\vee)^\gp$ is an isomorphism,
there exists a (not necessarily unique) $\tau : \bbZ^l \to \bbZ^{n'}$ such that
$\tau\nu = 1$ on the subspace $(Q^\sharp)^\gp \subset \bbZ^{n'}$.
Note that $\tau$ may have negative entries.
In particular, $\mu\tau = \gamma$ on $(R^\vee)^\gp$;
equivalently, $\tau^\vee\mu^\vee = \gamma^\vee$ on $(P^\vee)^\gp$.
Define the b-map $\wt f : O \to X_R$ in coordinates by
\[
	\wt f : (x,y) \mapsto \big((h(x,y)\,x^\mu)^\tau, g(x,y)\big).
	= \big(h(x,y)^\tau\,x^\gamma, g(x,y)\big)
\]
That $\tau$ may have negative entries is not a problem since the components of
$h$ are strictly positive.

Composing with $\beta$ gives
\begin{align*}
	\beta \circ \wt f : (x,y) \mapsto \big((h(x,y)^\tau\,x^\gamma)^\nu, g(x,y)\big)
	&= \big(((h(x,y)\,x^\mu)^\tau)^\nu, g(x,y)\big)
	\\&= \big(h(x,y)\,x^\mu, g(x,y)\big).
\end{align*}
This proves existence.

Uniqueness follows from the fact that $X_{R^\vee}^\circ \cong X_Q^\circ$, so
that $\wt f$ is completely determined by $f$ as a map from $O \cap X_P^\circ$
to $X_{R^\vee}^\circ$, and then then extension to the whole domain is unique by
continuity.
\end{proof}

\begin{cor}
Suppose $O_i \subset X_{P_i}$, $i = 1,2$ are open sets with a diffeomorphism
\[
	f : O_1 \stackrel \cong \to O_2,
\]
and suppose $\cR_i \to P_i^\vee$ are refinements such that $\cR_1 \rst
\cP_{O_1} \to \cP_{O_1} \stackrel{f_\natural} \to \cP_{O_2}$ factors through an
isomorphism
\[
	\cR_1 \rst \cP_{O_1} \cong \cR_2 \rst \cP_{O_2}.
\]
Then $f$ lifts to a unique diffeomorphism
\[
	\wt f : O'_1 \stackrel \cong \to O'_2,
	\quad O'_i = \beta^{-1}(O_i) \subset [X_{P_i}; \cR_i],\ i = 1,2,
\]
between the preimages of the $O_i$ in the blown up spaces.
\label{C:diffeos_lift}
\end{cor}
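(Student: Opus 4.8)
The plan is to assemble $\wt f$ chart by chart from the model‑space lifting Lemma~\ref{L:local_b-map_lift}, and then to produce its inverse by running the identical argument on $f^{-1}$; the uniqueness clause of Lemma~\ref{L:local_b-map_lift}---equivalently, the fact that $\beta$ restricts to a diffeomorphism of the dense interiors (Proposition~\ref{P:local_blow-up_strucure})---will take care of all of the gluing, so that the only real work is local existence of the lifts. Note first that, abstractly, $\wt f$ is forced to agree on interiors with $\gamma := \beta^{-1}\circ f\circ\beta : (O'_1)^\circ\to(O'_2)^\circ$, which is already a diffeomorphism since $\beta$ is a diffeomorphism of interiors on each side; the content is that this extends over the boundary.

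For local existence I would first pass (as in the discussion of \S\ref{S:mon_cmplx_models}, and using Proposition~\ref{P:blow-up_localization} to keep everything compatible with the blow‑ups) to small open pieces of $O_1$ meeting a unique minimal boundary face, so that we may assume $O_1$ meets the deepest stratum of $X_{P_1}$ and $O_2 = f(O_1)$ meets the deepest stratum of $X_{P_2}$; then $\cP_{O_i} = P_i^\vee$, $\cR_i\rst\cP_{O_i} = \cR_i$, and $f_\natural : P_1^\vee \to P_2^\vee$ is a single interior homomorphism factoring, by hypothesis, through an isomorphism $\cR_1\cong\cR_2$. Cover $[X_{P_1};\cR_1]$ by the charts $X_{R_1^\vee}$ for maximal $R_1\in\cR_1$; on $U_{R_1} := X_{R_1^\vee}\cap\beta^{-1}(O_1)$ the composite $U_{R_1}\xrightarrow{\ \beta\ }O_1\xrightarrow{\ f\ }O_2\hookrightarrow X_{P_2}$ induces on top monoids the homomorphism $R_1\hookrightarrow P_1^\vee\xrightarrow{f_\natural}P_2^\vee$ (using $\beta_\natural = \psi_1$ from Proposition~\ref{P:local_blow-up_strucure}), which factors through the maximal monoid $R_2\in\cR_2$ corresponding to $R_1$. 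Localizing $U_{R_1}$ and $X_{P_2}$ once more so that the hypotheses of Lemma~\ref{L:local_b-map_lift} are met, that lemma produces a unique lift $U_{R_1}\to X_{R_2^\vee}\subset[X_{P_2};\cR_2]$ of $f\circ\beta$. By uniqueness these local lifts agree on overlaps, so they patch to an interior b‑map $\wt f : O'_1\to[X_{P_2};\cR_2]$ with $\beta\circ\wt f = f\circ\beta$; since $\beta\circ\wt f$ maps $\beta^{-1}(O_1)$ into $f(O_1) = O_2$, the image lies in $O'_2 = \beta^{-1}(O_2)$.

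Running the same construction on $f^{-1} : O_2\to O_1$---the factorization hypothesis being symmetric, as $f_\natural$ is an isomorphism and $\cR_1\cong\cR_2$---gives an interior b‑map $\wt{f^{-1}} : O'_2\to O'_1$ with $\beta\circ\wt{f^{-1}} = f^{-1}\circ\beta$. Then $\wt{f^{-1}}\circ\wt f$ and $\mathrm{id}_{O'_1}$ are two lifts of $\mathrm{id}_{O_1}$ along $\beta$, hence agree on the dense interior $\beta^{-1}(O_1^\circ)$ and therefore everywhere by continuity; symmetrically $\wt f\circ\wt{f^{-1}} = \mathrm{id}_{O'_2}$, so $\wt f$ is a diffeomorphism, and the same density argument shows it is the unique lift of $f$. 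I expect the only delicate point to be the repeated localizations---matching the general open sets $O_i$ and the chart pieces $U_{R_1}$, none of which need meet the deepest boundary faces, to the hypotheses of Lemma~\ref{L:local_b-map_lift}. This is routine, handled by covering by small opens meeting unique minimal boundary faces together with Proposition~\ref{P:blow-up_localization}, and I do not anticipate genuine difficulty beyond this bookkeeping.
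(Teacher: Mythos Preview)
Your proposal is correct and follows essentially the same approach as the paper: compose $f$ with the blow-down $\beta_1$, lift via Lemma~\ref{L:local_b-map_lift}, do the same for $f^{-1}$, and use uniqueness of lifts to conclude that the two constructions are mutual inverses. The paper's version is terser---it invokes Lemma~\ref{L:local_b-map_lift} once for $f\circ\beta_1$ and once for $f^{-1}\circ\beta_2$ without spelling out the chart-by-chart patching or the localization bookkeeping you mention---but the underlying logic and the use of uniqueness to identify $\wt f\circ\wt g$ and $\wt g\circ\wt f$ with the respective identities are identical.
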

\begin{proof}
The composition of the blow-down and $f$ is a b-map
\[
	f\circ \beta_1 : O'_1 \to X_{P_2}
\]
whose morphism $\cP_{O'_1} \cong \cR_1 \rst \cP_{O_1}\to \cP_{X_{P_2}} =
P_2^\vee$ factors through $\cR_2 \to P_2^\vee$ by assumption. 
From Lemma~\ref{L:local_b-map_lift}, this lifts to a b-map $\wt f : O'_1 \to
[X_{P_2}; \cR_2]$ with image in $O'_2$. 
Likewise, the lift of $f^{-1}\circ \beta : O'_2 \to X_{P_1}$ is a map $\wt g :
O'_2 \to [X_{P_1}; \cR_1]$ with image in $O'_1$.

Observe that $f \circ \beta_1 \circ \wt g = \beta_2$ and $f^{-1}\circ \beta_2
\circ \wt f = \beta_1$, and since the respective identity maps on $O'_1$ and
$O'_2$ are b-maps lifting $\beta_1$ and $\beta_2$, it follows from the
uniqueness part of Lemma~\ref{L:local_b-map_lift} that $\wt f$ and $\wt g$ are
inverses.
\end{proof}

\subsection{Global blow-up} \label{S:global_blow-up}

Let $\set{O'_a}_{a \in A}$ be an atlas for a manifold $M$, which is to say a
cover by charts $\phi_a : O_a' \stackrel \cong\to  O_a \subset X_{P(a)}$; by
shrinking the $O_a'$ if necessary, we assume that each $O_a'$ meets a unique
minimal boundary face $F_a \in \cF(M)$. 
(In other words, among those $F \in \cF(M)$ for which $O_a' \cap F \neq
\emptyset$, there is a unique one with codimension $\max\set{\codim(F) : O_a'
\cap F \neq \emptyset}$.)
Replacing $P(a)$ by $S^{-1}P(a)$ if necessary, we may also assume that $F_a
\cap O_a'$ is identified with the minimal boundary face $X_{P(a)^\times}
\subset X_{P(a)}$; in particular $P(a)^\sharp \cong W(F_a)$. 

Then $M$ admits a canonical diffeomorphism
\begin{equation}
	M \cong \Big(\bigcup_{a \in A} O_a \Big)/ \sim
	\label{E:M_as_quotient}
\end{equation}
where $\sim$ is the equivalence relation generated by transition
diffeomorphisms
\begin{equation}
	O_{ba} \subset X_{P(a)}\stackrel \cong \to O_{ab} \subset X_{P(b)},
	\label{E:trans_diffeos_for_blowup}
\end{equation}
where $O_{ba} = \phi_a(O'_a \cap O_b') \subset O_a$ and $O_{ab} = \phi_b(O'_a
\cap O'_b) \subset O_b$.

Let $\cR \to \cP_M$ be a refinement.  For each $F \in \cF(M)$, we regard
$W(F)^\vee \subset \cP_M$ as a monoidal subcomplex, so the restriction $\cR
\rst W(F)^\vee$ may be identified with a refinement of $P(a)^\vee =
(P(a)^\sharp)^\vee$ whenever $F_a = F$. Let
\[
	U_a = \beta_a^{-1}(O_a) \subset [X_{P(a)}; \cR \rst W(F_a)^\vee].
\]
By Corollary~\ref{C:diffeos_lift}, the transition diffeomorphisms
\eqref{E:trans_diffeos_for_blowup} lift canonically to diffeomorphisms
\begin{equation}
	U_{ba} := \beta_a^{-1}(O_{ba}) \stackrel \cong \to U_{ab} = \beta_b^{-1}(O_{ab}),
	\label{E:lifted_diffeos}
\end{equation}
and we define the {\bf blow-up} of $M$ by
\begin{equation}
	[M; \cR] = \Big(\bigcup_{a \in A} U_a\Big)/ \sim,
	\quad \beta : [M; \cR] \to M,
	\label{E:global_blow-up}
\end{equation}
where $\sim$ is the equivalence relation generated by \eqref{E:lifted_diffeos}.
The {\bf blow-down map} $\beta$ in \eqref{E:global_blow-up} is determined by
the quotients of the blow-down maps $\beta_a : U_a \to O_a$.

\begin{thm}
For each refinement $\psi: \cR \to \cP_M$, $[M; \cR]$ is a manifold which is
well-defined up to unique diffeomorphism, with $[M; \cR]^\circ \cong M^\circ$,
$\cP_{[M;\cR]} \cong \cR$ and $\beta_\natural \cong \psi : \cR \to \cP_M$. The
blow-up has the following universal property: If $f : N \to M$ is an interior
b-map such that $f_\natural : \cP_N \to \cP_M$ factors through $\psi$, then $f$
lifts to a unique interior b-map $\wt f : N \to [M; \cR]$ such that $f = \beta
\circ \wt f$.
\label{T:main_thm}
\end{thm}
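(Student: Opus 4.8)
The plan is to establish the claims in the order: (a)~$[M;\cR]$ is a manifold with generalized corners; (b)~$\beta$ is an interior b-map restricting to a diffeomorphism of interiors, with $\cP_{[M;\cR]}\cong\cR$ and $\beta_\natural\cong\psi$; (c)~the universal property holds; and then to deduce well-definedness up to unique diffeomorphism formally from~(c). For~(a), the charts $U_a\subset[X_{P(a)};\cR\rst W(F_a)^\vee]$ are open subsets of model-space blow-ups, which are manifolds by Proposition~\ref{P:local_blow-up_strucure}, and the lifted transition maps \eqref{E:lifted_diffeos} are diffeomorphisms by Corollary~\ref{C:diffeos_lift}. The cocycle identity over triple overlaps is forced by uniqueness: over $U_a\cap U_b\cap U_c$ both composites of lifted transitions lift the same downstairs b-map, hence agree by the uniqueness clause of Lemma~\ref{L:local_b-map_lift}. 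A countable subatlas of $M$ yields a countable atlas of second-countable opens, so $[M;\cR]$ is second countable. Hausdorffness does not follow directly from that of $M$ since $\beta$ is not injective, but it reduces to the model case: two points with distinct images under $\beta$ are separated by $\beta$-preimages of a separation downstairs, while two points of a single fibre $\beta^{-1}(x)$ both lie in $U_a$ for any chart with $x\in O_a'$, and are separated there by Proposition~\ref{P:local_blow-up_strucure}. Finally the boundary faces are embedded, a codimension-one condition checked locally in the model blow-ups.

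For~(b), all three assertions are local and compatible with the gluing. That $\beta$ is an interior b-map restricting to a diffeomorphism $[M;\cR]^\circ\to M^\circ$ follows chart-by-chart from Proposition~\ref{P:local_blow-up_strucure}, using that \eqref{E:lifted_diffeos} covers the transition maps of $M$. By the model computation of \S\ref{S:mon_cmplx_models} and Proposition~\ref{P:local_blow-up_strucure}, the boundary faces of $U_a$ are in order-preserving bijection with the objects of $\cR$ lying over $W(F_a)^\vee\subset\cP_M$; these local bijections are compatible with the lifted transitions, so patching them gives $\cP_{[M;\cR]}\cong\cR$, and the identification carries $\beta_\natural$ to $\psi$ since it does so over each chart.

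For~(c), let $f:N\to M$ be an interior b-map with $f_\natural$ factoring through $\psi$. Refine atlases of $N$ and $M$ so that each chart $V_c\subset X_{Q(c)}$ of $N$ is carried by $f$ into a single chart $O_{a(c)}$ of $M$, and shrink further so that the local normal-form hypotheses of Lemma~\ref{L:local_b-map_lift} hold. The global factoring of $f_\natural$ forces the corresponding local homomorphism $Q(c)^\vee\to W(F_{a(c)})^\vee$ to factor through $\cR\rst W(F_{a(c)})^\vee$, so Lemma~\ref{L:local_b-map_lift} supplies a unique lift $\wt f_c:V_c\to U_{a(c)}$ with $\beta\circ\wt f_c=f$. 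On an overlap the $\wt f_c$ agree on the dense interior, where $\beta$ is invertible, hence everywhere by continuity into the Hausdorff space $[M;\cR]$; so they glue to a single interior b-map $\wt f:N\to[M;\cR]$ with $f=\beta\circ\wt f$, and the same interior-plus-continuity argument shows $\wt f$ is unique. Applying this to the blow-down map of a blow-up built from a second atlas — whose induced morphism is again $\psi$ by~(b), hence factors through $\psi$ via the identity — and to the original blow-down map produces mutually inverse diffeomorphisms between the two constructions, unique because determined on the interior; this is the asserted well-definedness up to unique diffeomorphism.

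I expect the main obstacle to lie in~(a): packaging the cocycle identity, second countability, and above all Hausdorffness into a genuine proof that the glued object is an object of $\MGC$, since these are the only steps not handed to us directly by the model-space results. Once that is secured, items~(b), (c), and uniqueness propagate cleanly from Propositions~\ref{P:local_blow-up_strucure}, Corollary~\ref{C:diffeos_lift}, and Lemma~\ref{L:local_b-map_lift}.
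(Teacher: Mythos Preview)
Your proposal is correct and follows essentially the same route as the paper: Hausdorffness via the two cases $\beta(x_1)=\beta(x_2)$ (both points lie in a common $U_a$, which is Hausdorff by Proposition~\ref{P:local_blow-up_strucure}) versus $\beta(x_1)\neq\beta(x_2)$ (separated by pullbacks from $M$); identification $\cP_{[M;\cR]}\cong\cR$ by patching the local model-space description; the universal property by refining the atlas on $N$ so each chart lands in a single chart of $M$ and invoking Lemma~\ref{L:local_b-map_lift}; and well-definedness as a formal consequence of the universal property. You are in fact more explicit than the paper on a few points it leaves implicit---the cocycle condition on triple overlaps, second countability, and the embeddedness of boundary faces---but the overall decomposition and the key inputs (Proposition~\ref{P:local_blow-up_strucure}, Corollary~\ref{C:diffeos_lift}, Lemma~\ref{L:local_b-map_lift}) are identical.
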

\begin{proof}
If two points $x_1 \neq x_2 \in [M; \cR]$ satisfy $\beta(x_1) = \beta(x_2)$,
then they lie in the Hausdorff subspace $U_a$ for some $a$, and if $\beta(x_1)
\neq \beta(x_2)$ then they are separated by the preimage under $\beta$ of
separating open sets in $M$, so $[M;\cR]$ is a Hausdorff space, with the
structure of a manifold with generalized corners coming from the charts on the
$U_a$.

In $M$, each $F^\circ$ has a covering by charts $O'_a$, such that (shrinking
$O'_a$ if necessary), $O'_a \cong O_a \subset X_{W(F)} \times X_{\bbZ^l}$, $l =
\dim(F)$, with the transition diffeomorphisms preserving this product
structure. 
The preimages of these charts in $[M; \cR]$ have the form $U_a \subset
[X_{W(F)}; \cR \rst W(F)^\vee]\times X_{\bbZ^l}$, with transition
diffeomorphisms again preserving the product structure. 
It follows that $\beta^{-1}(F^\circ) = \bigsqcup G_R^\circ$ consists of the
union of the interiors of boundary faces $G_R \in \cF([M; \cR])$ corresponding
to $R \in \cR \rst W(F)^\vee$ with $R^\circ \subset (W(F)^\vee)^\circ$, with
$\codim(G_R) = \dim(R)$ and $G_R \leq G_{R'} \iff R' \leq R$. 
By Proposition~\ref{P:b-maps_stratified}, for each $G \in \cF([M; \cR])$ there
is a unique $F \in \cF(M)$ with $\beta(G^\circ) \subset F^\circ$, so taking the union
over $F \in \cF(M)$, it follows that $\cP_{[M; \cR]} \cong \cR$.

The lifting of interior b-maps follows from the local version.
Indeed, given $f : N \to M$, we may refine the atlas on $N$ so that each chart
$O_b \subset N$ has image in some $O_a \subset M$, and then the lifted map is
given by patching together the lifted maps $\wt f : O_b \to U_a \subset [M;
\cR]$, which are necessarily compatible by the uniqueness in
Lemma~\ref{L:local_b-map_lift}.

The uniqueness of $[M; \cR]$ up to diffeomorphism follows from this, since
another choice of atlas leads to a space $[M; \cR]'$ and a unique
diffeomorphism $[M; \cR]' \cong [M; \cR]$.
\end{proof}

\subsection{Blow-up and pull back} \label{S:blow-up_pullback}

We recall one of the main results from \cite{joyce}, generalizing a similar
result for manifolds with ordinary corners in \cite{KM}.
Suppose $f : Y \to M$ and $g : Z \to M$ are interior b-maps of manifolds with
generalized corners. The maps are said to be {\bf b-transverse} if 
\[
	\bd f_\ast \oplus \bd g_\ast : \bT_y Y\oplus \bT_zZ \to \bT_x M
\]
is surjective for all $(y,z) \in Y \times Z$ such that $f(y) = g(z) = x$.

\begin{thm}[\cite{joyce}, Thm.~4.27]
If $f : Y \to M$ and $g : Z \to M$ are b-transverse maps, then the fiber
product (pull back) $Y \times_M Z$ exists in the category of manifolds with
generalized corners and interior b-maps. More explicitly,
\[
	Y \times_M Z = \ol{\set{(y,z) \in Y^\circ\times Z^\circ : f(y) = g(z)}} \subset Y\times Z
\]
admits a canonical structure of a manifold with generalized corners, with
respect to which it satisfies the following universal property: 
If $k : N \to Y$ and $l : N \to Z$ are interior b-maps such that $f\circ k =
g\circ l$, then there exists a unique interior b-map $h : N \to Y\times_M Z$ such
that $k = \pr_1\circ h$ and $l = \pr_2 \circ l$:
\[
\begin{tikzcd}[row sep=small, column sep=small]
	N \ar[dashed]{dr}{h} \ar[bend left]{drr}[above right]{l} \ar[bend right] {ddr}[below]{k}& & 
	\\ & Y\times_M Z \ar{r}[below]{\pr_2} \ar{d}{\pr_1} & Z \ar{d}{g}
	\\ & Y \ar{r}{f} & M.
\end{tikzcd}
\]
\label{T:fiber_products}
\end{thm}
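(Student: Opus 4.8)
Although this theorem is due to Joyce, here is the line of argument I would follow. The existence assertion is local on $Y\times Z$ near the set $\ol{\set{(y,z)\in Y^\circ\times Z^\circ : f(y)=g(z)}}$, so the first step is to reduce to the model case: by Proposition~\ref{P:nonsmooth_model} one may take $Y=X_P$, $Z=X_Q$, $M=X_L$ and express $f$ and $g$ in coordinates as in \eqref{E:local_b_map}. Writing the coordinates on $X_L$ as $(x'',y'')$ with the $x''_j$ valued in $\bbR_+$ and the $y''_k$ in $(0,\infty)$, the condition $f(u)=g(w)$ becomes a system of equations of two kinds: `monomial' equations $h_j\,x^{\mu_j}=\wt h_j\,(x')^{\wt\mu_j}$ coming from the $x''_j$, and `smooth' equations $g_k=\wt g_k$ coming from the $y''_k$, with all of $h_j,\wt h_j,g_k,\wt g_k$ smooth and strictly positive.

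The second step is to exploit b-transversality. Dualizing, the hypothesis says precisely that the b-differentials of the defining equations have rank $\dim(M)$ along the fibre-product locus; concretely, among the one-forms $d\log g_k-d\log\wt g_k$ and $d\log h_j+\mu_j{\cdot}\mathrm{dlog}\,x-d\log\wt h_j-\wt\mu_j{\cdot}\mathrm{dlog}\,x'$ exactly $\dim(M)$ are independent. On the interiors $Y^\circ\times Z^\circ$, where all coordinates are positive, this exhibits $Y^\circ\times_{M^\circ}Z^\circ$ as an embedded submanifold cut out transversally, of the expected dimension $\dim(Y)+\dim(Z)-\dim(M)$. Near a boundary point one eliminates the smooth equations by the implicit function theorem and normalizes the positive prefactors by a change of coordinates, reducing to the local analysis of a purely monomial (`binomial') locus.

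The third, and central, step is to identify that binomial locus $\set{x^{\mu_j}=(x')^{\wt\mu_j}}$ in $\bbR_+^n\times\bbR_+^{n'}$ (cut also by the toric relations of $P$ and $Q$) with a model space $X_R$. Here $R$ is obtained from the amalgamated pushout of $P$ and $Q$ over $L$ along the homomorphisms dual to $f_\natural$ and $g_\natural$ by passing to its integral, torsion-free saturation; dually, $R^\vee\cong P^\vee\times_{L^\vee}Q^\vee$. One must check that $R$ so constructed is finitely generated of dimension $\dim(P)+\dim(Q)-\dim(L)$ and has embedded boundary faces---and b-transversality is exactly the hypothesis that makes this dimension count come out and renders the construction clean. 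Granting this, $Y\times_M Z$ inherits near each of its points the structure of an open subset of such an $X_R$, and one checks that these local structures patch together independently of all choices.

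Finally, for the universal property: $Y^\circ\times_{M^\circ}Z^\circ$ is open and dense in $Y\times_M Z$ and is an ordinary transverse fibre product of manifolds. Given interior b-maps $k:N\to Y$ and $l:N\to Z$ with $f\circ k=g\circ l$, the pair $(k,l):N\to Y\times Z$ carries $N^\circ$ into $Y^\circ\times_{M^\circ}Z^\circ$ by Proposition~\ref{P:b-maps}, hence by density has image in $Y\times_M Z$; that $(k,l)$ is then an interior b-map into $Y\times_M Z$ is a local verification in the model $X_R$, and uniqueness of $h$ is immediate from the density of $N^\circ$. The main obstacle in this programme is the algebraic third step---proving the monomial fibre product is genuinely toric, of the expected dimension, with embedded faces---which is where the b-transversality hypothesis does all the work and is the technical heart of Joyce's proof.
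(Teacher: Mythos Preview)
The paper does not prove this theorem; it merely quotes it from \cite{joyce} (Theorem~4.27 there) and uses it as a black box in \S\ref{S:blow-up_pullback}. There is therefore no proof in the paper against which to compare your proposal.

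That said, your outline is broadly consonant with Joyce's actual argument: reduction to local models, separation into `monomial' and `smooth' equations, use of the implicit function theorem to eliminate the smooth equations, and identification of the remaining binomial locus with a model space $X_R$ for a suitable toric monoid obtained as (the saturation of) a pushout. Your identification of the third step as the technical heart is accurate. One point to be careful about: Joyce does not assume embedded boundary faces, and the fibre product $Y\times_M Z$ need not have embedded boundary faces even when $Y$, $Z$, and $M$ do (so your parenthetical ``and has embedded boundary faces'' in the third step is not quite right and not something one checks). Also, the universal property in Joyce's formulation is for the category of all (not necessarily interior) b-maps, though the interior version you sketch is what this paper actually uses.
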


\begin{prop}
A blow-down map $\beta : [M; \cR] \to M$ has the property that
\[
	\bd \beta_\ast : \bT_x [M;\cR] \stackrel \cong \to \bT_{\beta(x)} M
\]
is an isomorphism for all $x \in [M; \cR]$.
\label{P:b-etale}
\end{prop}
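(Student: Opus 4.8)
The plan is to reduce to the local model and invoke the explicit coordinate description of the blow-down map. Since $\bd\beta_\ast : \bT_x[M;\cR] \to \bT_{\beta(x)}M$ is a morphism of vector bundles whose fibers both have rank $\dim(M)$ (the dimension is preserved by the blow-down, as $\beta : [M;\cR]^\circ \to M^\circ$ is a diffeomorphism and $[M;\cR]$ is a manifold with generalized corners of the same dimension), it suffices to check that $\bd\beta_\ast$ is injective fiberwise, or equivalently, that it is an isomorphism at a single point of each chart. Working in a chart, we are reduced to the case $M = X_P$ for a sharp monoid $P$, a refinement $\cR \to P^\vee$, and $\beta$ locally given by one of the maps $X_{R^\vee} \to X_P$ of \eqref{E:refinement_map_on_models} for a maximal $R \in \cR$.

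First I would recall that a maximal $R \in \cR$ means $\dim(R) = \dim(P^\vee)$, and since the refinement morphism is injective (R\ref{I:refinement_inj}) with $R$ identified with its image in $L := (P^\vee)^\gp$, the inclusion $R \hookrightarrow P^\vee$ induces an isomorphism $R^\gp \cong (P^\vee)^\gp$ of lattices. Dualizing, the homomorphism $\psi_R^\vee : P \to R^\vee$ induces an isomorphism $(\psi_R^\vee)^\gp : P^\gp \stackrel{\cong}{\to} (R^\vee)^\gp$ on group completions. Now by Proposition~\ref{P:bT_local}, for any point $x' \in X_{R^\vee}$ we have $\bT_{x'} X_{R^\vee} \cong \Hom(R^\vee; \bbR) = \Hom((R^\vee)^\gp; \bbR)$, and similarly $\bT_{\beta(x')} X_P \cong \Hom(P; \bbR) = \Hom(P^\gp; \bbR)$. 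Under these identifications, $\bd\beta_\ast$ is precisely the map $\Hom((R^\vee)^\gp; \bbR) \to \Hom(P^\gp;\bbR)$ induced by $(\psi_R^\vee)^\gp$. Since $(\psi_R^\vee)^\gp$ is an isomorphism of lattices, the induced map on $\Hom(-;\bbR)$ is an isomorphism of vector spaces.

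To make the identification of $\bd\beta_\ast$ with the dual of $(\psi_R^\vee)^\gp$ fully explicit, I would use the coordinate form: choosing generators for $P^\sharp = P$ and for $R^\vee$, the map $\beta : X_{R^\vee} \to X_P$ has the form $(x'') \mapsto (x'')^\nu$ in coordinates (as recorded in the proof of Lemma~\ref{L:local_b-map_lift}), where $\nu$ is the matrix of $\beta_\natural^\vee = \psi_R^\vee$. By the formula \eqref{E:bT_local} for the differential, $\bd\beta_\ast$ acts on the generating vector fields by $x''_j \pa_{x''_j} \mapsto \sum_i \nu_{ij}\, x_i \pa_{x_i}$, i.e., it is the transpose $\nu^\vee$ acting on $\Hom(-;\bbR)$ — and $\nu$ represents an isomorphism on group completions, hence $\nu^\vee$ is invertible over $\bbR$. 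Finally, I would note that this is consistent across overlapping charts because the transition maps of the b-tangent bundle and the gluing diffeomorphisms \eqref{E:local_diffeos} are compatible by construction (Lemma~\ref{L:gluing_maps}), so the fiberwise isomorphism globalizes. The main obstacle, if any, is purely bookkeeping: making sure the identification $\bT_x[M;\cR] \cong \Hom((R^\vee)^\gp;\bbR)$ is the right one in a chart where $x$ may lie on a boundary face $X_{T^\perp} \subset X_{R^\vee}$ rather than in the interior — but this is handled uniformly since Proposition~\ref{P:bT_local} gives the identification $\bT_{x'}X_{R^\vee} \cong \Hom(R^\vee;\bbR)$ at \emph{every} point $x' \in X_{R^\vee}$, not just interior ones, and $\Hom(R^\vee;\bbR) = \Hom((R^\vee)^\gp;\bbR)$ regardless.
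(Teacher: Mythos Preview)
Your proof is correct and follows essentially the same approach as the paper: reduce to the local model $\beta : X_{R^\vee} \to X_P$, use Proposition~\ref{P:bT_local} to identify the b-tangent spaces with $\Hom((R^\vee)^\gp;\bbR)$ and $\Hom(P^\gp;\bbR)$, and observe that the induced linear map is an isomorphism because $R \hookrightarrow P^\vee$ is injective with $\dim(R) = \dim(P^\vee)$. Your additional coordinate computation and globalization remarks are more detailed than the paper's terse version, but the core argument is identical.
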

\begin{proof}
The property is local, so it suffices to consider the case $\beta : X_{R^\vee}
\to X_P$ for a maximal $R$ in a refinement of $P^\vee$.  By
Proposition~\ref{P:bT_local}, $\bT_x X_{R^\vee} \cong \Hom(R^\vee; \bbR) =
\Hom\big((R^\vee)^\gp; \bbR\big)$ and $\bT_{\beta(x)} X_P \cong \Hom(P; \bbR) =
\Hom(P^\gp; \bbR)$, and the linear map between them is generated by the
homomorphism $P^\gp \to (R^\vee)^\gp$ determined by duality from $R \to
P^\vee$. Since the latter is injective with $\dim(R) = \dim(P^\vee)$, the
former is an isomorphism.
\end{proof}

It follows that $\beta : [M; \cR] \to M$ is b-transverse to {\em any} interior
b-map $f : Y \to M$; in particular, the pull back
\[
	Y \times_M [M; \cR] \to Y 
\]
is a well-defined manifold for every map $f : Y \to M$.

On the other hand, fiber products exist in the category of monoidal complexes,
and the pull back of a refinement is a refinement \cite{KM}. 
In particular, given a refinement $\psi: \cR \to \cP_M$ and a map $f : Y \to
M$, we have a commutative diagram
\[
\begin{tikzcd}[row sep=small, column sep=small]
	\cP_Y \times_{\cP_M} \cR \ar{r} \ar{d} & \cR \ar{d}{\psi}
	\\ \cP_Y \ar{r}{f_\natural} & \cP_M
\end{tikzcd}
\]
of monoidal complexes, in which the vertical arrows are refinements.

\begin{thm}
Let $[M; \cR]$ be the blow-up of a manifold $M$ with respect to a refinement
$\cR \to \cP_M$, and let $f : Y \to M$ be an interior b-map. Then there is a canonical
diffeomorphism
\begin{equation}
	Y\times_M [M; \cR] \cong [Y; \cP_Y\times_{\cP_M} \cR]
	\label{E:fib_prod_diffeo}
\end{equation}
between the pull back of $[M; \cR]$ over $Y$ and the blow-up of $Y$ by the
refinement $\cP_Y \times_{\cP_M} \cR$. In other words, blow-ups pull back under
arbitrary interior b-maps.
\label{T:blow-up_pullback}
\end{thm}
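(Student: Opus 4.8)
The plan is to avoid constructing the diffeomorphism \eqref{E:fib_prod_diffeo} by hand (and to avoid any local coordinate computation with the model blow-ups), and instead to identify both sides of \eqref{E:fib_prod_diffeo} as solutions of the same universal problem. Write $\cR' = \cP_Y \times_{\cP_M} \cR$; by \cite{KM} the projection $\psi' : \cR' \to \cP_Y$ is again a refinement, so the blow-up $[Y; \cR']$ is defined, with blow-down $\beta' : [Y; \cR'] \to Y$ satisfying $\cP_{[Y; \cR']} \cong \cR'$ and $\beta'_\natural \cong \psi'$ by Theorem~\ref{T:main_thm}. On the other side, Proposition~\ref{P:b-etale} shows that $\beta : [M; \cR] \to M$ is b-transverse to $f : Y \to M$, so by Theorem~\ref{T:fiber_products} the fiber product $Y \times_M [M; \cR]$ exists as a manifold with generalized corners and is characterized by a universal property in the category of interior b-maps. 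I would therefore prove \eqref{E:fib_prod_diffeo} by equipping $[Y; \cR']$ with two interior b-maps, to $Y$ and to $[M; \cR]$, and checking that this data solves that universal property; uniqueness of fiber products then gives the canonical diffeomorphism.

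First I would produce the two projections. The map to $Y$ is the blow-down $\beta'$. For the map to $[M; \cR]$, I would apply the blow-up universal property of $[M; \cR]$ to the interior b-map $f \circ \beta' : [Y; \cR'] \to M$: by functoriality (Theorem~\ref{T:P_M}) its induced morphism $(f \circ \beta')_\natural \cong f_\natural \circ \psi'$ equals $\psi$ precomposed with the other projection $\cR' \to \cR$ — this is precisely the defining property of $\cR'$ as a fiber product of monoidal complexes — and in particular it factors through $\psi$. Theorem~\ref{T:main_thm} then yields a unique interior b-map $q : [Y; \cR'] \to [M; \cR]$ with $\beta \circ q = f \circ \beta'$, so $(\beta', q)$ forms a commutative square over $f$ and $\beta$.

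Next I would verify the universal property. Given a manifold $N$ with interior b-maps $k : N \to Y$ and $l : N \to [M; \cR]$ such that $f \circ k = \beta \circ l$, passing to monoidal complexes and using $\cP_{[M; \cR]} \cong \cR$, $\beta_\natural \cong \psi$, gives $f_\natural \circ k_\natural = \psi \circ l_\natural$; by the universal property of the monoidal-complex fiber product $\cR' = \cP_Y \times_{\cP_M} \cR$ \cite{KM}, $k_\natural$ factors through $\psi'$, so Theorem~\ref{T:main_thm} again lifts $k$ to a unique interior b-map $h : N \to [Y; \cR']$ with $\beta' \circ h = k$. It then remains to see that $q \circ h = l$: the two interior b-maps $q \circ h$ and $l$ have the same composite with $\beta$, namely $f \circ k$, and since $\beta$ restricts to a diffeomorphism on interiors while both $q \circ h$ and $l$ carry $N^\circ$ into $[M; \cR]^\circ$, they agree on the dense set $N^\circ$, hence everywhere by continuity (using that $[M; \cR]$ is Hausdorff). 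Uniqueness of $h$ is immediate from the uniqueness clause of Theorem~\ref{T:main_thm}, since any competitor $h'$ already satisfies $\beta' \circ h' = k$.

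The diagram chases above are routine; the step I expect to require the most care is confirming that the various universal-property-provided maps fit together coherently — in particular that the factorization of $(f \circ \beta')_\natural$ through $\psi$ used to define $q$ is induced by the fiber-product structure of $\cR'$, and that the resulting $q$ genuinely serves as second projection, i.e.\ that $q \circ h = l$ on all of $N$ and not merely over the open dense interior. The last point rests on an interior b-map into a Hausdorff manifold being determined by its restriction to the interior, which follows from Proposition~\ref{P:b-maps} and density of the interior. A smaller point to double-check is that b-transversality of $\beta$ to every interior b-map (Proposition~\ref{P:b-etale}) indeed places us within the hypotheses of Theorem~\ref{T:fiber_products}, so that $Y \times_M [M; \cR]$ is a genuine manifold whose universal property we may invoke to conclude.
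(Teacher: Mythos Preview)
Your proposal is correct and follows essentially the same strategy as the paper: show that $[Y;\cR']$ satisfies the universal property of the fiber product $Y\times_M [M;\cR]$, then invoke uniqueness. In fact your version is more complete than the paper's own proof, which is quite terse --- it does not explicitly construct the second projection $q:[Y;\cR']\to[M;\cR]$, nor verify that $q\circ h=l$; you fill in exactly these details via the blow-up universal property applied to $f\circ\beta'$ and the density-of-interiors argument.
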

\begin{proof}
Denote the refinement $\cP_Y\times_{\cP_M} \cR$ by $\cR' \to \cP_Y$.  Suppose
$N$ is a manifold with maps to to $Y$ and $[M; \cR]$ forming a commutative
square with $M$, thus inducing a commutative square of complexes:
\[
\begin{tikzcd}[row sep=small, column sep=small]
	N \ar{r} \ar{d} & {[M; \cR]} \ar{d}
	\\ Y \ar{r} & M,
\end{tikzcd}
\qquad
\begin{tikzcd}[row sep=small, column sep=small]
	\cP_N \ar{r} \ar{d} & \cR \ar{d}
	\\ \cP_Y \ar{r} & \cP_M.
\end{tikzcd}
\]
Then $\cP_N$ factors uniquely through $\cR'$ by the universal property of the
fiber product of complexes, and from the universal property of blow-up it
follows that $N$ factors through a unique map to $[Y; \cR']$. 
In other words, the manifold $[Y; \cR']$ satisfies the same universal property
as the fiber product $Y \times_M [M; \cR]$, which is unique up to canonical
diffeomorphism.
\end{proof}

\begin{rmk}
In the language of algebraic geometry, blow-down maps are {\em stable under base change}.
\end{rmk}

\subsection{Commentary} \label{S:commentary}
The assumption that boundary faces of manifolds are embedded is necessary if
one wants to work with monoidal complexes, as we have done. 
Indeed, the embeddedness assumption was used in
Proposition~\ref{P:pre_mon_cmplx} to obtain the triviality of the bundles $\bM
F$; without this assumption it is straightforward to construct examples where
the $\bM F$ are not trivial.
Moreover, even if the $\bM F$ are trivial, so that one still obtains a complex
$\cP_M$, it may not be possible to realize a refinement by blow-up, i.e., the
statement that $\cP_{[M; \cR]} \cong \cR$, which depends on the embeddedness
assumption, may fail.

To illustrate this last point, consider the {\em teardrop} (c.f. \cite{joyce},
Example~2.8)
\[
	M = \set{(x,y) \in \bbR^2 : 0 \leq x,\ y^2 \leq x^2 - x^4}.
\]
This is a 2-dimensional manifold with (ordinary) corners having a single
codimension 2 boundary face at the origin and a single, self-intersecting
boundary hypersurface. Its monoidal complex is
\begin{equation}
\cP_M : 
\begin{tikzcd}[column sep=small]
	\set 0 \ar[rightharpoonup]{r} & \bbN 
	 \ar[rightharpoonup,bend left]{r} \ar[rightharpoonup,bend right]{r} & \bbN^2
\end{tikzcd}
	\label{E:P_teardrop}
\end{equation}
where the single object $\bbN$ is identified with both of the faces of
$\bbN^2$. 
By contrast, the complex $\cP_{\bbN^2}$ is
\[
	\cP_{\bbN^2} :
\begin{tikzcd}[column sep=small, row sep=tiny]
	& \bbN \ar[rightharpoonup]{dr} & \\
	\set 0 \ar[rightharpoonup]{ur} \ar[rightharpoonup]{dr} & & \bbN^2 \\
	& \bbN \ar[rightharpoonup]{ur} &
\end{tikzcd}
\]
It is easy to see that there are no injective morphisms $\cP_M \to
\cP_{\bbN^2}$, while there is an obvious morphism $\cP_{\bbN^2} \to \cP_M$
given by the identity on each $\bbN^n$.
This latter morphism is a refinement, and the construction of $[M;
\cP_{\bbN^2}]$ given above goes through since it is completely local.  However,
since the only morphisms in the refinement are identities, we just recover $M$
again, i.e., $[M; \cP_{\bbN^2}] \cong M$, but $\cP_M \not\cong \cP_{\bbN^2}$.

To work with such spaces then, it is necessary to give up the complex $\cP_M$
in favor of a more complicated object. 

A {\bf monoidal space},
$(Y, \cM_Y)$, as defined by Kato \cite{kato} (see also \cite{GM}), is a
topological space $Y$ equipped with a sheaf $\cM_Y$ of sharp monoids, and a
morphism $f: (X,\cM_X) \to (Y,\cM_Y)$ is a continuous map $f : X \to Y$ with a
morphism $f^\ast\cM_Y \to \cM_X$ of sheaves.
A manifold $M$ with generalized corners 
admits the structure of a monoidal space, whose underlying topological
space is $M$, and whose sheaf of monoids is the sharpening $\cM_M =
\B_M^\sharp$ of the sheaf of b-functions.
%
This sheaf has the property that if $O$ meets a unique face $F$ of maximal
codimension, then $\B^\sharp_M(O) = W(F)$. 

In fact, the monoidal complex of a manifold with embedded boundary faces is
essentially equivalent to Kato's notion of the `fan' associated to certain
logarithmic schemes \cite{kato}. A {\bf fan} is a monoidal space locally
isomorphic to the `affine' model space $(\Spec(P),\cM_P)$. Here $\Spec(P) =
\set{F : F \leq P}$ is the set of faces (equivalently, prime ideals) of a
monoid $P$ equipped with the (non-Hausdorff) Zariski topology generated by open
sets $U_p = \set{F : p \in F}$ for $p \in P$, and $\cM_P$ is the sheaf of sharp
monoids whose stalk at $F \in \Spec(P)$ is the monoid $\cM_{P,F} = P/F$. 
(The concept of a fan is summed up succinctly by the analogy fan : sharp monoid
:: scheme : ring.) 
In contrast to a general monoidal space, a fan consists of a small (typically
finite) number of points; indeed, there is a bijection between the affine open
sets of a fan and its points (c.f.  Lemma~4.6, \cite{abram}). 
Certain sufficiently nice logarithmic schemes $(X,\cM_X,\cO_X)$ (analogous to
our manifolds with embedded boundary faces) are associated to a canonical fan
$F$ via a morphism $(X,\cM_X^\sharp) \to F$ which essentially collapses various
strata (analogous to our interiors of boundary faces) down to points.
In this analogy, the fan associated to a manifold with embedded boundary faces
has a single point for each stratum of \eqref{E:boundary_stratn} and is
equipped with a non-Hausdorff topology (encoding the inclusion relations
between boundary faces) and a sheaf obtained from the dual sheaf
$(\B^\sharp_M)^\vee$; in particular its monoids are dual to those in the
complex $\cP_M$.

That general manifolds (without embedded boundary faces) do not admit monoidal
complexes can be compared to the fact that not all logarithmic schemes admit
fans \cite{abram}. 
To define blow-up for manifolds in general, it should still be possible to
explicitly patch together the local constructions in \S\ref{S:blow-up_models}
for a suitable notion of refinement of the monoidal space $(M,
\B_M^\sharp)$. Indeed, this is the approach taken by \cite{GM}, though their
approach is rather abstract. Alternatively, it may be possible to work with
some kind of intermediate object which is simpler than $(M,\B_M^\sharp)$ but
more complicated than $\cP_M$ (compare the notion of an `Artin fan'
\cite{abram1, abram} in logarithmic algebraic geometry). We leave this for a
future work.

\bibliography{gbmwgc}
\bibliographystyle{amsplain}

\end{document}